\theoremstyle{plain}
{
  \newtheorem{theo}{Theorem}[section]
  \newtheorem{lem}[theo]{Lemma}
  \newtheorem{cor}[theo]{Corollary}
  \newtheorem{prop}[theo]{Proposition}
  \newtheorem*{CEP}{The Capelli Eigenvalue Problem}
   \newtheorem*{rCEP}{The Refined Capelli Eigenvalue Problem}
\newtheorem*{Ack}{Acknowledgements}  

\newtheorem*{Not}{Notation and Conventions}  

}
\newenvironment{Acknowledgement}{\begin{Ack}\rmfamily\upshape}{\end{Ack}}
\newenvironment{Notation}{\begin{Not}\rmfamily\upshape}{\end{Not}}
\theoremstyle{definition}
{
   \newtheorem{defn}[theo]{Definition}
   \newtheorem{eg}[theo]{Example}
}
\theoremstyle{remark}
{
  \newtheorem{remark}[theo]{Remark}
}
\newcommand{\abs}[1]{\vert #1 \vert}  
\newcommand{\C}{\mathbb{C}} 
\newcommand{\gl}{\mathfrak{\lowercase{gl}}} 
\newcommand{\mat}[1]{\left[\begin{matrix} #1 \end{matrix}\right]}
\newcommand{\p}{\mathscr{P}}
\newcommand{\s}{\mathscr{S}}
\newcommand{\pd}{\mathscr{PD}}
\newcommand{\h}{\mathfrak{\lowercase{h}}} 
\newcommand{\br}{\mathfrak{\lowercase{b}}} 
\newcommand{\g}{\mathfrak{\lowercase{g}}}
\newcommand{\Z}{\mathbb{Z}}
\newcommand{\pr}[1]{\left(#1\right)}
\newcommand{\lr}[1]{\langle#1\rangle}
\newcommand{\SPo}{P_{\mu,1}}
\newcommand{\weird}{\Gamma}
\newcommand\definecommand[2]{
  \expandafter\def\csname#1l\endcsname##1{\underline{#2_{##1}}}
}
\begin{document}
 
\title[The Refined Solution to the CEP]{The Refined Solution to the Capelli Eigenvalue Problem for $\gl(m|n)\oplus\gl(m|n)$ and $\gl(m|2n)$}
\author{Mengyuan Cao}\email{mcao027@uottawa.ca}
\author{Monica Nevins}\email{mnevins@uottawa.ca}
\author{Hadi Salmasian}\email{hsalmasi@uottawa.ca}
\address{Department of Mathematics and Statistics, University of Ottawa, STEM Complex,
150 Louis-Pasteur Pvt,
Ottawa, Ontario
Canada K1N 6N5 Canada}
\date{\today}                                           

\maketitle
\begin{abstract}
{Let $\g$ be either the Lie superalgebra  $\gl(V)\oplus\gl(V)$
  where $V:=\C^{m|n}$ 
or the Lie superalgebra $\gl(V)$
where $V:=\C^{m|2n}$. Furthermore, let $W$ be the $\g$-module defined by  
 $W:=V\otimes V^*$ in the former case and  $W:=\s^2(V)$ in the latter case.
Associated to $(\g,W)$ there exists a distinguished basis 
of \emph{Capelli operators} 
$\left\{D^\lambda\right\}_{\lambda\in\Omega}$, naturally indexed by a set of hook partitions $\Omega$, for the  subalgebra of 
$\g$-invariants in
the superalgebra $\pd(W)$  
of superdifferential operators on $W$.

Let $\mathfrak b$ be a Borel subalgebra of $\g$. 
We compute eigenvalues of the $D^\lambda$ on the irreducible $\g$-submodules of $\p(W)$ and obtain them explicitly as the  evaluation of the interpolation super Jack polynomials of Sergeev--Veselov at suitable affine functions of the $\mathfrak b$-highest weight.  While the former case is straightforward, the latter is significantly more complex.  This generalizes a result by Sahi, Salmasian and Serganova for these cases, where such formulas were given for a fixed choice of Borel subalgebra.}

\end{abstract}

\section{Introduction}
Throughout this paper the base field will be $\C$. 
Let $\g$ be a Lie superalgebra and let $W$ be a $\g$-module. Then the superalgebra $\p(W)$ of superpolynomials on $W$ has a canonical $\g$-module structure, and  $\p(W)\cong\s(W^*)$ as $\g $-modules. 
Suppose that $\p(W)$ has a multiplicity-free decomposition into irreducible $\g$-modules, that is   
\begin{equation}\label{cap_3}\p(W)\cong \bigoplus_{\lambda\in \Omega}W_{\lambda},\end{equation} where the $W_\lambda$'s are pairwise non-isomorphic irreducible $\g$-modules 
and $\Omega$ is an index set. In addition, assume that 
the $W_\lambda$ are modules of type M, that is,  
$\mathrm{End}_\g(W_\lambda)\cong \C$. 

Let $\mathscr D(W)$ denote the algebra of constant-coefficient superdifferential operators on $W$. Then there is  a canonical isomorphism
$\mathscr D(W)\cong \s(W)$.  Hence, as explained in Appendix~\ref{appxA}, we have a $\mathfrak g$-module decomposition 
 \[
\mathscr D(W)\cong \bigoplus_{\lambda\in\Omega}W_\lambda^*.
\]
Let $\mathscr{PD}(W)$ denote the algebra of superdifferential operators on $W$ with superpolynomial coefficients. We have an isomorphism
of $\g $-modules \[
\pd(W)\cong \p(W)\otimes \mathscr D(W).
\]
Taking $\g$-invariant elements of both sides, it follows that  we have vector-space isomorphisms
\begin{equation}\label{eq:pdg_mul_free_decom}\pd(W)^\g\cong 
\bigoplus_{\lambda,\mu\in\Omega} (W_\lambda\otimes W_\mu^*)^\g
\cong\bigoplus_{\lambda\in\Omega} \mathbb{C}id_{W_{\lambda}}.
\end{equation}


\begin{defn}\label{defn:capelli_operator}
The Capelli operator $D^\lambda$ for $\lambda\in \Omega$ is the $\g$-invariant differential operator in $\pd(W)^\g$ that corresponds to $id_{W_\lambda}\in\mathrm{Hom}_\g(W_\lambda,W_\lambda)$ via the isomorphism \eqref{eq:pdg_mul_free_decom}. 
\end{defn}
Note that $id_{W_\lambda}$ corresponds to the tensor $\sum_{i=1}^{d_\lambda}w_i^*\otimes w_i$, where 
$d_\lambda:=\dim W_\lambda$ and the sets of vectors
$\{w_i\}_{i=1}^{d_\lambda}$ 
and $\{w_i^*\}_{i=1}^{d_\lambda}$
are bases of $W_\lambda^*$ and $W_\lambda$ that are dual to each other.
The above choices give a well-defined normalization of the operator $D^\lambda$. Indeed as we explain in Appendix~\ref{appxA}, if $W_\lambda\subseteq \p^d(W)$ then
$D^\lambda$ acts on $W_\lambda$ by the scalar $d!$.

Clearly the set $\{D^\lambda\mid\lambda\in \Omega\}$ forms a basis for $\pd(W)^\g.$
By Schur's Lemma, the operators $D^\mu$ for $\mu\in\Omega$ act on the  irreducible components $W_\lambda$ of $\p(W)$ by scalars $c_\mu(\lambda)$. Thus we have the  following natural problem.

\begin{CEP} Find the eigenvalue $c_\mu(\lambda)$ for each $\lambda,\mu\in\Omega$. \end{CEP}

The Capelli Eigenvalue Problem (henceforth CEP) goes back to the work of Kostant and Sahi \cite{kostant_sahi_1991,kostant_sahi_1993}. It has been studied extensively and spawned the theory of interpolation Jack polynomials \cite{sahi_1994,okounkov_olshanski_1997}.
If $\g $ is the complexification of $\mathfrak {gl}_n(\mathbb F)$ where $\mathbb F=\mathbb R,\C,\mathbb H$ and $W$ is the complexification of the space of hermitian $n\times n$ matrices with entries in $\mathbb F$, then it follows from the work of Knop and Sahi~\cite{Knop1996DifferenceEA} that 
the
eigenvalue $c_\mu(\lambda)$ is given by a polynomial function of the form $P_\mu(\lambda+\rho)$, where $\rho$ is the usual shift by half the sum of positive roots, and $P_\mu$ is a symmetric polynomial whose top-degree homogeneous component yields a distinguished class of special functions that occur in spherical analysis on Riemannian symmetric spaces, known as  Jack polynomials (at parameter values $\frac{1}{2},1,2$ respectively). Interpolation Jack polynomials also occur as eigenfunctions of the Calogero-Moser-Sutherland operator, which describes the hamiltonian of the quantum $n$-body problem in one dimension~\cite{Sergeev2005GeneralisedDD}.

In this paper we are interested in a refinement of the CEP when $(\g,W)$ is either the pair $(\gl(V)\oplus\gl(V),V\otimes V^*)$ with $V:=\C^{m|n}$ or the pair $(\gl(V),\s^2(V))$ with $V:=\C^{m|2n}$. The CEP for these pairs (and several others that are obtained by the TKK construction) was studied 
in~\cite{sahi_salmasian_serganova_2019}. 
As explained in \emph{loc. cit.}, these two cases  are related to the diagonal symmetric pair  $(\gl(V)\oplus\gl(V),\gl(V))$  and the  symmetric pair $(\gl(V),\mathfrak{osp}(V))$, respectively.  
From this viewpoint, the present paper serves as a starting point to  extend the results of~\cite{sahi_salmasian_serganova_2019} to arbitrary Borel subalgebras. The complex picture of families of Borel subalgebras that arises for the pair $(\gl(V),\mathfrak{osp}(V))$
suggests that a complete and uniform general solution to the refined CEF requires new ideas.

In~\cite{sahi_salmasian_serganova_2019} the authors first parametrize the representations of $\g$ by hook partitions. Then they fix a Borel subalgebra $\br$ (coming from the standard Borel subalgebra or its opposite; see \cite[Table 4]{sahi_salmasian_serganova_2019}), and write down a formula for the $\br$-highest
weights of irreducible components of $\p(V)$ in terms of the hook partition. Finally they show that a formula for the eigenvalue $c_\mu(\lambda)$ can be computed as an interpolation super Jack polynomial evaluated at an affine function of 
the $\br$-highest weight.  

The fixed choice of the Borel subalgebra  essentially means that the formula for
the eigenvalues of a Capelli operator is really dependent on the parametrization
of modules by partitions. 
Our goal in this paper is to remove this constraint. That is we compute formulas for $c_\mu(\lambda)$ which are polynomial in the $\br$-highest weights of the $W_\lambda$, for \emph{arbitrary} Borel subalgebras $\br$. 
In other words, our goal in this paper is to study  the following variation of the CEP for the aforementioned pairs $(\g,W)$.

\begin{rCEP}
For any Borel subalgebra $\br$, find the eigenvalue $c_\mu(\lambda)$ as a polynomial function in the $\br$-highest weight of $W_\lambda$. 
\end{rCEP}

Since the Borel subalgebras $\gl(m|n)$ are not conjugate by the inner automorphism group, the solution to the CEP does \emph{not} carry over to the refined form. Indeed, the work of Section~\ref{sec:cpeglm2n} shows that the choice of affine function varies significantly with the type of Borel subalgebra.

\medskip

Here is an outline of the paper. In Section~\ref{sec:Interpolation} we recall the definitions and properties of the interpolation super Jack polynomials of Sergeev and Veselov~\cite{Sergeev2005GeneralisedDD}. In Section~\ref{sec:CEPdiag} we show that for the diagonal symmetric pair 
$(\gl(m|n)\oplus\gl(m|n), \C^{m|n}\otimes (\C^{m|n})^*)$ the solution of the refined CEP is obtained in a relatively straightforward way from  the results of \cite{sahi_salmasian_serganova_2019}; this result is stated in Theorem~\ref{T:1}.  

We begin the study of the pair $(\gl(m|2n), \mathcal{S}^2(\C^{m|2n}))$ in Section~\ref{section:bsam2n}, where we establish formulae for $\br$-highest weights, for any decreasing Borel subalgebra $\br$, and derive necessary conditions for a solution to the refined CEP.  
In Section~\ref{sec:cpeglm2n}, we solve the refined CEP in a series of results of increasing complexity: for \emph{very even} Borel subalgebras in Section~\ref{SS:veryeven}; for \emph{generic} highest weights in Section~\ref{SS:generic}; for \emph{relatively even} Borel subalgebras in Section~\ref{SS:relativelyeven} and finally in all cases in Section~\ref{ss:full}.  
In Appendix~\ref{appxA}, we carefully prove the well-definedness of the CEP and establish the normalization of the Capelli operators.

\begin{Acknowledgement}
Part of this work appeared in the first author's PhD thesis \cite{MengyuanPhD}.  The second author's research is  partially funded by NSERC Discovery Grant RGPIN-2020-05020.  The third author's research is partially funded by NSERC Discovery Grant RGPIN-2018-04044.
\end{Acknowledgement}

\begin{Notation} 
Since all Cartan subalgebras of $\gl(V)$ are conjugate by inner automorphisms, for the refined CEP it suffices to consider all Borel subalgebras that contain a fixed Cartan subalgebra.
Throughout this paper $\h_{m|n}\cong \C^{m|n}$ denotes the standard (\emph{i.e.}, diagonal) Cartan subalgebra of $\gl(m|n)$; we denote the standard basis of $\h_{m|n}$, or equivalently of $\C^{m|n}$, indiscriminately by $\{e_i\mid 1\leq i\leq m+n\}$. We also denote the  dual basis  for $\h_{m|n}^*$ by  $\{\epsilon_i, \delta_j\mid 1\leq i \leq m, 1\leq j \leq n\}$. Henceforth $(\cdot,\cdot)$ is the nondegenerate supersymmetric form on $\h_{m|n}^*$ that is diagonal with respect to this basis and satisfies $(\epsilon_i,\epsilon_i)=-(\delta_j,\delta_j)=1$ for all $1\leq i \leq m, 1\leq j \leq n$.
Let $\mathfrak{b}_{m|n}$ denote the standard upper triangular Borel subalgebra of $\gl(m|n)$ and $\mathfrak{b}^{\mathrm{op}}_{m|n}$ its opposite.  The set of Borel subalgebras has several orbits under the Weyl group.  A representative of each orbit (called an \emph{increasing} Borel subalgebra) may be obtained from $\br_{m|n}$ by performing a sequence of simple odd reflections; such a Borel subalgebra again contains $\h_{m|n}$.  Alternatively, applying simple odd reflections to $\mathfrak{b}^{\mathrm{op}}_{m|n}$ produces another set of representatives, whose elements are the \emph{decreasing} Borel subalgebras.

If $\lambda = (\lambda_1,\lambda_2,\ldots)$ is an integer partition, then we denote by $\lambda' = (\lambda_1', \lambda_2', \ldots)$ the transpose partition $\lambda_i' = |\{j\mid \lambda_j\geq i\}|$.
Let $\mathscr{H}(m|n)$ denote the set of $m|n$ hook partitions, \emph{i.e.}, integer partitions $(\lambda_1,\lambda_2,\ldots)$ satisfying $\lambda_{m+1}\leq n$. Doubling the elements of such a partition yields an element of $\mathscr{H}_2(m|2n)$, which is the subset of $m|2n$ hook partitions whose parts are all even.
 Each $\lambda \in \mathscr{H}(m|n)$  corresponds to a unique irreducible finite-dimensional representation of $\gl(m|n)$, denoted $V_{\lambda}$, whose highest weight with respect to the standard Borel subalgebra $\br_{m|n}$ is given by $\sum_i \lambda_i\epsilon_i+\sum_j\max\{\lambda'_{j}-m, 0\}\delta_j$.  Note that its negative is then the highest weight  of $V_\lambda^*$ with respect to $\br_{m|n}^{\mathrm{op}}$.

\end{Notation}

\section{Interpolation super Jack Polynomials}

\label{sec:Interpolation}
Let $m,n\in\mathbb{Z}_{\ge0}$ and let $R_{m|n}:=\mathbb{C}\left[x_1,\ldots,x_m, y_1\ldots,y_n\right]$ be the ring of polynomials in $m+n$ indeterminates with coefficients in $\mathbb{C}$. Let $\theta\in\C\setminus \mathbb Q^{\leq 0}$.  We say a polynomial $f\in R_{m|n}$ is \emph{separately symmetric} if $f$ is symmetric in $\{x_i\}_{1\le i\le m}$ and in $\{y_j\}_{1\le j\le n}$ separately.  

\begin{defn}We denote  by $\Lambda_{m,n,\theta}$ the subalgebra of separately symmetric polynomials $f$ such that 
\begin{equation}\label{eq:mosys}
f\pr{\ldots,x_i+1/2,\ldots, \ldots,y_j-1/2,\ldots}=f\pr{\ldots,x_i-1/2,\ldots, \ldots,y_j+1/2,\ldots}
\end{equation}on every hyperplane $x_i+\theta y_j=0$ for all $1\le i\le m$ and $1\le j\le n.$ We call the symmetry property defined in Equation \eqref{eq:mosys} the \emph{monoidal symmetry} of $f$.
\end{defn}

\begin{defn}Let $u,v\in\C^{m+n}$. We say $u$ is equivalent to $v$ if $f(u)=f(v)$ for all $f\in\Lambda_{m,n,\theta}$, in which case we write $u\sim_E v$.   We say that $u$ and $v$ are \emph{monoidally equivalent} if they are related by a sequence of separate symmetries and monoidal symmetries, and write $u\sim v$ in this case.  
\end{defn}

Evidently if $u\sim v$ then $u\sim_E v$.  As we discuss in more detail (and use) in Section~\ref{ss:full}, these two notions of equivalence differ when there exist elements in $\C^{m|n}$ whose orbits under $\sim$ are infinite.

\begin{lem}\label{lem:equforf}
Let $u=(x_1,\ldots,x_m,y_1,\ldots,y_n)\in\C^{m+n}$ and suppose $1\leq i\leq m$, $1\leq j \leq n$.  Then we have
\[
u\sim \begin{cases}
u-e_i+e_{m+j} & \text{whenever $x_i+\theta y_j = \frac12(1-\theta)$, and}\\
u+e_i-e_{m+j} & \text{whenever $x_i+\theta y_j = -\frac12(1-\theta)$.}
\end{cases}
\]
\end{lem}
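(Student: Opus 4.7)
The plan is to produce each claimed equivalence by a single application of monoidal symmetry at an appropriately chosen midpoint. Recall that, as a relation on tuples, the monoidal symmetry at the hyperplane $x_i+\theta y_j=0$ relates two tuples that differ by $\pm(e_i-e_{m+j})$ precisely when their midpoint lies on this hyperplane in slots $i$ and $m+j$. So in each case it suffices to check that the midpoint of $u$ and the proposed tuple satisfies $x_i+\theta y_j=0$.

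For the first case, let $u':=u-e_i+e_{m+j}$; the midpoint of $u$ and $u'$ has coordinate $x_i-\tfrac12$ in slot $i$ and $y_j+\tfrac12$ in slot $m+j$. Using the hypothesis $x_i+\theta y_j=\tfrac12(1-\theta)$,
\[
(x_i-\tfrac12)+\theta(y_j+\tfrac12)=(x_i+\theta y_j)+\tfrac{\theta-1}{2}=0,
\]
so the midpoint lies on the monoidal hyperplane and one monoidal move yields $u\sim u-e_i+e_{m+j}$. For the second case, let $u'':=u+e_i-e_{m+j}$; the midpoint has coordinates $x_i+\tfrac12$ and $y_j-\tfrac12$ in the relevant slots, and using $x_i+\theta y_j=-\tfrac12(1-\theta)$,
\[
(x_i+\tfrac12)+\theta(y_j-\tfrac12)=(x_i+\theta y_j)+\tfrac{1-\theta}{2}=0,
\]
so again one monoidal step gives $u\sim u+e_i-e_{m+j}$.

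There is no real obstacle here: the substance of the lemma is the observation that the resonance values $\pm\tfrac12(1-\theta)$ of $x_i+\theta y_j$ are exactly those for which a single monoidal symmetry step produces an \emph{integer} translate by $\pm(e_i-e_{m+j})$, rather than the half-integer shift that appears in the raw definition of monoidal symmetry. This reformulation is the convenient form of monoidal symmetry to be used in the subsequent analysis of $\sim$-orbits in Section~\ref{ss:full}.
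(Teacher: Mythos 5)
Your proof is correct and is essentially the paper's own argument: both identify the midpoint of $u$ and $u\mp e_i\pm e_{m+j}$ as the point $v$ with $x_i'+\theta y_j'=0$ at which the monoidal symmetry $v+\frac12e_i-\frac12e_{m+j}\sim v-\frac12e_i+\frac12e_{m+j}$ applies, and then translate that condition into the stated resonance values $\pm\frac12(1-\theta)$ of $x_i+\theta y_j$. No differences worth noting.
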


\begin{proof}
For any $v=(x_1', \cdots, x_m',y_1', \cdots, y_n')\in \C^{m+n}$, we have  $v + \frac12 e_i-\frac12 e_{m+j}\sim v-\frac12e_i+\frac12 e_{m+j}$ whenever $x_i'+\theta y_j'=0$.  If $u=v+\frac12 e_i-\frac12 e_{m+j}$ then $x_i'+\theta y_j'=0$ if and only if $x_i+\theta y_j = (x_i'+\frac12)+\theta(y_j'-\frac12)=\frac12(1-\theta)$, yielding the first equivalence.  Alternately, if $u= v-\frac12 e_i+\frac12 e_{m+j}$, then $x_i'+\theta y_j'=0$ if and only if $x_i+\theta y_j = (x_i'-\frac12)+\theta(y_j'+\frac12) = -\frac12(1-\theta)$, and the second equivalence holds.
\end{proof} 
We need one extra definition before we can define the  interpolation  super Jack polynomials. Given an $(m,n)$-hook partition $\lambda\in\mathscr{H}(m,n)$, the \emph{twisted Frobenius coordinates} $\pr{p(\lambda),q(\lambda)}\coloneqq\pr{p_1(\lambda),\ldots,p_m(\lambda),q_1(\lambda),\ldots,q_n(\lambda)}$ of $\lambda$ are defined by
\begin{equation}\label{eq:tw_F_c}\begin{split}p_i(\lambda)\coloneqq\lambda_i-\theta\pr{i-\frac12}-\frac12\pr{n-\theta m}\mbox{, and}\\q_j(\lambda)\coloneqq\lr{\lambda_j'-m}-\theta^{-1}\pr{j-\frac12}+\frac12\pr{\theta^{-1}n+m},\end{split}\end{equation}where $1\le i\le m, 1\le j\le n$ and $\lr{x}\coloneqq\max\{0,x\}$ for all $x\in\mathbb{R}$.  

\begin{defn}(\cite[Section 6]{Sergeev2005GeneralisedDD})
\label{defn:InterSuperJackPoly}
Let $\lambda\in\mathscr{H}(m,n)$ and let $\theta\in \C\setminus \mathbb Q^{\leq 0}$.  
The interpolation super Jack polynomial 
$P_{\lambda,\theta} $ is the  polynomial in 
$R_{m|n}$ that is uniquely  determined by the following properties:
\begin{enumerate}[label=(\roman*)]
\item${P}_{\lambda,\theta}\in\Lambda_{m,n,\theta}$;
\item deg$({P}_{\lambda,\theta})\le \abs{\lambda}$ where the degree of ${P}_{\lambda,\theta}$ means the total degree in both $x$ and $y$;
\item ${P}_{\lambda,\theta}(p(\lambda),q(\lambda))=d!$ where $d:=|\lambda|$;

\item ${P}_{\lambda,\theta}(p(\mu),q(\mu))=0$ for all $(m,n)$-hook partitions $\mu$ such that $\abs{\mu}\le\abs{\lambda}$ and $\mu\neq\lambda.$
\end{enumerate}
\end{defn}

\begin{remark}
More precisely, the polynomials $SP^*_{\lambda}(x,y,\theta)$ introduced in~\cite{Sergeev2005GeneralisedDD} differ from the $P_{\lambda,\theta}$ by a normalization:  for each $\lambda$, the value of ${SP}^*_{\lambda}(p(\lambda),q(\lambda),\theta)$ is instead given by a hook-type product. 
 \end{remark}

\begin{remark}\label{rmk:ISJP} 
The set $\{{P}_{\lambda,\theta}\}_{\lambda\in\mathscr{H}(m,n)}$  forms a basis for $\Lambda_{m,n,\theta}$.
Indeed one can prove that there exists a polynomial $\mathbf{P}_{\lambda}$ in $m+n$ variables with coefficients in the field of rational functions in a parameter $\bar\theta$ such that the specialization of $\mathbf{P}_\lambda$ at $\bar\theta:=\theta$ yields $P_{\lambda,\theta}$. In particular, the coefficients of $\mathbf{P}_\lambda$ do not have any poles in $\C\setminus \mathbb Q^{\leq 0}$.  For these facts and further properties of the polynomials $\mathbf P_\lambda$ see~~\cite{Sergeev2005GeneralisedDD}.
\end{remark}

\section{The refined CEP for $\pr{\gl(V)\oplus\gl(V),V\otimes V^*}$}
\label{sec:CEPdiag}

In this section we set $\theta:=1$, $V:=\C^{m|n}$, $\g\coloneqq \gl(m|n)\oplus\gl(m|n)$, $W:=V\otimes V^*$, and $\h\coloneqq \h_{m|n}\oplus \h_{m|n}$.   Then $\br_0\coloneqq \br^{\mathrm{op}}_{m|n}\oplus\br_{m|n}$ is a Borel subalgebra of $\g$ containing $\h$.  
The polynomial algebra $\mathscr{P}(W)$ is a completely reducible and multiplicity-free $\g$-module, with decomposition
\[
\mathscr{P}(W)=\bigoplus_{\lambda\in\mathscr{H}(m|n)}V_{\lambda}^*\otimes V_\lambda^{}.
\]
Let us denote  
the highest weight of $V_{\lambda}$ with respect to $\br_{m|n}$ by $\underline{\lambda}_0$; then  the $\br_{m|n}^{\mathrm{op}}$-highest weight of  $V_{\lambda}^*$ is $-\underline{\lambda}_0$.

We begin by   restating the solution to the Capelli Eigenvalue Problem from \cite{sahi_salmasian_serganova_2019} in terms of the Weyl vector of $\br_0$. 
Write $\tilde{x} \in \C^{m|n}$ for the image of $x \in \h^*_{m|n}$ under the standard isomorphism that sends $\epsilon_i$ to $e_i$ and $\delta_j$ to $e_{m+j}$.  
Let $\tau_0 \colon \h^* \to \C^{m|n}$ denote the affine map 
$$
\tau_0(x,y) =\tilde{y}+\tilde{\rho_0}
$$
where we identify $(x,y)\in \h^*$ with a pair of vectors $x,y\in \C^{m|n}$ and $\rho_0$ denotes the Weyl vector corresponding to $\br_{m|n}$.  Observing that
\[
\rho_0 = \sum_{i=1}^m\frac{m-n+1-2i}{2}\epsilon_i+\sum_{j=1}^n\frac{m+n+1-2j}{2}\delta_{j},
\]
we can rephrase the result of  \cite[Table 3]{sahi_salmasian_serganova_2019} pertaining to $\g$ as follows.

\begin{theo}\cite[Theorem 1.13]{sahi_salmasian_serganova_2019}\label{hadi_theo}
Let $\g$ and $W$ be as above. 
For every $\lambda,\mu \in\mathscr{H}(m|n)$,  the eigenvalue of the Capelli operator $D^{\mu}$  on the irreducible summand $V_{\lambda}^*\otimes V_{\lambda}^{}$ with $\br_0$-highest weight $ (-\underline{\lambda}_{0},\underline{\lambda}_{0})$ is equal to  
\[
\SPo\circ\tau_0 \pr{-\underline{\lambda}_0,\underline{\lambda}_0},
\] 
where $\SPo\in\Lambda_{m,n,1}$ is the interpolation super Jack polynomial associated to $\mu$ for parameter value $\theta=1$.
\end{theo}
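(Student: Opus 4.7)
The plan is to recover Theorem~\ref{hadi_theo} as a direct translation of \cite[Theorem 1.13 and Table 3]{sahi_salmasian_serganova_2019}, which already solves the CEP for this pair. The SSS formula expresses the eigenvalue as the interpolation super Jack polynomial evaluated at the twisted Frobenius coordinates $(p(\lambda),q(\lambda))$ of $\lambda$ with $\theta=1$. Since $\SPo\in\Lambda_{m,n,1}$ is one and the same polynomial regardless of the point of evaluation, the entire task reduces to checking that the affine image $\tau_0(-\underline{\lambda}_0,\underline{\lambda}_0)\in\C^{m+n}$ coincides with $(p(\lambda),q(\lambda))$ at $\theta=1$.

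First I would record that, under the standard Borel $\br_{m|n}$, the highest weight of $V_\lambda$ is $\underline{\lambda}_0=\sum_i\lambda_i\epsilon_i+\sum_j\max\{\lambda'_j-m,0\}\delta_j$, so the $\br_{m|n}^{\mathrm{op}}$-highest weight of $V_\lambda^*$ is $-\underline{\lambda}_0$, and consequently the $\br_0$-highest weight of $V_\lambda^*\otimes V_\lambda$ is $(-\underline{\lambda}_0,\underline{\lambda}_0)$, as stated. Because $\tau_0(x,y)=\widetilde y+\widetilde{\rho_0}$ depends only on the second component, I only need to expand $\widetilde{\underline{\lambda}_0}+\widetilde{\rho_0}$ coordinatewise. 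A routine computation using the explicit expression for $\rho_0$ recalled in the section yields, for $1\leq i\leq m$ and $1\leq j\leq n$,
\[
(\widetilde{\underline{\lambda}_0}+\widetilde{\rho_0})_i=\lambda_i+\tfrac{m-n+1-2i}{2},\qquad (\widetilde{\underline{\lambda}_0}+\widetilde{\rho_0})_{m+j}=\lr{\lambda'_j-m}+\tfrac{m+n+1-2j}{2},
\]
and a comparison with \eqref{eq:tw_F_c} at $\theta=1$ shows that these are precisely $p_i(\lambda)$ and $q_j(\lambda)$.

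With this identification in hand, Theorem~\ref{hadi_theo} follows immediately: the SSS eigenvalue formula reads $c_\mu(\lambda)=\SPo(p(\lambda),q(\lambda))$, and substituting the coordinate identity above rewrites this as $\SPo\circ\tau_0(-\underline{\lambda}_0,\underline{\lambda}_0)$. Multiplicity-freeness of the decomposition $\p(W)=\bigoplus_\lambda V_\lambda^*\otimes V_\lambda$ together with the well-definedness of Capelli operators (Appendix~\ref{appxA}) ensures that there is a unique scalar to match, so there is no ambiguity in this identification.

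The only genuine obstacle is bookkeeping. One has to confirm that the Borel subalgebra $\br_0=\br_{m|n}^{\mathrm{op}}\oplus\br_{m|n}$ used here corresponds (up to the $\br$ vs.\ $\br^{\mathrm{op}}$ symmetry) to the choice implicit in the SSS parametrization, and that the normalization of $\SPo$ from Definition~\ref{defn:InterSuperJackPoly} (value $d!$ at the distinguished point, matching the normalization of $D^\mu$ described in Appendix~\ref{appxA}) agrees with the version of the polynomial used in \cite{sahi_salmasian_serganova_2019}; the latter is obtained from Sergeev--Veselov's $SP^*_\mu$ by the same hook-type rescaling, so the two conventions are compatible. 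Once these points are reconciled, the result is immediate — reflecting the introduction's remark that the diagonal case is relatively straightforward, in contrast to the $(\gl(m|2n),\s^2(V))$ case treated later.
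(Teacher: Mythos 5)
Your proposal is correct and matches the paper's (implicit) justification: the paper presents Theorem~\ref{hadi_theo} as a restatement of \cite[Theorem 1.13, Table 3]{sahi_salmasian_serganova_2019}, and the only content to verify is exactly what you compute — that $\tau_0(-\underline{\lambda}_0,\underline{\lambda}_0)=\widetilde{\underline{\lambda}_0}+\widetilde{\rho_0}$ coincides coordinatewise with the twisted Frobenius coordinates \eqref{eq:tw_F_c} at $\theta=1$, together with the compatibility of the $d!$ normalization of $P_{\mu,1}$ with that of the Capelli operators from Appendix~\ref{appxA}. Your coordinate check is accurate, so nothing is missing.
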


Now suppose $\br = \br_1\oplus \br_2$ is an arbitrary Borel subalgebra of $\g$ containing $\h$.  Then $\br_i$ is a Borel subalgebra of $\gl(m|n)$ for $i=1,2$ and we denote its Weyl vector $\rho_{\br_i}$.  Write $\underline{\lambda}_{\br_1}$ for the highest weight of $V_\lambda^*$ with respect to $\br_1$ and $\underline{\lambda}_{\br_2}$ for the highest weight of $V_\lambda$ with respect to $\br_2$.
Then we may define affine maps $\tau_{\br, i} \colon \h^* = \h^*_{m|n}\oplus \h^*_{m|n} \to \C^{m|n}$   via
$$
\tau_{\br,1}(x,y) = -\tilde{x}-\tilde{\rho}_{\br_1}, 
\quad \text{and}\quad
\tau_{\br,2}(x,y) = 
\tilde{y}+\tilde{\rho}_{\br_2}.
$$
In this section we prove the following theorem.

\begin{theo}\label{T:1}
Let $\br=\br_1\oplus \br_2$ be an arbitrary Borel subalgebra of $\g$ containing $\h$ and let $\lambda \in   \mathscr{H}(m|n)$. 
Then 
\[
\SPo\circ\tau_{\br,1}(\underline{\lambda}_{\br_1}, \underline{\lambda}_{\br_2})=
\SPo\circ\tau_{\br,2}(\underline{\lambda}_{\br_1}, \underline{\lambda}_{\br_2})=
\SPo\circ\tau_0(-\underline{\lambda}_0,\underline{\lambda}_{0}).
\]
It follows that for any $\br$, the eigenvalue of the Capelli operator  $D^{\mu}$ on each irreducible component $V_\lambda^*\otimes V_\lambda^{}$ of $\p(W)$ is given by evaluating a polynomial on the highest weight of this module with respect to $\br$.  
\end{theo}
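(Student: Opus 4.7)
The plan is to reduce the theorem to showing that each of $\SPo\circ\tau_{\br,1}(\underline{\lambda}_{\br_1},\underline{\lambda}_{\br_2})$ and $\SPo\circ\tau_{\br,2}(\underline{\lambda}_{\br_1},\underline{\lambda}_{\br_2})$ is independent of the choice of Borel $\br = \br_1 \oplus \br_2$ of $\g$ containing $\h$.  At the base case $\br = \br_0$, a direct check from the definitions gives $\tau_{\br_0,1}(-\underline{\lambda}_0,\underline{\lambda}_0) = \tau_{\br_0,2}(-\underline{\lambda}_0,\underline{\lambda}_0) = \tau_0(-\underline{\lambda}_0,\underline{\lambda}_0)$, and the value of $\SPo$ at this point is the desired eigenvalue by Theorem~\ref{hadi_theo}.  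Since $\tau_{\br,1}$ depends only on $\br_1$ and $\tau_{\br,2}$ only on $\br_2$, the two invariance claims decouple into parallel arguments, so I focus on the first.

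Any Borel subalgebra $\br_1$ of $\gl(m|n)$ containing $\h_{m|n}$ can be obtained from $\br_{m|n}^{\mathrm{op}}$ by a sequence of simple odd reflections composed with elements of the Weyl group $W = S_m \times S_n$; this follows from the description of increasing and decreasing Borels in the Notation section, together with the observation that $\br_{m|n}$ and $\br_{m|n}^{\mathrm{op}}$ lie in a single $W$-orbit via the long element $w_0$.  Invariance of $\SPo\circ\tau_{\br,1}$ under $W$ is immediate: both $\underline{\lambda}_{\br_1}$ and $\rho_{\br_1}$ transform equivariantly by any $w \in W$, while $\SPo \in \Lambda_{m,n,1}$ is separately symmetric and therefore $W$-invariant.

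The main technical step is invariance under a single odd reflection at a simple isotropic root $\alpha$ of $\br_1$; write $\br_1' := s_\alpha \br_1$.  I would invoke the standard facts $\rho_{\br_1'} = \rho_{\br_1} + \alpha$ and
\[
\underline{\lambda}_{\br_1'} = \underline{\lambda}_{\br_1} - \alpha \text{ if } (\underline{\lambda}_{\br_1},\alpha) \neq 0, \qquad \underline{\lambda}_{\br_1'} = \underline{\lambda}_{\br_1} \text{ otherwise.}
\]
In the first case the vector $-\tilde{\underline{\lambda}}_{\br_1} - \tilde{\rho}_{\br_1}$ is unchanged and invariance of $\SPo$ is automatic.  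In the second case, the evaluation point $u = -\tilde{\underline{\lambda}}_{\br_1} - \tilde{\rho}_{\br_1}$ shifts by $-\tilde{\alpha}$.  Writing $\alpha = \pm(\epsilon_i - \delta_j)$, this shift is $\mp(e_i - e_{m+j})$, and by Lemma~\ref{lem:equforf} at $\theta = 1$ it preserves $\SPo(u)$ provided $u_i + u_{m+j} = 0$, a condition which translates to $(\underline{\lambda}_{\br_1} + \rho_{\br_1}, \alpha) = 0$.

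The main obstacle is verifying this hyperplane condition.  The summand $(\underline{\lambda}_{\br_1}, \alpha)$ vanishes by the case assumption, so the remaining task is to show $(\rho_{\br_1}, \alpha) = 0$ for every isotropic simple root $\alpha$ of $\br_1$.  This is the standard Freudenthal--de Vries-type identity $(\rho_{\br_1}, \alpha) = \tfrac{1}{2}(\alpha, \alpha) = 0$ for simple roots in a basic classical Lie superalgebra; alternatively it can be proved by induction on the sequence of odd reflections leading from $\br_{m|n}$ to $\br_1$, with the standard Borel serving as the computational base case.  The corresponding argument for $\SPo \circ \tau_{\br,2}$ is entirely analogous, with $\br_1$ replaced by $\br_2$, the shift $-\tilde{\alpha}$ replaced by $+\tilde{\alpha}$, and the opposite direction of the monoidal symmetry applied.
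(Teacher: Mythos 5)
Your proposal is correct and follows essentially the same route as the paper: the core step in both is the odd-reflection induction using $\rho_{\br'}=\rho_{\br}+\alpha$, the highest-weight change formula, the vanishing $(\rho_{\br},\alpha)=0$ for simple isotropic $\alpha$, and Lemma~\ref{lem:equforf} at $\theta=1$, followed by Weyl-group invariance via separate symmetry. The only real difference is organizational: you run the induction directly on each factor $\br_1$, $\br_2$ separately, whereas the paper does it only for increasing $\br_2$ and then handles decreasing $\br_1$ by the duality trick of passing to $\br_1\oplus\br_1^{\mathrm{op}}$; both work. One assertion in your reduction is false, however: $\br_{m|n}$ and $\br_{m|n}^{\mathrm{op}}$ do \emph{not} lie in a single $W$-orbit for $m,n\geq 1$, since $W=S_m\times S_n$ preserves the pattern of $\epsilon$'s and $\delta$'s in the $\delta\epsilon$-sequence and these two Borels have patterns $\epsilon^m\delta^n$ and $\delta^n\epsilon^m$. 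Fortunately this observation is not needed: for the $\tau_{\br,1}$ component it suffices that every Borel is $W$-conjugate to a decreasing one, which is connected to $\br_{m|n}^{\mathrm{op}}$ by simple odd reflections, and symmetrically for $\tau_{\br,2}$ with increasing Borels and $\br_{m|n}$, so your argument survives with that sentence deleted.
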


\begin{proof} 
Since $\SPo\in\Lambda_{m,n,1}$, it suffices to show that for $i=1,2$ and any $\lambda \in \mathscr{H}(m|n)$, 
\begin{equation}\label{E:toshow}
\tau_{\br,i}(\underline{\lambda}_{\br_1}, \underline{\lambda}_{\br_2})\sim
\tau_0(-\underline{\lambda}_0,\underline{\lambda}_0).
\end{equation}
We verify this in stages.  
First assume that $\br_2$ is an \emph{increasing} Borel subalgebra of $\gl(m|n)$, that is, it is obtained from the standard Borel subalgebra $\br_{m|n}$ by a finite sequence of simple odd reflections. 
Suppose $\br_2 = r_\alpha(\br')$ where $\br'$ is an increasing Borel subalgebra and $\alpha = \epsilon_i-\delta_j$ is a simple isotropic root with respect to $\br'$.  By induction, we may suppose
that~\eqref{E:toshow} holds for $\br'$, namely, that 
\[
\widetilde{\underline{\lambda}_{\br'}} + \widetilde{\rho_{\br'}} \sim
\widetilde{\underline{\lambda}_0} + \widetilde{\rho_{0}}.
\]
We need to show that 
$\widetilde{\underline{\lambda}_{\br_2}} + \widetilde{\rho_{\br_2}} \sim \widetilde{\underline{\lambda}_{\br'}} + \widetilde{\rho_{\br'}}$.  
Recall from \cite[Lemma 1.40]{cheng_wang_2013} that the highest weight $\underline{\lambda}_{\br_2}$ is obtained from $\underline{\lambda}_{\br'}$ by the formula 
\begin{equation}\label{E:hwchange}
\underline{\lambda}_{\br_2}=
\begin{cases}
\underline{\lambda}_{\br'}-\alpha & \text{if $(\underline{\lambda}_{\br'},\alpha)\neq0$};\\
\underline{\lambda}_{\br'}&\text{if $(\underline{\lambda}_{\br'},\alpha)=0$},
\end{cases}
\end{equation}
while by \cite[Prop 1.33]{cheng_wang_2013} we have $(\rho_{\br'},\alpha)=0$ and $\rho_{\br_2}=\rho_{\br'}+\alpha$.  Therefore if $(\underline{\lambda}_{\br'},\alpha)\neq 0$, we have 
\[
\underline{\lambda}_{\br_2}+\rho_{\br_2}=(\underline{\lambda}_{\br'}-\alpha )+ (\rho_{\br'}+\alpha) = \underline{\lambda}_{\br'}+\rho_{\br'},
\]
and we are done.  Otherwise, we have $(\underline{\lambda}_{\br'},\alpha)=0$, so that
$\underline{\lambda}_{\br_2}=\underline{\lambda}_{\br'}$ and we instead have
\[
\widetilde{\underline{\lambda}_{\br_2}} + \widetilde{\rho_{\br_2}} = \widetilde{\underline{\lambda}_{\br'}} + \widetilde{\rho_{\br'}} + \widetilde{\alpha} =  \widetilde{\underline{\lambda}_{\br'}} + \widetilde{\rho_{\br'}} + e_i-e_{m+j}.
\]
Since $(\underline{\lambda}_{\br'}+\rho_{\br'}, \alpha)=0$, it follows that the vector $(x_1,\cdots, x_m, y_1, \cdots, y_n):=\widetilde{\underline{\lambda}_{\br'}} + \widetilde{\rho_{\br'}}$ satisfies $x_i+y_j=0$.
Thus by Lemma~\ref{lem:equforf} with $\theta=1$ we have
\[
\widetilde{\underline{\lambda}_{\br'}} + \widetilde{\rho_{\br'}}\sim
\widetilde{\underline{\lambda}_{\br'}} + \widetilde{\rho_{\br'}} + e_i-e_{m+j} = \widetilde{\underline{\lambda}_{\br_2}} + \widetilde{\rho_{\br_2}},
\]
as required.  Thus by induction,~\eqref{E:toshow} holds for $i=2$ and all increasing Borel subalgebras $\br_2$.

Now suppose $\br_1$ is a decreasing Borel subalgebra, meaning that $\br_1^\mathrm{op}$ is increasing.  Then by the symmetry of the definition of $\tau_{\br,i}$ we deduce directly that
\begin{align*}
\tau_{\br,1}(\underline{\lambda}_{\br_1}, \underline{\lambda}_{\br_2})&=
\tau_{\br_1\oplus \br_1^{\mathrm{op}},1}\;(\underline{\lambda}_{\br_1}, \underline{\lambda}_{\br_1^{\mathrm{op}}})\\
&=
\tau_{\br_1\oplus \br_1^{\mathrm{op}},2}\;(\underline{\lambda}_{\br_1}, \underline{\lambda}_{\br_1^{\mathrm{op}}})\\
&\sim
\tau_0(-\underline{\lambda}_0,\underline{\lambda}_0),
\end{align*}
which implies \eqref{E:toshow} holds with $i=1$ whenever $\br_1$ is decreasing.

Finally, suppose that $\br=\br_1'\oplus \br_2'$ is an arbitrary Borel subalgebra of $\g$. Then for $i=1,2$ there exist permutations $\sigma_i\in S_m$ (permuting $\epsilon_1,\ldots,\epsilon_m$) and $\sigma_i'\in S_n$ (permuting $\delta_1,\ldots,\delta_n$) such that $(\sigma_i\times\sigma_i') (\br_i')=\br_i$ is a decreasing (respectively, increasing) Borel subalgebra of $\gl(m|n)$ when $i=1$ (respectively, $i=2$).
Then we have for $i=1,2$ that $(\sigma_i\times \sigma_i')(\underline{\lambda}_{\br_i'})=\underline{\lambda}_{\br_i}$ and similarly $(\sigma_i\times \sigma_i') (\rho_{\br_i'}) = \rho_{\br_i}$.  By the separate symmetry of the elements of
 $\Lambda_{m,n,1}$
  these permutations leave polynomials in $\Lambda_{m,n,1}$ invariant, so \eqref{E:toshow} follows immediately.
\end{proof}

\section{Borel subalgebras and highest weights for $\gl(m|2n)$}\label{section:bsam2n}

From now on, we set $\theta:=1/2$, $V:=\C^{m|2n}$, $\g:=\gl(m|2n)$ and   $W:=\s^2(V)$.
Let $\h$ be the standard  Cartan subalgebra of $\g$. Let $\br_0:=\br^{\mathrm{op}}$ be the {\bf opposite} standard (that is, lower triangular) Borel subalgebra of $\g$.

\subsection{The solution to the CEP with respect to $\br_0$}
The 
polynomial algebra $\mathscr{P}(W)$ is a completely reducible and multiplicity-free $\g$-module.  A key result of \cite{sahi_salmasian_serganova_2019} is that we can parametrize this decomposition by $m|n$ hook tableaux.  Given $\lambda \in \mathscr{H}(m|n)$, let $2\lambda\in \mathscr{H}_2(m|2n)$ be the $(m|2n)$ hook tableau obtained by doubling each row, and let $W_\lambda:=(V_{2\lambda})^*$. Then we have the decomposition
\begin{equation}\label{eq:decompofp}
\mathscr{P}(W)=\bigoplus_{\lambda \in \mathscr{H}(m|n)}W_\lambda.
\end{equation}

In this case, given $\lambda=(\lambda_1,\lambda_2, \ldots)\in \mathscr{H}(m|n)$ with transpose $\lambda'=(\lambda_1', \lambda_2', \ldots)$, the highest weight $\underline{\lambda}_0$ of $W_\lambda$ with respect to $\br_0=\br_{m|2n}^{\mathrm{op}}$ is given by
\begin{equation}\label{eq:hwindecompm2n}
\underline{\lambda}_0=-\sum_{i=1}^m2\lambda_i\epsilon_i-\sum_{j=1}^n\mu_j\pr{\delta_{2j-1}+\delta_{2j}}
\end{equation}
where $\mu_j:=\langle\lambda'_j-m\rangle=\max\{0,\lambda'_j-m\}$.  Note that this corresponds precisely to the $m|2n$ hook partitition of $2\lambda$, which is $(2\lambda_1, \dots, 2\lambda_m|\mu_1,\mu_1, \dots, \mu_n,\mu_n) \in \mathscr{H}_2(m|2n)$.

Let $\Omega_{m|2n} = \{\underline{\lambda}_0 \mid \lambda \in \mathscr{H}(m|n)\}$ be the set of all $\br_0$-highest weights of $\g$-modules appearing in the decomposition \eqref{eq:decompofp}. The Zariski closure of  $\Omega_{m|2n}$ in $\h^*$ is a   subspace we denote  $\mathfrak{a}^*$.
 Let $\weird \colon \mathfrak{h}^*\to \C^{m|n}$ be the linear map defined by
$$
\weird(a_1, \ldots, a_m|b_1, \ldots, b_{2n}) = \left(\frac{a_1}{2}, \cdots, \frac{a_m}{2}\mid \frac{b_1+b_2}{2}, \cdots, \frac{b_{2n-1}+b_{2n}}{2}\right).
$$
Observe that if $\rho_0$ denotes the Weyl vector corresponding to $\br_0$, then we have
\begin{equation}\label{eq:glm2nstd}
\weird(\rho_0) =  -\sum_{i=1}^m\frac{m-2n+1-2i}{4}e_i - \sum_{k=1}^n\frac{m+2n+2-4k}{2}e_{m+k}.
\end{equation}
Define, for any $x\in \mathfrak{a}^*$, the map $\tau_0^{\mathfrak{a}^*}\colon \mathfrak{a}^* \to \C^{m|n}$ by $\tau_0^{\mathfrak{a}^*}(x)=-\weird(x + \rho_0)$.  Then it is easy to see that 
we can rephrase the result of  \cite[Table 3]{sahi_salmasian_serganova_2019} pertaining to $\g$ as follows.

\begin{theo}\cite[Theorem 1.13.]{sahi_salmasian_serganova_2019}\label{hadi_theo_2}
Let $\g$ and $W$ be as above.  
 Then for each $\lambda,\mu\in\mathscr{H}(m|n)$,  the eigenvalue of the Capelli operator $D^\mu$  on the irreducible summand $W_\lambda$  with highest weight $\underline{\lambda}_0\in \mathfrak{a}^*$ with respect to $\br_0$ is equal to  
 $
 {P}_{\mu,\frac12} \circ \tau_0^{\mathfrak{a}^*}({\underline{\lambda}_0}).
 $
\end{theo}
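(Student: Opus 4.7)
The plan is to deduce Theorem~\ref{hadi_theo_2} directly from \cite[Theorem 1.13]{sahi_salmasian_serganova_2019}: according to the relevant line of \cite[Table 3]{sahi_salmasian_serganova_2019}, that theorem already asserts that the eigenvalue of $D^\mu$ on $W_\lambda$ equals $P_{\mu,\frac12}$ evaluated at the twisted Frobenius coordinates $(p(\lambda), q(\lambda))$ at parameter $\theta=\tfrac12$. Since $P_{\mu,\frac12}$ is the same polynomial appearing in both formulations, the theorem will follow as soon as we verify the affine identity
\[
\tau_0^{\mathfrak{a}^*}(\underline{\lambda}_0) = (p(\lambda), q(\lambda)) \in \C^{m|n}.
\]

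To carry this out, I would first observe that $\underline{\lambda}_0 \in \mathfrak{a}^*$ by definition of $\mathfrak a^*$, so $\tau_0^{\mathfrak{a}^*}(\underline{\lambda}_0)$ is well defined, and split it as $-\weird(\underline{\lambda}_0) - \weird(\rho_0)$ by linearity. Substituting \eqref{eq:hwindecompm2n} and using that $\weird$ averages the consecutive $\delta_{2j-1},\delta_{2j}$ coordinates collapses each doubled odd block to a single $-\mu_j$, giving $-\weird(\underline{\lambda}_0) = (\lambda_1,\ldots,\lambda_m \mid \mu_1,\ldots,\mu_n)$. Adding $-\weird(\rho_0)$ from \eqref{eq:glm2nstd} then produces the $i$-th even coordinate $\lambda_i + \tfrac{m-2n+1-2i}{4}$ and the $j$-th odd coordinate $\mu_j + \tfrac{m+2n+2-4j}{2}$.

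The next step is to match these entries against \eqref{eq:tw_F_c} at $\theta = \tfrac12$: specializing $p_i(\lambda) = \lambda_i - \theta(i-\tfrac12)-\tfrac12(n-\theta m)$ and $q_j(\lambda) = \mu_j - \theta^{-1}(j-\tfrac12)+\tfrac12(\theta^{-1}n+m)$ to this value of $\theta$ reproduces exactly the quantities just computed, so the desired identity holds coordinate-by-coordinate and the theorem follows. The only potentially delicate point, and thus the main obstacle, is keeping the sign and normalization conventions straight: because $\br_0$ is the \emph{opposite} of the standard Borel, $\underline{\lambda}_0$ carries minus signs both relative to the $\rho_0$ in \eqref{eq:glm2nstd} and relative to the Frobenius coordinates, and the averaging map $\weird$ must be applied carefully to the doubled $\delta$-block coming from $2\lambda$ rather than $\lambda$. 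Once these normalizations are pinned down the argument is pure bookkeeping, with no new input beyond \cite{sahi_salmasian_serganova_2019}; the genuinely new work in this paper begins only once $\br_0$ is replaced by an arbitrary Borel subalgebra in the subsequent sections.
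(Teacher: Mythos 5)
Your proposal is correct and is exactly the verification the paper leaves implicit when it says the cited result "can be rephrased" via $\tau_0^{\mathfrak{a}^*}$: the coordinate computation checks out, since $-\weird(\underline{\lambda}_0)=(\lambda_1,\ldots,\lambda_m\mid\mu_1,\ldots,\mu_n)$ and adding $-\weird(\rho_0)$ reproduces $p_i(\lambda)=\lambda_i+\tfrac{m-2n+1-2i}{4}$ and $q_j(\lambda)=\mu_j+\tfrac{m+2n+2-4j}{2}$, which are the twisted Frobenius coordinates \eqref{eq:tw_F_c} at $\theta=\tfrac12$. No further argument is needed beyond the citation.
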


\subsection{Explicit form of $\tau_0(\underline{\lambda}_0)$}

The affine map $\tau^{\mathfrak{a}^*}_0$ is only characterized by the theorem  on elements of $\mathfrak{a^*}$.  To explicitly describe all extensions of $\tau^{\mathfrak{a}^*}_0$  to affine maps  $\tau_0\colon \h^*\to\C^{m|n}$, let $M_0$ denote the matrix of the linear transformation $-\weird \colon \h^*\to \C^{m|n}$ with respect to our standard bases.  Then  we have
\begin{equation}\label{E:M0}
M_0 = \mat{-\frac12 I_m & 0_{m\times 2n} \\ 0_{n\times m} & D} 
\end{equation}
where $D = (d_{ij})$ is a $n\times 2n$ matrix whose entries satisfy
\[
d_{ij} = \begin{cases} 
-\frac12 & \text{if } j=2i-1\mbox{ or }2i \\ 
0 & \text{otherwise}.
\end{cases}
\]
\begin{defn}\label{def:setofM}
Let $\mathcal{C}$ be the set of all matrices of the form
$$
M = M_0 + \mat{0_{(m+n)\times m} & A}
$$
where 
 $A$ is any $(m+n)\times (2n)$ matrix whose columns $A_j$ satisfy the relations
$A_{2k} = -A_{2k-1}$
for all $1\leq k \leq n$.  We also set 
\begin{equation}\label{E:X0rho}
X_0 = -\weird(\rho_0)= M_0\rho_0 = \tau_0^{\mathfrak{a}^*}(0).
\end{equation}
\end{defn}

\begin{lem}\label{lem:Mform}
Every extension of $\tau_0^{\mathfrak{a}^*} \colon \mathfrak{a}^*\to \C^{m|n}$ to an affine map on $\h^*$ is given on $x\in \h^*$ by $\tau_0(x)=Mx+X_0$ for some $M\in \mathcal{C}$.
\end{lem}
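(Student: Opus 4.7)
My plan is to reduce the lemma to a linear-algebra exercise once the Zariski closure $\mathfrak{a}^*$ is identified explicitly.

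The first step is to show that $\mathfrak{a}^*$ coincides with the linear subspace
\[
\mathfrak{a}^*_0 := \{x \in \h^* \mid x_{\delta_{2j-1}} = x_{\delta_{2j}}\text{ for all }1 \le j \le n\},
\]
which has dimension $m+n$.  The inclusion $\Omega_{m|2n}\subseteq \mathfrak{a}^*_0$ is immediate from \eqref{eq:hwindecompm2n}, since the coefficients of $\delta_{2j-1}$ and $\delta_{2j}$ in $\underline{\lambda}_0$ are both equal to $-\mu_j$.  For the reverse inclusion, I would argue that $\Omega_{m|2n}$ is Zariski-dense in $\mathfrak{a}^*_0$ by letting the first $m$ parts $\lambda_1 \ge \cdots \ge \lambda_m$ range over arbitrary non-increasing tuples of non-negative integers, and separately letting the tail of $\lambda$ vary to produce any partition $(\mu_1,\ldots,\mu_n)$ of length $\le n$; the resulting tuples $(-2\lambda_1,\ldots,-2\lambda_m,-\mu_1,\ldots,-\mu_n)$ are then Zariski-dense in $\C^{m+n}\cong \mathfrak{a}^*_0$.

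With this identification in hand, I would exploit that $0\in\Omega_{m|2n}\subseteq\mathfrak{a}^*$ (corresponding to the empty partition) to force any affine extension $\tau_0\colon \h^*\to\C^{m|n}$ to satisfy $\tau_0(0)=\tau_0^{\mathfrak{a}^*}(0)=X_0$, so the constant term must equal $X_0$.  Since $\weird$ is linear on all of $\h^*$, the formula $-\weird(x+\rho_0)=M_0x+X_0$ defines a globally valid affine extension, and it suffices to characterize those linear maps $\psi\colon \h^*\to \C^{m|n}$ that vanish on $\mathfrak{a}^*$: setting $M=M_0+\psi$, these are exactly the allowed perturbations of the linear part.

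Finally, I would write $\psi$ as a matrix $[B_1\mid B_2]$ in block form with respect to the standard basis $\{\epsilon_1,\ldots,\epsilon_m,\delta_1,\ldots,\delta_{2n}\}$ of $\h^*$.  The conditions $\psi(\epsilon_i)=0$ for $1\le i\le m$ force $B_1=0$, while $\psi(\delta_{2j-1}+\delta_{2j})=0$ for each $j$ forces $(B_2)_{2j-1}+(B_2)_{2j}=0$.  These are precisely the column relations $A_{2k}=-A_{2k-1}$ defining $\mathcal{C}$ in Definition~\ref{def:setofM}, so every extension has matrix of the required form, and conversely every $M\in\mathcal{C}$ yields a valid extension.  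The only step requiring any care is the Zariski density argument in the first paragraph, but it is elementary given the flexibility in choosing the hook partition $\lambda$.
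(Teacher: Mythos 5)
Your proof is correct and follows essentially the same route as the paper's, which simply records that $\mathfrak{a}^*=\text{span}\{\epsilon_1,\dots,\epsilon_m,\delta_1+\delta_2,\dots,\delta_{2n-1}+\delta_{2n}\}$ and reads off the space $\mathcal{K}$ of linear maps vanishing on it; you merely supply the Zariski-density and constant-term details that the paper leaves implicit. One small nitpick: the first $m$ parts and the tail of $\lambda$ cannot be varied fully independently (one needs $\lambda_m\geq \lambda_{m+1}$), but restricting to $\lambda_m\geq n$ restores independence, so the density claim is unaffected.
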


\begin{proof}
It suffices to observe that $\mathfrak{a}^*=\text{span}\{\epsilon_1, \cdots, \epsilon_m, \delta_1+\delta_2, \cdots, \delta_{2n-1}+\delta_{2n}\}$; thus the set of all linear maps to $\C^{m|n}$ vanishing on $\mathfrak{a}^*$
is given by
\[
\mathcal{K}=\left\{ \mat{0_{(n+m)\times m} & A}  \mid A_{2k-1}=-A_{2k} \; \text{ for all $1\le k\le n,$}\right\}
\]
where $A_j$ denotes the $j$th column of $A$.
\end{proof}

Note that since $\rho_0\notin \mathfrak{a}^*$, the equation  $M\rho_0 = X_0$ with  $M\in \mathcal{C}$ holds \emph{only} for $M=M_0$.  In the following sections, we will see that the single matrix $M_0$ is not suitable for use with other Borel subalgebras, but that other subsets of $\mathcal{C}$ will be.

\subsection{Borel subalgebras and decreasing $\delta\epsilon$ sequences}\label{subsec:decreasingb}

One may parametrize the set $\mathcal{B}$ of Borel subalgebras of $\g$ containing $\h$ by sequences $(\xi_i)_{i=1}^{m+2n}$ where $\{\xi_i |1\leq i\leq m+2n\} = \{ \epsilon_i, \delta_j\mid 1\leq i \leq m, 1\leq j\leq 2n\}$ --- called $\delta\epsilon$ sequences --- by associating to $\br$ the sequence for which $\{\xi_{i}-\xi_{i+1}\mid 1\leq i < m+2n\}$ is its set of simple roots.  In particular, the simple odd (isotropic) roots of $\br$ correspond to pairs $\epsilon_i\delta_j$ or $\delta_j\epsilon_i$ in the  sequence, and if $\alpha$ is such a root, then the $\delta\epsilon$ sequence of $r_\alpha(\br)$ is obtained from that of $\br$ by swapping $\epsilon_i$ and $\delta_j$. 

For example, $\br^{\mathrm{op}}$ corresponds to the sequence $\delta_{2n}\delta_{2n-1}\cdots \delta_1\epsilon_m \epsilon_{m-1}\cdots \epsilon_1$ and it has a single odd simple root $\delta_1-\epsilon_m$.  

 If the indices of the $\epsilon_i$ and of the $\delta_j$ are (separately) decreasing, then we say this is a decreasing $\delta\epsilon$-sequence.  These correspond exactly to the decreasing Borel subalgebras defined earlier, since each one may be obtained from $\br^{\mathrm{op}}$ be a sequence of simple odd reflections.  Let $\mathcal{B}^d$ denote the set of decreasing Borel subalgebras of $\g$ containing $\h$.

\begin{defn}\label{D:ellj}
Let $\br\in\mathcal{B}^d$, and consider the $\delta\epsilon$ sequence associated to $\br$.
\begin{enumerate}
\item For each $1\leq i \leq m$,  let $\ell_{i,\br}$ be the number of $\delta$s to the right of $\epsilon_i$.
\item For each $1 \leq k \leq 2n$, let $j_{k,\br}$ be the number of $\epsilon$s to the left of $\delta_k$.  
\end{enumerate}
\end{defn}

When $\br$ is clear from the context, we may write $\ell_i$ and $j_k$ for $\ell_{i,\br}$ and  $j_{k,\br}$ respectively.  It is easy to see that, for example, $j_k = \#\{i \mid \ell_{i}\geq k\}$.  Thus a decreasing Borel subalgebra (or equivalently, the corresponding decreasing $\delta\epsilon$ sequence) is completely determined by specifying the values $\ell_1 \leq \cdots \leq \ell_m$ (or equivalently, the values $j_1\geq  \cdots \geq j_{2n}$). 

\begin{defn}\label{defn:genericroots}
Let $\br\in\mathcal{B}^d$. With $\ell_i$ as defined above, let $\mathcal{G}_{\br}$ be the set of roots  
\begin{align}  
\label{E:star}\qquad &\delta_1-\epsilon_m, \delta_2-\epsilon_m, \cdots, \delta_{\ell_m}-\epsilon_m; \\
&\delta_1-\epsilon_{m-1}, \delta_2-\epsilon_{m-1}, \cdots, \delta_{\ell_{m-1}}-\epsilon_{m-1};\nonumber\\
&\cdots\nonumber\\
&\delta_1-\epsilon_1, \delta_2-\epsilon_1, \cdots, \delta_{\ell_1}-\epsilon_1.\nonumber
\end{align}
\end{defn}

The importance of $\mathcal{G}_{\br}$ is as follows.  Suppose that we obtain $\br$ from $\br^{\mathrm{op}}$ by a sequence of odd simple reflections, that is, such that $\br_0 = \br^{\mathrm{op}}$, $\br=\br_t$ for some $t>0$ and for each $1\leq k \leq t$, 
$\br_k = r_{\alpha_k}(\br_{k-1})$ where $\alpha_k$ is an odd simple (isotropic) root of $\br_{k-1}$. 
Then if $t$ is minimal we necessarily have the equality of sets $\{\alpha_i \mid 1\leq i \leq t\}=\mathcal{G}_{\br}$, since the effect of each simple odd reflection $\pm(\epsilon_i-\delta_j)$  is to swap the adjacent terms $\epsilon_i$ and $\delta_j$ in the corresponding $\delta\epsilon$ sequence.  Moreover, the ordered list of roots in \eqref{E:star}, (ordered from the top row to the bottom row) is one sequence satisfying these conditions. 
Set
\begin{equation}\label{E:rbr}
r_{\br}:=\sum_{\alpha\in\mathcal{G}_\br} \alpha.
\end{equation} 
The following lemma is immediate. 
\begin{lem}\label{L:rbr}
For each $\br\in \mathcal{B}^d$ we have
$$
r_{\br} = -\sum_{i=1}^m \ell_{i,\br}\epsilon_i  + \sum_{k=1}^{2n} j_{k,\br}\delta_{k}.
$$
\end{lem}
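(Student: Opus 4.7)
The plan is straightforward: simply collect the coefficients of each basis element $\epsilon_i$ and $\delta_k$ in the sum
\[
r_{\br}=\sum_{\alpha\in\mathcal{G}_\br}\alpha=\sum_{i=1}^m\sum_{k=1}^{\ell_{i,\br}}(\delta_k-\epsilon_i).
\]
The assertion then follows by two separate bookkeeping steps, both of which are essentially tautological once the definitions of $\ell_{i,\br}$ and $j_{k,\br}$ are recalled.

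First I would extract the coefficient of each $\epsilon_i$. Fixing $i$, the basis element $\epsilon_i$ appears (with coefficient $-1$) in precisely the roots on the $i$-th row of the display \eqref{E:star}, namely $\delta_1-\epsilon_i,\ldots,\delta_{\ell_{i,\br}}-\epsilon_i$. There are $\ell_{i,\br}$ such roots, yielding the contribution $-\ell_{i,\br}\epsilon_i$ to $r_{\br}$.

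Next I would extract the coefficient of each $\delta_k$. Fixing $k$, the basis element $\delta_k$ appears (with coefficient $+1$) in the root $\delta_k-\epsilon_i$ exactly when $k\leq \ell_{i,\br}$, i.e., when $\ell_{i,\br}\geq k$. The number of indices $i\in\{1,\ldots,m\}$ satisfying this inequality equals $\#\{i\mid \ell_{i,\br}\geq k\}$, which is precisely $j_{k,\br}$ by the observation made in the paragraph immediately following Definition~\ref{D:ellj}. Summing the two contributions gives
\[
r_{\br}=-\sum_{i=1}^m\ell_{i,\br}\epsilon_i+\sum_{k=1}^{2n}j_{k,\br}\delta_k,
\]
as claimed. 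There is no real obstacle; the lemma is a direct transcription of the two equivalent descriptions of a decreasing $\delta\epsilon$-sequence (via the $\ell_i$'s or via the $j_k$'s) already identified before the statement. I would include it essentially as a short paragraph.
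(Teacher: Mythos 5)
Your proof is correct and is exactly the computation the paper has in mind (the paper simply declares the lemma ``immediate'' and gives no written proof). Collecting the $\epsilon_i$-coefficients from the rows of \eqref{E:star} and the $\delta_k$-coefficients via the identity $j_{k,\br}=\#\{i\mid \ell_{i,\br}\geq k\}$ noted after Definition~\ref{D:ellj} is all that is needed.
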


\subsection{A formula for highest weights with respect to $\br$}\label{sub:formula}

Let $\lambda\in \mathscr{H}(m|n)$.  
For any $\br\in \mathcal{B}^d$ (respectively, for $\br^{\mathrm{op}}$) write $\underline{\lambda}_{\br}$ (respectively, $\underline{\lambda}_0$) for the corresponding highest weight of $W_\lambda$.

Suppose $\br$ is  obtained from $\br^{\mathrm{op}}$ by the sequence of simple odd reflections in the roots \eqref{E:star}.   For any adjacent pair of Borel subalgebras $\br'' = r_{\delta_j-\epsilon_i}(\br')$ of this sequence, we have from \eqref{E:hwchange} that 
$$
\underline{\lambda}_{\br''} = \underline{\lambda}_{\br'}- \delta_j+\epsilon_i 
$$
unless $(\underline{\lambda}_{\br'},\delta_j-\epsilon_i)=0$.  Since all coefficients of $\underline{\lambda}_{\br'}$ are nonpositive, this latter holds if and only if its $i$th and $(m+j)$th coefficients are both zero.

It follows that 
the highest weight of $W_\lambda$ with respect to $\br$ takes the form
$$
\underline{\lambda}_{\br} = \underline{\lambda}_0-r({\lambda},\br), \quad \text{where} \quad r({\lambda},\br):=\sum_{\alpha \in \mathcal{N}_{\br}}\alpha
$$
for some subset $\mathcal{N}_{\br}\subseteq \mathcal{G}_{\br}$.

\begin{defn}
If $\mathcal{N}_{\br}=\mathcal{G}_{\br}$, then $r({\lambda},\br) = r_{\br}$ and we say that  $\lambda\in \mathscr{H}(m|n)$ 
is \emph{generic} for $\br$.   Otherwise, 
$\lambda$ is called nongeneric for $\br$.
\end{defn}

Note that since $(\ell_{i,\br})_i$ is increasing  whereas $(2\lambda_i)_i$ is decreasing, these sequences cross at most once.  The  indices $i$ for which $\ell_{i,\br}\leq 2\lambda_i$ are key to describing $r({\lambda},\br)$ in the nongeneric case.

\begin{defn}\label{defnimportantI}Let $\br\in\mathcal{B}^d$ and $\lambda\in\mathscr{H}(m|n)$. If $\ell_{m,\br}>2\lambda_m$, then let $I_{\lambda,\br}$ denote the least index $I\geq 1$ for which $
\ell_{I}> 2\lambda_{I}$.
Further, set 
$$
\ell(\lambda, i,\br) :=\min\{\ell_{i,\br},2\lambda_i\}
$$ for all $1\le i\le m$; then $\ell(\lambda, i,\br)=\ell_{i,\br}$ if and only if $i < I_{\lambda,\br}$.
\end{defn}

\begin{prop}\label{P:highestweightMN}
Let $\br\in \mathcal{B}^d$ and $\lambda\in \mathscr{H}(m|n)$.    Then $r(\lambda,\br) = \sum_{\alpha \in \mathcal{N}_\br}\alpha$ where $\mathcal{N}_\br$ is the following set of roots:
\begin{align}
\label{E:rlambdab}\qquad &\delta_1-\epsilon_m, \delta_2-\epsilon_m, \cdots, \delta_{\ell({\lambda}, m,\br) }-\epsilon_m; \\
&\delta_1-\epsilon_{m-1}, \delta_2-\epsilon_{m-1}, \cdots, \delta_{\ell({\lambda}, m-1,\br) }-\epsilon_{m-1};\nonumber\\
&\cdots\nonumber\\
&\delta_1-\epsilon_1, \delta_2-\epsilon_1, \cdots, \delta_{\ell({\lambda}, 1,\br) }-\epsilon_1,\nonumber
\end{align}
where we interpret any row corresponding to a value of $\ell(\lambda, i,\br)=0$ as an empty row. 
In particular $\underline{\lambda}_{\br}=\underline{\lambda}_0-r({\lambda},\br)$ is the $\br$-highest weight of $W_\lambda$.  
\end{prop}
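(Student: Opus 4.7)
The plan is to trace the iterated action of formula \eqref{E:hwchange} as the reflections in \eqref{E:star} are applied row-by-row (from the row involving $\epsilon_m$ down to the row involving $\epsilon_1$), and within each row from left to right, and to identify precisely which of these reflections have nonzero pairing with the current highest weight.  I would proceed by an outer induction on $i$, going from $i=m+1$ down to $i=1$, with the claim that the highest weight $\underline{\lambda}^{(i)}$ obtained after processing rows $m, m-1, \ldots, i$ of \eqref{E:star} satisfies
\[
\underline{\lambda}^{(i)} = \underline{\lambda}_0 - \sum_{i' = i}^{m} \sum_{j=1}^{\ell(\lambda, i', \br)} (\delta_j - \epsilon_{i'}).
\]
The case $i = m+1$ is immediate, and the proposition follows by setting $i = 1$.

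For the inductive step, I would process row $i$ by applying the reflections $r_{\delta_1 - \epsilon_i}, r_{\delta_2 - \epsilon_i}, \ldots, r_{\delta_{\ell_{i,\br}} - \epsilon_i}$ in order.  Under the inductive hypothesis, the $\epsilon_i$-coefficient of $\underline{\lambda}^{(i+1)}$ equals $-2\lambda_i$ (since prior rows only affect $\epsilon_{i'}$-coefficients for $i' > i$), and the $\delta_j$-coefficient equals $-c_j$ where
\[
c_j := \mu_{\lceil j/2 \rceil} + \#\{i' > i : j \leq \ell(\lambda, i', \br)\} \geq 0.
\]
Using the diagonal form with $(\epsilon_k, \epsilon_k) = 1 = -(\delta_k, \delta_k)$, the pairing at step $j$ of row $i$ equals $c_j + 2\lambda_i - s$, where $s$ counts the prior successful reflections of the row.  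An inner induction on $j$ would then show that exactly the first $\ell(\lambda, i, \br) = \min\{\ell_{i,\br}, 2\lambda_i\}$ reflections in the row succeed.  In the regime $j \leq 2\lambda_i$, the prior $j-1$ reflections all succeeded inductively, so the pairing equals $c_j + 2\lambda_i - (j-1) \geq 1 > 0$ and the reflection contributes.  In the regime $2\lambda_i < j \leq \ell_{i,\br}$, the $\epsilon_i$-coefficient has already reached $0$ after $2\lambda_i$ successes, and the pairing reduces to $c_j$.

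The hard part will be verifying Case 2, namely showing $c_j = 0$ in that regime.  For the counting term, I would observe that for any $i' > i$ the inequalities $\ell(\lambda, i', \br) \leq 2\lambda_{i'} \leq 2\lambda_i < j$ (using that $\lambda$ is a partition) show no such $i'$ contributes.  For the $\mu$-term, the bound $\lambda_{m+1} \leq \lambda_i < j/2$ yields $\lambda_{m+1} < \lceil j/2 \rceil$, which is equivalent to $\lambda'_{\lceil j/2 \rceil} \leq m$, and therefore $\mu_{\lceil j/2 \rceil} = \langle \lambda'_{\lceil j/2 \rceil} - m \rangle = 0$.  With $c_j = 0$ established, the reflection has zero pairing and does not contribute, which completes both inductions.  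The final assertion $\underline{\lambda}_\br = \underline{\lambda}_0 - r(\lambda, \br)$ is then the statement of the outer induction at $i = 1$.
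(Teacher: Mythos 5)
Your proposal is correct and follows essentially the same route as the paper: an outer induction over the rows of \eqref{E:star} from $i=m$ down to $i=1$ with the same partial-sum formula for the intermediate highest weight, combined with an analysis within each row showing that exactly the first $\ell(\lambda,i,\br)$ reflections fall into the first case of \eqref{E:hwchange} and the remainder into the second. Your explicit tracking of the pairing value $c_j+2\lambda_i-s$ (and the verification that $c_j=0$ once $j>2\lambda_i$) is just a more computational phrasing of the paper's observation that, all coefficients being nonpositive, the pairing vanishes precisely when the $\epsilon_i$- and $\delta_j$-coefficients are both zero.
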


\begin{proof} 
We have already addressed the case that 
$\lambda$ is generic for $\br$, so suppose 
$\lambda$ is nongeneric for $\br$ and let $I:=I_{ {\lambda},\br}$.
It suffices to show that $\underline{\lambda}_{\br} = \underline{\lambda}_0 - r({\lambda},\br)$.  Note  that in $\underline{\lambda}_0=(2\lambda_1, \cdots, 2\lambda_m|\mu_1,\mu_1, \cdots, \mu_n,\mu_n)$, we have $\mu_k=0$ for all $k>\lambda_m$.

We proceed inductively from $m$ down to $1$, following the rows of the list of roots \eqref{E:star}, showing that if $\br_{j+1}$ denotes the Borel subalgebra obtained after completing all reflections up to the end of those in the row corresponding to  $\epsilon_{j+1}$, that
\begin{equation}\label{eq:induction}
\underline{\lambda}_{\br_{j+1}} = \underline{\lambda}_0 - \sum_{i=j+1}^m (\delta_1+\cdots+\delta_{\ell(\lambda, i, \br)} - \ell(\lambda, i, \br) \epsilon_{i}).
\end{equation}
The base case corresponds to $j=m$, $\br=\br^{\mathrm{op}}$, where there is nothing to show.

Suppose we have shown our inductive hypothesis up to $\epsilon_{j+1}$, as in \eqref{eq:induction}.  Let us first determine the coefficients of $\epsilon_j$ and of $\delta_k$ with $k>2\lambda_j$.

The coefficient of $\epsilon_{j}$ in $-\underline{\lambda}_{\br_{j+1}}$ is the same as that of $-\underline{\lambda}_0$; this value is therefore $2\lambda_j$.   Let $k>2\lambda_j \geq 2\lambda_m$.  Then the coefficient of $\delta_k$ in $\underline{\lambda}_0$ was zero.  Moreover, since we have  
$2\lambda_j \geq 2\lambda_i\geq \ell(\lambda, i,\br)$ for each $j+1\leq i \leq m$, none of the additional terms in \eqref{eq:induction} contribute to the coefficient of $\delta_k$.  Thus the coefficient of $\delta_k$ in $-\underline{\lambda}_{\br_{j+1}}$ is $0$ for $k>2\lambda_j$.

As we iterate through each of the roots
$$
\delta_1-\epsilon_j, \delta_2-\epsilon_j, \cdots, \delta_{\ell(\lambda, j, \br)}-\epsilon_j
$$
we fall into the first case of \eqref{E:hwchange} because the coefficient of $\epsilon_j$ is nonzero at each iteration.  Therefore if $\br'$ denotes the resulting Borel subalgebra, we have
\begin{equation}\label{E:stepind}
\underline{\lambda}_{\br'} = \underline{\lambda}_{\br_{j+1}} - (\delta_1+\cdots+\delta_{\ell(\lambda, j, \br)} - \ell(\lambda,j,\br) \epsilon_{j}).
\end{equation}
If $\ell(\lambda,j,\br)=\ell_{j,\br}$ (that is, if $j\leq I$) then we are done.
Otherwise, $\ell(\lambda,j,\br)=2\lambda_j$ and we infer that the coefficient of $\epsilon_j$ in $\underline{\lambda}_{\br'}$ is $0$, as is the coefficient of $\delta_{k}$ for any $k>\ell(\lambda,j,\br) = 2\lambda_j$. 
Thus for any remaining roots 
$$
\delta_{\ell(\lambda,j,\br)+1}-\epsilon_j, \cdots, \delta_{\ell_{j,\br}}-\epsilon_j
$$
we fall into the second case of \eqref{E:hwchange}, and so $\underline{\lambda}_{\br_j}=\underline{\lambda}_{\br'}$.  Therefore \eqref{eq:induction} and \eqref{E:stepind} together imply
$$
\underline{\lambda}_{\br_j}= \underline{\lambda}_0 - \sum_{i=j}^m (\delta_1+\cdots+\delta_{\ell(\lambda, i, \br)} - \ell(\lambda, i, \br) \epsilon_{i}),
$$
and we may conclude by induction that
\begin{equation}\label{E:difflambda}
\underline{\lambda}_{\br} = \underline{\lambda}_0 - \sum_{i=1}^m (\delta_1+\cdots+\delta_{\ell(\lambda, i, \br)} - \ell(\lambda, i, \br) \epsilon_{i}) = \underline{\lambda}_0-r({\lambda},\br),
\end{equation}
as required.
\end{proof}

\begin{remark}\label{Rem:coeffszero}
Note in particular that \eqref{E:difflambda} implies that the coefficient of $\epsilon_i$ in $\underline{\lambda}_\br$ is given by
$$
-(2\lambda_i - \ell(\lambda,i,\br)) = \begin{cases}
-(2\lambda_i - \ell_i) <0 & \text{if $\ell_i< 2\lambda_i$ ;}\\
0 & \text{if $\ell_i\geq 2\lambda_i$.}
\end{cases}
$$
The coefficient of $\epsilon_i$ is always $\leq 0$.
\end{remark}

\begin{cor}
\label{prp:generic-char}
Let $\br\in\mathcal B^d$ and $\lambda\in\mathscr H(m|n)$. The following statements are equivalent.
\begin{itemize}
\item[\rm(i)] $\lambda$ is $\br$-generic.
\item[\rm (ii)]
$2\lambda_m\geq \ell_{m,\br}$.
\item[\rm (iii)]  $2\lambda_i\geq \ell_{i,\br}$ for $1\leq i\leq m$.
\end{itemize}
\end{cor}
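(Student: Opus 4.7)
The plan is to deduce the corollary from Proposition~\ref{P:highestweightMN} together with the elementary monotonicity observations already recorded in the text. I would organize the argument as two independent equivalences: (i)$\Leftrightarrow$(iii) is essentially a rewriting of the definitions via the explicit description of $\mathcal{N}_\br$, while (ii)$\Leftrightarrow$(iii) is a consequence of the fact that $(\ell_{i,\br})_i$ is nondecreasing and $(2\lambda_i)_i$ is nonincreasing.

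For (i)$\Leftrightarrow$(iii), I would compare the list of roots \eqref{E:rlambdab} from Proposition~\ref{P:highestweightMN} with the list \eqref{E:star} defining $\mathcal{G}_\br$. The two lists agree row by row iff $\ell(\lambda,i,\br)=\ell_{i,\br}$ for every $i$. By Definition~\ref{defnimportantI}, $\ell(\lambda,i,\br)=\min\{\ell_{i,\br},2\lambda_i\}$, so the equality $\ell(\lambda,i,\br)=\ell_{i,\br}$ is equivalent to $\ell_{i,\br}\leq 2\lambda_i$. Since $\mathcal{N}_\br\subseteq \mathcal{G}_\br$ always, this shows $\mathcal{N}_\br=\mathcal{G}_\br$ (that is, $\lambda$ is $\br$-generic) if and only if (iii) holds.

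For (iii)$\Rightarrow$(ii), one simply takes $i=m$. For the converse, I invoke the monotonicity remarks preceding Definition~\ref{defnimportantI}: for a decreasing $\delta\epsilon$ sequence, $\epsilon_i$ lies to the right of $\epsilon_{i+1}$, so the count of $\delta$'s appearing to its right can only be smaller, i.e.\ $\ell_{i,\br}\leq \ell_{i+1,\br}$; and $2\lambda_i\geq 2\lambda_{i+1}$ because $\lambda$ is a partition. Chaining these for any $1\leq i\leq m$ gives
\[
\ell_{i,\br}\leq \ell_{m,\br}\leq 2\lambda_m\leq 2\lambda_i,
\]
which is (iii). There is no substantive obstacle here, as the corollary is genuinely a bookkeeping consequence of Proposition~\ref{P:highestweightMN}; the only point requiring care is keeping straight the direction of monotonicity of $(\ell_{i,\br})_i$, which is dictated by the convention that $\br\in\mathcal{B}^d$ has its $\epsilon$'s listed in decreasing order of index from left to right.
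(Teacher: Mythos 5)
Your proposal is correct and follows essentially the same route as the paper: both deduce (i)$\Leftrightarrow$(iii) from the explicit description of $\mathcal{N}_\br$ in Proposition~\ref{P:highestweightMN} via $\ell(\lambda,i,\br)=\min\{\ell_{i,\br},2\lambda_i\}$, and both get (ii)$\Leftrightarrow$(iii) from the opposite monotonicities of $(\ell_{i,\br})_i$ and $(2\lambda_i)_i$. Your write-up is just slightly more explicit about the row-by-row comparison of \eqref{E:rlambdab} with \eqref{E:star}.
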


\begin{proof}
Equivalence of (ii) and (iii) is an immediate consequence of the fact that the $\ell_{i,\br}$ are increasing whereas the $\lambda_i$ are decreasing. The equivalence of (i) and (ii) 
follows from Proposition~\ref{P:highestweightMN} and the fact that 
 if $2\lambda_i\geq \ell_{i,\br}$ for all $i$, then $r(\lambda,\br)=r_\br$ so $\lambda$ is generic for $\br$.
\end{proof}

For ease of reference, we list the roots contributing to $r_{\br}-r({\lambda},\br)$, that is, the subset $\mathcal{G}_\br\setminus \mathcal{N}_\br$  of the set of roots in \eqref{E:star}
which we will \emph{not} subtract from $\underline{\lambda}_0$ when computing the highest weight $\underline{\lambda}_{\br}$.  Using  the index $I=I_{\lambda,\br}$ for simplicity, it follows from \eqref{E:rlambdab} that they are
\begin{align}
\label{E:doublestar} \qquad &\delta_{2\lambda_m+1}-\epsilon_m, \cdots , \delta_{\ell_{m,\br}}-\epsilon_m, \\
&\delta_{2\lambda_{m-1}+1}-\epsilon_{m-1}, \cdots , \delta_{\ell_{m-1,\br}}-\epsilon_{m-1},\nonumber \\
&\cdots\nonumber\\
&\delta_{2\lambda_{I}+1}-\epsilon_I, \cdots , \delta_{\ell_{{I },\br}}-\epsilon_I\nonumber,
\end{align}

\subsection{Some necessary conditions}
Our goal is to define, for each $\br\in \mathcal{B}^d$, an affine map $\tau_\br \colon \h^* \to \C^{m|n}$ such that for all $\lambda \in \mathscr{H}(m|n)$ we have the equivalence
\begin{equation}\label{eq:whatwewant}
\tau_\br(\underline{\lambda}_{\br}) \sim \tau_0(\underline{\lambda}_0).
\end{equation}
As in Section~\ref{sec:CEPdiag}, this would imply that  
$P_{\lambda,1/2}\circ \tau_\br$ is the interpolation polynomial we seek. 
Such a map $\tau_\br$ will be given on $x\in\h^*$ by 
$$
\tau_\br(x) = M_\br x + X_\br,
$$ 
for some $(m+n)\times(m+2n)$ matrix $M_\br$ and vector $X_\br\in \C^{m|n}$.  In this section derive some necessary conditions that $\tau_\br$ must satisfy in order for the relation \eqref{eq:whatwewant} to hold.

\begin{lem} \label{E:XsimX0}
Let $\br\in \mathcal{B}^d$.  If \eqref{eq:whatwewant} holds, then we must have 
$$
X_\br \sim X_0.
$$
\end{lem}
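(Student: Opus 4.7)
The plan is to derive the necessary condition $X_\br \sim X_0$ by evaluating the assumed equivalence \eqref{eq:whatwewant} at a single carefully chosen partition, namely the empty partition $\lambda = (0, 0, \ldots)$, which clearly belongs to $\mathscr{H}(m|n)$.  For this choice, both $\underline{\lambda}_0$ and $\underline{\lambda}_\br$ will vanish, so \eqref{eq:whatwewant} reduces directly to an equivalence between the constant terms of the affine maps $\tau_\br$ and $\tau_0$.

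First I would verify that $\underline{\lambda}_0 = 0$ for the empty partition.  This is immediate from \eqref{eq:hwindecompm2n}: all $\lambda_i = 0$, and $\mu_j = \max\{0, \lambda_j' - m\} = 0$ for every $j$ since $\lambda_j' = 0$.  Next I would verify that $\underline{\lambda}_\br = 0$ as well, for any $\br \in \mathcal{B}^d$.  By Definition~\ref{defnimportantI}, we have $\ell(\lambda, i, \br) = \min\{\ell_{i,\br}, 2\lambda_i\} = \min\{\ell_{i,\br}, 0\} = 0$ for every $i$ (using $\ell_{i,\br} \geq 0$).  Hence the set $\mathcal{N}_\br$ displayed in Proposition~\ref{P:highestweightMN} is empty, so $r(\lambda, \br) = 0$ and $\underline{\lambda}_\br = \underline{\lambda}_0 - r(\lambda, \br) = 0$.

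Substituting $\underline{\lambda}_\br = \underline{\lambda}_0 = 0$ into \eqref{eq:whatwewant} then yields $\tau_\br(0) \sim \tau_0(0)$, which, since $\tau_\br(x) = M_\br x + X_\br$ and $\tau_0(x) = M_0 x + X_0$, is precisely the conclusion $X_\br \sim X_0$.  There is essentially no substantive obstacle: this lemma is the base case that pins down the affine offset $X_\br$ up to monoidal equivalence, while the more delicate task of characterizing the linear part $M_\br \in \mathcal{C}$ is deferred to subsequent lemmas which will exploit larger partitions (and in particular, generic $\lambda$ in the sense of Corollary~\ref{prp:generic-char}) to extract linear constraints.
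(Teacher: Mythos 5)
Your proposal is correct and is exactly the paper's argument: the paper likewise specializes \eqref{eq:whatwewant} to the trivial module, where $\underline{\lambda}_0=\underline{\lambda}_\br=0$, to conclude $X_\br\sim X_0$. Your additional verification that $\underline{\lambda}_\br=0$ via Proposition~\ref{P:highestweightMN} is a detail the paper leaves implicit.
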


\begin{proof}
Since $\underline{\lambda}_0=\underline{\lambda}_\br=0$ when $\lambda$ corresponds to the trivial module,  the requirement that 
$\tau_\br(\underline{\lambda}_{\br}) \sim \tau_0(\underline{\lambda}_0)$ for all $\lambda$ implies in particular that  
$X_{\br}\sim X_0$.
\end{proof}

\begin{prop}\label{Prop:equalgeneric}
Let $\br\in \mathcal{B}^d$.  If \eqref{eq:whatwewant} holds, then for all $\lambda\in \mathscr{H}(m|n)$  that are $\br$-generic we must have the equality
\begin{equation} \label{eq:equalgeneric}
M_\br\underline{\lambda}_{\br}+X_\br= M_0\underline{\lambda}_0+X_0.
\end{equation}
In consequence, we must have 
$$
M_\br \in \mathcal{C} \quad \text{and} \quad M_\br r_{\br} =X_\br-X_0.
$$
\end{prop}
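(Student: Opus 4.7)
The plan is to first establish the pointwise equality \eqref{eq:equalgeneric} on the $\br$-generic locus, and then extract the structural consequences from it by a density argument in $\mathfrak{a}^*$.

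By Proposition~\ref{P:highestweightMN}, for any $\br$-generic $\lambda$ one has $\underline{\lambda}_\br = \underline{\lambda}_0 - r_\br$, so the difference
\[
\phi(\underline{\lambda}_0) := \tau_\br(\underline{\lambda}_\br) - \tau_0(\underline{\lambda}_0) = (M_\br - M_0)\underline{\lambda}_0 + (X_\br - X_0 - M_\br r_\br)
\]
is an affine function of $\underline{\lambda}_0 \in \mathfrak{a}^*$ on the $\br$-generic locus, and it suffices to show that $\phi \equiv 0$ there.

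The main step promotes the hypothesized equivalence $\tau_\br(\underline{\lambda}_\br) \sim \tau_0(\underline{\lambda}_0)$ to an honest equality on this locus. Any realization of $\sim$ is a finite composition of separate symmetries (the action of $S_m \times S_n$ on coordinates) and monoidal reflections, and by Lemma~\ref{lem:equforf} a monoidal reflection at an index pair $(i,j)$ requires $x_i + \tfrac12 y_j = \pm\tfrac14$. For $\br$-generic $\lambda$ with all $\lambda_i$ and $\mu_j$ sufficiently large and pairwise distinct, the coordinates
\[
\tau_0(\underline{\lambda}_0) = \bigl(\lambda_1+(X_0)_1,\dots,\lambda_m+(X_0)_m\bigm|\mu_1+(X_0)_{m+1},\dots,\mu_n+(X_0)_{m+n}\bigr)
\]
grow without bound, stay bounded away from all such hyperplanes, and remain pairwise distinct --- a property which separate symmetries preserve. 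Hence no monoidal reflection can intervene in any realizing sequence, forcing $\tau_\br(\underline{\lambda}_\br) = \sigma\cdot\tau_0(\underline{\lambda}_0)$ for some $\sigma \in S_m \times S_n$. Since $S_m \times S_n$ is finite while the large-generic locus is Zariski-dense, a single $\sigma$ must work on a Zariski-open dense subset, and the resulting identity of affine functions extends to the entire $\br$-generic locus. To identify $\sigma$, I compare how each side responds to incrementing a single parameter $\lambda_i$ by one: since $\underline{\lambda}_0$ then changes by $-2\epsilon_i$ and $M_0$ has $-\tfrac12 I_m$ in its upper-left block, $\tau_0(\underline{\lambda}_0)$ gains a unit increment in its $i$-th $x$-coordinate only, and an analogous leading-order comparison in the $\mu_j$ increments forces $\sigma=\mathrm{id}$. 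Hence $\phi \equiv 0$ on the $\br$-generic locus.

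Given $\phi \equiv 0$, the affine identity
\[
(M_\br - M_0)\underline{\lambda}_0 = X_0 - X_\br + M_\br r_\br
\]
holds for every $\br$-generic $\underline{\lambda}_0 \in \mathfrak{a}^*$. By Corollary~\ref{prp:generic-char} the $\br$-generic condition $2\lambda_m \geq \ell_{m,\br}$ is a half-space in $\mathfrak{a}^*$, hence the $\br$-generic locus is Zariski-dense in $\mathfrak{a}^*$. Consequently the linear map $M_\br - M_0$ vanishes on $\mathfrak{a}^*$, which by Lemma~\ref{lem:Mform} is equivalent to $M_\br - M_0 \in \mathcal{K}$, that is, $M_\br \in \mathcal{C}$; the residual constant equation then reads $M_\br r_\br = X_\br - X_0$. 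I expect the main obstacle to be the first step --- promoting equivalence to equality --- which couples an asymptotic argument away from the monoidal hyperplanes with the leading-order identification $\sigma = \mathrm{id}$.
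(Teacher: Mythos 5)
Your overall strategy is the same as the paper's: for sufficiently large $\br$-generic $\lambda$ the point $\tau_0(\underline{\lambda}_0)$ avoids every hyperplane $x_i+\frac12 y_j=\pm\frac14$ (a condition preserved by separate symmetries), so its orbit under $\sim$ reduces to the $S_m\times S_n$-orbit; one then upgrades the equivalence to an affine identity on a Zariski-dense subset of $\mathfrak{a}^*$ and concludes via Lemma~\ref{lem:Mform}. Up to the point where a single permutation $\sigma$ is fixed on a dense subset, your argument is sound and tracks the paper's.

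The gap is in the step ``forces $\sigma=\mathrm{id}$.'' Incrementing $\lambda_i$ by one changes $\underline{\lambda}_0$ by $-2\epsilon_i$, so the right-hand side $\sigma\cdot\tau_0(\underline{\lambda}_0)$ gains $e_{\sigma(i)}$, while the left-hand side $\tau_\br(\underline{\lambda}_\br)=M_\br(\underline{\lambda}_0-r_\br)+X_\br$ gains $-2M_\br\epsilon_i$. Equating these gives $M_\br\epsilon_i=-\tfrac12 e_{\sigma(i)}$, which determines the $\epsilon$-columns of $M_\br$ \emph{in terms of} $\sigma$ but places no constraint on $\sigma$ itself, since $M_\br$ is the unknown; the same applies to the $\delta$-increments. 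Indeed, for any $\sigma\in S_m\times S_n$ the affine map $x\mapsto \sigma M_0x+\sigma X_0+\sigma M_0r_\br$ satisfies $\tau'(\underline{\lambda}_\br)=\sigma\tau_0(\underline{\lambda}_0)$ on the whole generic locus, so your increment comparison cannot distinguish $\sigma$ from the identity, and without $\sigma=\mathrm{id}$ you cannot conclude that $M_\br-M_0$ vanishes on $\mathfrak{a}^*$. The paper closes this step by a different device: it tests the equivalence against the one-parameter family of generic weights $s\underline{\lambda}_0$, $s\in\Z_{\geq 2}$, and uses $X_0\neq 0$ to exclude a nontrivial separate symmetry. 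To repair your version you need an input that genuinely constrains $\sigma$ --- for instance the constant terms (compare with Lemma~\ref{E:XsimX0}) or such a one-parameter family --- rather than the linear part alone.
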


\begin{proof}
By \eqref{E:X0rho}, the right hand side of \eqref{eq:whatwewant} is equal to $M_0(\underline{\lambda}_0+\rho_0)$.  On the other hand, the definition of genericity implies $\underline{\lambda}_\br=\underline{\lambda}_0-r_\br$ so the left hand side is
$$
M_\br\underline{\lambda}_\br + X_{\br} = M_\br\underline{\lambda}_0 + (-M_\br r_\br+X_\br).
$$
We first show that the equivalence of the two sides can only be achieved by equality.

Namely, recall that  
$\lambda$ is generic for a fixed $\br\in \mathcal{B}^d$ only if $2\lambda_i\geq \ell_{i,\br}$ for all $i$.  Therefore we may freely choose generic $\lambda$ sufficiently large such that the vector 
$M_0(\underline{\lambda}_0+\rho_0)$ does not lie on any hyperplane of the form $x_i+\frac12y_j=\pm1/4$, where a symmetry of the form $\sim$ holds (with $\theta=1/2$) and therefore no symmetry of the form \eqref{eq:mosys} can apply. 
 Similarly, the fact that $X_0 \neq 0$, and that equivalence must hold for the $\br$-generic weights $s\underline{\lambda}_0$  for every $s\in \mathbb{Z}_{\geq 2}$, excludes the possibility that the equivalence is due to separate symmetry of the variables. Therefore, for all sufficiently large $\br$-generic $\lambda$, the equivalence \eqref{eq:whatwewant} must occur as an equality, that is, we have
$$
M_\br \underline{\lambda}_0 + (-M_\br r_\br+X_\br) = M_0\underline{\lambda}_0+X_0.
$$
Since
$$
\text{span}\{\underline{\lambda}_0\in \Omega_{m|2n} \mid \lambda \text{ is generic for $\br$ and } \underline{\lambda}_0 \gg 0\} = \mathfrak{a}^*,
$$
we deduce that this equality is in fact an equality of affine functions on $\mathfrak{a}^*$.  Thus we have $X_0=-M_\br r_\br +X_\br$ and, by Lemma~\ref{lem:Mform}, we conclude $M_\br  \in \mathcal{C}$.
\end{proof}

\section{The  refined  CEP for $\pr{\gl(V),\s^2\left(V\right)}$}\label{sec:cpeglm2n}

We develop our solution to the refined CEP in stages of increasing complexity of the Borel subalgebra.

\subsection{The very even case}  \label{SS:veryeven}  We begin with the set of Borel subalgebras $\br$ that have the property that $\underline{\lambda}_\br\in \mathfrak{a}^*$ for all $\lambda \in \mathscr{H}(m|n)$ : the ones for which for all $k$, the terms $\delta_{2k}\delta_{2k-1}$ are adjacent in the $\delta\epsilon$ sequence. 

\begin{defn}\label{defn:veryeven}
A decreasing Borel subalgebra $\br$ and its associated $\delta\epsilon$ sequence are called \emph{very even} if $\ell_{i,\br}$ is even for all $1\leq i\leq m$; equivalently, if $j_{2k-1}=j_{2k}$ for all $1\leq k \leq n$.
\end{defn}

Suppose $\br$ is very even.  For any $\lambda \in \mathscr{H}(m|n)$, the value $\ell(\lambda,i,\br) = \min\{\ell_{i,\br},2\lambda_i\}$ will also be even.  
Since 
$\mathfrak{a}^* = \text{span}\{ \delta_{2k-1}+\delta_{2k},2\epsilon_i \mid 1\leq k \leq n, 1\leq i\leq m\}$, it is immediate that the roots comprising $r(\lambda,\br)$  can be partitioned into pairs whose sums lie  in this span, and so by Proposition~\ref{P:highestweightMN} we have $\underline{\lambda}_\br \in \mathfrak{a}^*$.
Henceforth we write $\rho_\br$ for the Weyl vector of $\br$. 

\begin{lem}\label{L:Xbr}
Suppose $\br$ is very even and $\tau_\br \colon \h^*\to \C^{m|n}$ is an affine map satisfying \eqref{eq:whatwewant}, in the form $\tau_\br(x)=Mx+X_\br$ for all $x\in \h^*$.   Then $X_\br = M_0\rho_\br$ 
where $M_0$ is as in \eqref{E:M0}, and in fact, $\tau_\br$ is independent of the choice of $M\in \mathcal{C}$.
\end{lem}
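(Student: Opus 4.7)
The plan is to exploit the necessary conditions of Proposition~\ref{Prop:equalgeneric} together with the structural feature that, in the very-even case, both $r_\br$ and each $r(\lambda,\br)$ land in the subspace $\mathfrak{a}^*$ on which every matrix in $\mathcal{C}$ acts as $M_0$.

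First I would invoke Proposition~\ref{Prop:equalgeneric}: since $\tau_\br(x)=Mx+X_\br$ satisfies~\eqref{eq:whatwewant}, one automatically has $M\in\mathcal{C}$ and
\[
Mr_\br \;=\; X_\br - X_0 \;=\; X_\br - M_0\rho_0,
\]
using $X_0=M_0\rho_0$ from~\eqref{E:X0rho}. The key observation is that in the very-even case $r_\br\in\mathfrak{a}^*$: indeed, Lemma~\ref{L:rbr} combined with $j_{2k-1,\br}=j_{2k,\br}$ permits rewriting the $\delta$-part of $r_\br$ as $\sum_{k=1}^n j_{2k-1,\br}(\delta_{2k-1}+\delta_{2k})$. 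Since every $M\in\mathcal{C}$ agrees with $M_0$ on $\mathfrak{a}^*$ (transparent from the definition of $\mathcal{C}$ and the proof of Lemma~\ref{lem:Mform}), this gives $Mr_\br=M_0r_\br$. Iterating the change-of-Weyl-vector formula $\rho_{\br''}=\rho_{\br'}+\alpha$ under the simple odd reflections in $\mathcal{G}_\br$ (see \cite[Prop.~1.33]{cheng_wang_2013}) yields $\rho_\br=\rho_0+r_\br$, so plugging back gives $X_\br=M_0(\rho_0+r_\br)=M_0\rho_\br$, as claimed.

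For the independence statement, I would show that $\underline{\lambda}_\br\in\mathfrak{a}^*$ for every $\lambda\in\mathscr{H}(m|n)$; granting this, $M\underline{\lambda}_\br=M_0\underline{\lambda}_\br$ for all $M\in\mathcal{C}$, hence $\tau_\br(\underline{\lambda}_\br)=M_0\underline{\lambda}_\br+M_0\rho_\br$ is independent of the choice of $M$. By Proposition~\ref{P:highestweightMN} we have $\underline{\lambda}_\br=\underline{\lambda}_0-r(\lambda,\br)$ with $\underline{\lambda}_0\in\mathfrak{a}^*$, so it suffices to check $r(\lambda,\br)\in\mathfrak{a}^*$. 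In the very-even case $\ell(\lambda,i,\br)=\min\{\ell_{i,\br},2\lambda_i\}$ is even for every $i$, and the $i$-th row of the list~\eqref{E:rlambdab} then groups into consecutive pairs $(\delta_{2k-1}-\epsilon_i)+(\delta_{2k}-\epsilon_i)=(\delta_{2k-1}+\delta_{2k})-2\epsilon_i\in\mathfrak{a}^*$. The only real obstacle is recognising that the very-even condition is \emph{precisely} what is needed to force $r_\br$ and each $r(\lambda,\br)$ into $\mathfrak{a}^*$; once this is spotted, the remainder is a routine assembly of Lemmas~\ref{L:rbr} and~\ref{lem:Mform} with Propositions~\ref{P:highestweightMN} and~\ref{Prop:equalgeneric}.
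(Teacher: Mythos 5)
Your proposal is correct and follows essentially the same route as the paper: invoke Proposition~\ref{Prop:equalgeneric} to get $M\in\mathcal{C}$ and $Mr_\br=X_\br-X_0$, use the very-even condition to place $r_\br$ and every $\underline{\lambda}_\br$ in $\mathfrak{a}^*$ (where all elements of $\mathcal{C}$ agree with $M_0$), and conclude via $\rho_\br=\rho_0+r_\br$. The only difference is that you spell out explicitly why $r_\br,\,r(\lambda,\br)\in\mathfrak{a}^*$, which the paper establishes in the paragraph preceding the lemma rather than inside the proof.
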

Note that since $\rho_\br\notin \mathfrak{a}^*$, it is not true that $M\rho_\br = X_\br$ for all $M\in \mathcal{C}$.

\begin{proof}
Suppose $\br$ is very even and suppose $\tau_\br(x)=Mx+X$ is an affine function satisfying \eqref{eq:whatwewant}.  By Proposition~\ref{Prop:equalgeneric}, we must have $M\in\mathcal{C}$ and $X = Mr_\br+X_0$.  If $M' \in \mathcal{C}$, then $M-M'$ is zero on $\mathfrak{a}^*$.  Thus $M\underline{\lambda}_\br = M'\underline{\lambda}_\br$ for every $\lambda\in \mathscr{H}(m|n)$ and $M'r_\br=Mr_\br$, so if one choice of $M\in\mathcal{C}$ gives a function satisfying \eqref{eq:whatwewant}, so do all.  Further, since by \cite[Prop 1.33]{cheng_wang_2013} we have $\rho_\br = \rho_0+r_\br$, we readily compute $M_0\rho_\br = M_0(\rho_0+r_\br) = X_0 +M_0r_\br=X_\br$.  
\end{proof}

The rest of this subsection is devoted to proving the converse, that is, that
for all very even $\br$,
the affine functions
$
\tau_\br(x)=Mx+M_0\rho_\br
$ for $M\in\mathcal C$ 
 satisfy~\eqref{eq:whatwewant}. We first prove our key technical result, which gives a collection of terms that are  equivalent to $\tau_0(\underline{\lambda}_0)$.  By Lemma~\ref{L:Xbr}, 
 it suffices to prove the result for $M=M_0$.

\begin{prop}\label{P:equiveven}
Suppose $\br$ is very even and that $\underline{\lambda}_0\in \Omega_{m|2n}$.  Then  we have
$$
M_0\underline{\lambda}_0+X_0 \sim M_0(\underline{\lambda}_0+r_{\br}-r(\lambda,\br))+X_0.
$$
\end{prop}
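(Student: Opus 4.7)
The plan is to realize the target displacement $M_0(r_\br - r(\lambda,\br))$ as the cumulative shift produced by an explicit sequence of monoidal symmetries (Lemma~\ref{lem:equforf} with $\theta=1/2$) applied to $M_0\underline{\lambda}_0 + X_0$.  Since $\br$ is very even, $\ell_{i,\br}$ is even for all $i$, so the roots in $\mathcal{G}_\br \setminus \mathcal{N}_\br$ listed in \eqref{E:doublestar} can be grouped row by row into pairs $(\delta_{2k-1}+\delta_{2k}) - 2\epsilon_i$, indexed by
\[
S := \{(i,k) : I_{\lambda,\br} \leq i \leq m,\ \lambda_i < k \leq \ell_{i,\br}/2\}.
\]
Applying $M_0$ sends each such pair to the integer vector $e_i - e_{m+k}$, so it suffices to add $\sum_{(i,k) \in S}(e_i - e_{m+k})$ one summand at a time via monoidal jumps; by Lemma~\ref{lem:equforf}, a jump adding $e_i - e_{m+k}$ is available exactly when the current point $(x_1,\ldots,x_m,y_1,\ldots,y_n)$ satisfies $x_i + \tfrac12 y_k = -\tfrac14$.

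Next, I would process the pairs in $S$ in lexicographic order with $i$ decreasing from $m$ down to $I_{\lambda,\br}$ and, within each row, $k$ increasing from $\lambda_i + 1$ to $\ell_{i,\br}/2$.  A direct accounting shows that at the moment of processing $(i,k)$, the coordinate $x_i$ has been incremented exactly $k - \lambda_i - 1$ times (once per earlier jump in row $i$), while $y_k$ has been decremented exactly $N(i,k) := |\{i' : i < i' \leq m,\ (i',k) \in S\}|$ times.  Two clean facts reduce the legality check to an algebraic identity: (a) for every $(i,k) \in S$ one has $k > \lambda_i \geq \lambda_m \geq \lambda_{m+1}$, forcing $\lambda'_k \leq m$ and hence $\mu_k = 0$; and (b) for any $i' > i$, the monotonicity of $\lambda$ and $\ell_{\cdot,\br}$ together with the very even hypothesis (which upgrades $\ell_{i,\br} > 2\lambda_i$ to $\ell_{i,\br} \geq 2\lambda_i + 2$) gives $\lambda_{i'} \leq \lambda_i < k$ and $\ell_{i',\br}/2 \geq \ell_{i,\br}/2 \geq k$, so $(i',k) \in S$ automatically and $N(i,k) = m - i$.

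Substituting these identities together with the explicit formulas for $x_i^0$ and $y_k^0$ derived from \eqref{eq:hwindecompm2n} and \eqref{eq:glm2nstd} into the expression for $x_i + \tfrac12 y_k$ simplifies to $-\tfrac14$, confirming that each jump in the chosen order is legal; induction on the number of completed jumps then delivers the required equivalence.  The main obstacle is the combinatorial bookkeeping: identifying the correct jump order and verifying that both $\mu_k = 0$ and $N(i,k) = m - i$ hold at every step.  Both identities rely essentially on $\br$ being very even --- the former through the compatibility of the hook shape with the row pairing, the latter through the gap $\ell_{i,\br} \geq 2\lambda_i + 2$ propagating to all $i' > i$ --- and without this hypothesis neither the pairing of roots nor the alignment of $N(i,k)$ would go through cleanly.
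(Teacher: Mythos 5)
Your proposal is correct and follows essentially the same route as the paper's proof: you process the rows of \eqref{E:doublestar} with $i$ decreasing from $m$ to $I_{\lambda,\br}$ and $k$ increasing within each row, track that $x_i$ has been incremented $k-\lambda_i-1$ times and $y_k$ decremented $m-i$ times (using $\mu_k=0$ and the nesting of the intervals $(\lambda_i,\ell_{i,\br}/2]$), and verify $x_i+\tfrac12 y_k=-\tfrac14$ so that Lemma~\ref{lem:equforf} applies at each step. This is exactly the paper's double induction, with your $N(i,k)=m-i$ matching the paper's inductive claim that the coefficient of $e_{m+j}$ is $F_j-(m-i)$.
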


For ease of reference in the proof, for all $1\le i\le m$ and $1\le j\le n$, let 
\begin{equation}\label{EandFglm2n}
E_i \coloneqq\frac{m+1-2n-2i}{4} \mbox{ and }F_k \coloneqq\frac{m+2+2n-4k}{2}.
\end{equation} 
Then we have $X_0 = M_0\rho_0=\sum_{i=1}^m E_i e_i + \sum_{k=1}^n F_k e_{m+k}$.

\begin{proof}
Suppose $\br$ is very even and let $I$ be the least index for which $\ell_{I,\br}>2\lambda_I$; then we must have $\ell_{I,\br} \geq 2\lambda_I+2$.   Considering moreover the list of roots occuring in \eqref{E:doublestar}, whose sum is $r_\br -r(\lambda,\br)$, we note that 
$$
\lambda_m+1 \leq \lambda_{m-1}+1 \leq \cdots \leq \lambda_I+1 \leq \frac12\ell_{I,\br} \leq \frac12 \ell_{I+1,\br} \leq \cdots \leq \frac12 \ell_{m,\br}.
$$
That is, the intervals 
$(\lambda_i+1, \ell_{i,\br}/2)$ are nested, as $i$ decreases.

To prove the proposition, we proceed by induction on $i$ from $m$ down to $I$ on the rows of \eqref{E:doublestar}.  Let
$$
r_{\br,\lambda,j} = \left( \sum_{k=2\lambda_{j}+1}^{\ell_{j,\br}} \delta_{k}\right) - (\ell_{j,\br}-2\lambda_{j})\epsilon_{j}
$$
denote the sum of the roots on the row of \eqref{E:doublestar} corresponding to $\epsilon_j$.  Set
$$
\Lambda_{i+1} = M_0(\underline{\lambda}_0+\sum_{j=i+1}^m r_{\br,\lambda,j})+X_0,
$$
which is the partial sum obtained after completing the row indexed by $i+1$.

We will show that we can transform $\Lambda_{m+1}:=M_0\underline{\lambda}_0 + X_0$, by a sequence of monoidal symmetries as in Lemma~\ref{lem:equforf},
into the final result
$$
\Lambda_{I} = M_0(\underline{\lambda}_0+r_{\br}-r(\lambda,\br)) +X_0.
$$
Our inductive hypothesis is that 
$$\Lambda_{i+1} \sim \Lambda_i,$$  that 
the coefficient of $e_i$ in $\Lambda_{i+1}$ is $\lambda_{i} + E_i$ and that
the coefficients of $e_{m+j}$ in $\Lambda_{i+1}$ with $\lambda_{i}+1 \leq j \leq \frac12\ell_{i,\br}$ are given by $F_j - (m-i)$.

The base case is that $i=m$, so equivalence is a tautology and the statement about the coefficients is true.
Suppose it holds for $i+1$.  
Let us prove the statement for $i$.   Note that 
\begin{align*}
\Lambda_{i} &= \Lambda_{i+1} + M_0r_{\br,\lambda,i}\\
&= \Lambda_{i+1} + M_0\left(\pr{\sum_{t=2\lambda_{i}+1}^{\ell_{i,\br}} \delta_{t}} - (\ell_{i,\br}-2\lambda_{i})\epsilon_{i}\right)\\
&= \Lambda_{i+1} + M_0\left(\sum_{k=\lambda_{i}+1}^{\ell_{i,\br}/2} (\delta_{2k-1} + \delta_{2k} - 2\epsilon_{i})\right)\\
&= \Lambda_{i+1} + \sum_{k=\lambda_i+1}^{\ell_{i,\br}/2}(e_i-e_{m+k}).
\end{align*}
Suppose we have shown for some $K$, $\lambda_i+1\leq K <\ell_i/2$, that $\Lambda_{i+1}$ is  monoidally equivalent to
$$
\Lambda_{i+1,K-1}= \Lambda_{i+1} + \sum_{k=\lambda_i+1}^{K-1} (e_i-e_{m+k}).
$$
We now show that $\Lambda_{i+1,K-1}\sim \Lambda_{i+1,K}$. Then by induction as $K$ runs from $\lambda_i+1$ to $\ell_{i,\br}/2$, we will conclude that $\Lambda_i\sim\Lambda_{i+1}.$

Using the outer induction hypothesis, as well as the formula for $\Lambda_{i+1,K-1}$, we compute that the coefficient of $e_i$ in $\Lambda_{i+1,K-1}$ is $\lambda_i+E_i+(K-\lambda_i-1)$ and the coefficient of $e_{m+K}$ in $\Lambda_{i+1,K-1}$ is $F_K-(m-i)$.
Since in this case we have $\theta=1/2$, Lemma~\ref{lem:equforf} says $y\sim y+e_i-e_{m+k}$ if $y_i+\frac12 y_{m+k}=-\frac14$.
Using \eqref{EandFglm2n}, we compute
$$
\lambda_i+E_i+\pr{K-\lambda_i-1} + \frac12\pr{F_K-(m-i))} = -\frac14
$$
Therefore monoidal symmetry may be applied and we conclude that
$$
\Lambda_{i+1,K-1} \sim \Lambda_{i+1,K-1} + e_{i} - e_{m+K} = \Lambda_{i+1,K},
$$
as required.  Therefore we have deduced by induction that
$$
\Lambda_{i+1}\sim \Lambda_{i+1,\ell_i/2} = \Lambda_{i}.
$$
Finally, if $i-1 \geq I$, we compute the coefficients of $\Lambda_i$, to complete the outer induction.

The coefficient of $e_{i-1}$ in $\Lambda_i$ is the same as the coefficient of $e_{i-1}$ in $M_0\underline{\lambda}_0+X_0$, which is $\lambda_{i-1}+E_{i-1}$, because we have added no roots involving $\epsilon_{i-1}$.

Since $i-1\geq I$ and $\ell_{i-1,\br}$ is even, we have $\lambda_i +1\leq \lambda_{i-1}+1\leq \frac12\ell_{i-1,\br} \leq \frac12\ell_{i,\br}$.  Therefore the coefficient of $e_{m+j}$, for all $\lambda_{i-1}+1\leq j \leq \frac12\ell_{i-1,\br}$ was $F_j-(m-i)$ in $\Lambda_{i+1}$ and is thus
$F_j-(m-i)-1$ in $\Lambda_i$, since we subtracted each $e_{m+j}$ exactly once in the course of computing $\Lambda_i$. The induction is complete.
\end{proof}

We may now prove our main theorem for very even Borel subalgebras.

\begin{theo}\label{T:veryeven}
Let $\br$ be a very even Borel subalgebra in $\mathcal{B}^d$.  Set $X_\br = M_0\rho_\br$ and let $M_\br \in \mathcal{C}$.  Let $\tau_\br$ be the affine function defined on $x\in \h^*$ by
$$
\tau_\br(x) = M_\br x + X_\br.
$$
Then for every $\lambda\in \mathscr{H}(m|n)$ we have $\tau_\br(\underline{\lambda}_\br)\sim \tau_0(\underline{\lambda}_0)$ so that 
\[P_{\mu,\frac{1}{2}}\circ\tau_\br\pr{\underline{\lambda}_{\br}}=P_{\mu,\frac{1}{2}}\circ\tau_{0}\pr{\underline{\lambda}_{0}}.\]
Consequently, the eigenvalue of the Capelli operator  $D^{\mu}$ on the irreducible component $W_\lambda$ is given by the evaluation of a polynomial in the highest weight $\underline{\lambda}_\br$ with respect to $\br$. 
\end{theo}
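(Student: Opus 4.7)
The plan is to reduce the claim to the key equivalence already established in Proposition~\ref{P:equiveven}, via a direct computation exploiting the relation $\rho_\br=\rho_0+r_\br$ from \cite[Prop 1.33]{cheng_wang_2013} and the very even hypothesis.

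First I would dispose of the freedom in the choice of $M_\br\in\mathcal{C}$. Since $\br$ is very even, Proposition~\ref{P:highestweightMN} and the discussion preceding Lemma~\ref{L:Xbr} show that $\underline{\lambda}_\br\in\mathfrak{a}^*$ for every $\lambda\in\mathscr{H}(m|n)$. But any two elements of $\mathcal{C}$ differ by a matrix in the space $\mathcal{K}$ of Lemma~\ref{lem:Mform}, which vanishes on $\mathfrak{a}^*$. Hence $M_\br\underline{\lambda}_\br=M_0\underline{\lambda}_\br$, and so it suffices to establish the theorem for the single choice $M_\br=M_0$.

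Next I would carry out the main computation. By Proposition~\ref{P:highestweightMN}, $\underline{\lambda}_\br=\underline{\lambda}_0-r(\lambda,\br)$, and by the identity $\rho_\br=\rho_0+r_\br$, we have
\begin{align*}
\tau_\br(\underline{\lambda}_\br)
&= M_0\underline{\lambda}_\br + X_\br
= M_0\bigl(\underline{\lambda}_0-r(\lambda,\br)\bigr) + M_0\rho_\br \\
&= M_0\bigl(\underline{\lambda}_0+r_\br-r(\lambda,\br)\bigr) + M_0\rho_0
= M_0\bigl(\underline{\lambda}_0+r_\br-r(\lambda,\br)\bigr) + X_0.
\end{align*}
Applying Proposition~\ref{P:equiveven} then yields
\[
\tau_\br(\underline{\lambda}_\br)\sim M_0\underline{\lambda}_0+X_0=\tau_0(\underline{\lambda}_0),
\]
which is precisely condition~\eqref{eq:whatwewant}.

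Finally, since $P_{\mu,1/2}\in\Lambda_{m,n,1/2}$, it is invariant under the equivalence $\sim$, so $P_{\mu,\frac12}\circ\tau_\br(\underline{\lambda}_\br)=P_{\mu,\frac12}\circ\tau_0(\underline{\lambda}_0)$. Combining this with Theorem~\ref{hadi_theo_2}, which identifies the right-hand side with the eigenvalue of $D^\mu$ on $W_\lambda$, gives the polynomiality statement for the Capelli eigenvalue in terms of $\underline{\lambda}_\br$.

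All the genuine combinatorial work was handled in Proposition~\ref{P:equiveven}; so the main obstacle was already overcome there, and the remaining task here is essentially bookkeeping. The one subtle point worth flagging is that although $M_0\rho_\br=X_\br$ pins down $X_\br$, it is \emph{not} true that $M\rho_\br=X_\br$ for arbitrary $M\in\mathcal{C}$ (since $\rho_\br\notin\mathfrak{a}^*$); this is why the independence from the choice of $M\in\mathcal{C}$ must be checked on the pairing with $\underline{\lambda}_\br$ rather than on $\rho_\br$, as in the opening step above.
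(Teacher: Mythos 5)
Your proof is correct and follows essentially the same route as the paper's: both reduce the equivalence to the identity $\tau_\br(\underline{\lambda}_\br)=M_0(\underline{\lambda}_0+r_\br-r(\lambda,\br))+X_0$ (using $\rho_\br=\rho_0+r_\br$ and the fact that matrices in $\mathcal{C}$ agree on $\mathfrak{a}^*$) and then invoke Proposition~\ref{P:equiveven}, with the final step supplied by Theorem~\ref{hadi_theo_2}. The only cosmetic difference is that you normalize to $M_\br=M_0$ at the outset via $\underline{\lambda}_\br\in\mathfrak{a}^*$, whereas the paper splits the expression and substitutes $M_0$ only in the term lying in $\mathfrak{a}^*$, citing Lemma~\ref{L:Xbr} for the constant part.
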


\begin{proof} 
It suffices to show  $\tau_\br(\underline{\lambda}_{\br}) \sim \tau_0(\underline{\lambda}_0)$ for all $\lambda \in \mathscr{H}(m|n)$. We have that 
\begin{align*}
M_\br\underline{\lambda}_{\br} + X_\br &= M_\br(\underline{\lambda}_0 - r(\lambda,\br))+X_\br\\
&= M_\br(\underline{\lambda}_0 + r_{\br} - r(\lambda,\br))+ (-M_\br r_{\br}+X_\br).
\end{align*}
Since $\underline{\lambda}_0 + r_{\br} - r(\lambda,\br)\in \mathfrak{a}^*$, we may replace $M_\br$ by $M_0$ in the first term, and as noted in Lemma~\ref{L:Xbr} the latter term is equal to $X_0$.  Therefore we have
$$
M_\br\underline{\lambda}_{\br} + X_\br = M_0(\underline{\lambda}_0 + r_{\br}-r(\lambda,\br))+X_0,
$$
and thus by applying Proposition~\ref{P:equiveven} we conclude that $M_\br\underline{\lambda}_{\br} + X_\br\sim M_0\underline{\lambda}_0+X_0$, which completes the proof.
\end{proof}

\subsection{A solution for generic weights} \label{SS:generic} 
From now on we suppose that $\br\in \mathcal{B}^d$ is not very even.  We first characterize the non-adjacent pairs $\{\delta_{2k}, \delta_{2k-1}\}$ in the $\delta\epsilon$ sequence corresponding to $\br$.

\begin{defn}
For each $1\leq k \leq n$, let 
\begin{align*}
T_\br&= \{ 1\leq k \leq n \mid j_{2k,\br}<j_{2k-1,\br} \}\\
&=  \{ 1\leq k \leq n\mid \exists 1\leq i \leq m : \ell_{i,\br}=2k-1\}.
\end{align*}
\end{defn}

One can infer the equality of the given sets from the combinatorial description of $\delta\epsilon$ sequences in Definition~\ref{D:ellj}.

\begin{lem}\label{def:be}
Define $\br_e\in \mathcal{B}^d$ by the values 
$$
\ell_{i,\br_e}= 2 \left\lfloor \frac12 \ell_{i,\br} \right\rfloor
$$
for all $1\leq i \leq m$.  Then $\br_e$ is a very even Borel subalgebra and the sequence of odd reflections $r_\alpha$ that take $\br_e$ to $\br$ corresponds to the set of roots $\{\delta_{2k-1}-\epsilon_i \mid k \in T_\br, \ell_{i,\br}=2k-1\}$.
\end{lem}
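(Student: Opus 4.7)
The plan proceeds in three steps: verify $\br_e$ is a well-defined very even decreasing Borel subalgebra, identify the symmetric difference of the positive isotropic roots of $\br_e$ and $\br$, and construct an explicit chain of simple odd reflections between them. For the first step, since $(\ell_{i,\br})_{i=1}^m$ is weakly increasing with values in $\{0,1,\ldots,2n\}$, the sequence $(\ell_{i,\br_e})_i=(2\lfloor\ell_{i,\br}/2\rfloor)_i$ is also weakly increasing and takes values in $\{0,2,\ldots,2n\}$, so by the parametrization of $\mathcal{B}^d$ recalled in Section~\ref{subsec:decreasingb} it uniquely determines $\br_e\in\mathcal{B}^d$; by construction all $\ell_{i,\br_e}$ are even, so $\br_e$ is very even.

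For the second step, I would use the basic fact that in any $\br'\in\mathcal{B}^d$, $\delta_j$ lies to the right of $\epsilon_i$ iff $\ell_{i,\br'}\geq j$. A short parity case analysis then shows that the positions of $\delta_{2k}$ relative to each $\epsilon_i$ agree in $\br_e$ and $\br$ (both conditions reduce to $\ell_{i,\br}\geq 2k$), while those of $\delta_{2k-1}$ differ exactly when $\ell_{i,\br}=2k-1$. Hence the symmetric difference of the positive isotropic roots of $\br_e$ and $\br$ is precisely $\{\delta_{2k-1}-\epsilon_i \mid k\in T_\br,\ \ell_{i,\br}=2k-1\}$.

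For the third step, for each $k\in T_\br$ the indices $\{i\mid \ell_{i,\br}=2k-1\}$ form a contiguous interval $[i_1^{(k)},i_2^{(k)}]$ by monotonicity of $\ell_{\cdot,\br}$. The key structural claim is that in the $\delta\epsilon$-sequence of $\br_e$, the elements $\delta_{2k-1},\epsilon_{i_2^{(k)}},\epsilon_{i_2^{(k)}-1},\ldots,\epsilon_{i_1^{(k)}}$ appear consecutively in this order. Indeed, between any two adjacent elements of this block no other $\delta_j$ can intervene (the inequalities $j\leq 2k-2$ and $j\geq 2k-1$ cannot be simultaneously satisfied), and no other $\epsilon_{i'}$ can either: by maximality of $i_2^{(k)}$ any $i'>i_2^{(k)}$ has $\ell_{i',\br_e}\geq 2k$, placing $\delta_{2k-1}$ to the right of $\epsilon_{i'}$, while any $i'<i_1^{(k)}$ lies to the right of the block. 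Applying $r_{\delta_{2k-1}-\epsilon_i}$ for $i=i_2^{(k)},\ldots,i_1^{(k)}$ in that order then sweeps $\delta_{2k-1}$ rightward through the block via legitimate simple odd reflections, each of which acts on a currently adjacent pair. Since reflections for distinct $k$ involve disjoint $\delta$s and disjoint $\epsilon$-intervals, the mini-chains can be concatenated in any order, and a final count of $\ell$-values confirms the resulting Borel is $\br$. The principal obstacle is the adjacency verification here, which rests crucially on the evenness of all $\ell_{i,\br_e}$ together with the monotonicity of $\ell_{\cdot,\br}$.
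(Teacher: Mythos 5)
Your proposal is correct and follows essentially the same route as the paper: identify $\br_e$ via its (even, weakly increasing) $\ell$-sequence, and then realize the passage from $\br_e$ to $\br$ by sweeping each $\delta_{2k-1}$ (for $k\in T_\br$) rightward past the contiguous block of $\epsilon_i$ with $\ell_{i,\br}=2k-1$ via simple odd reflections. The paper states this shift more tersely, while you supply the adjacency and contiguity verifications explicitly; these checks are accurate and the argument is complete.
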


\begin{proof}
Since $\br$ is a decreasing Borel subalgebra if and only if the sequence $(\ell_{i,\br})_{i=1}^m$ is increasing (and bounded by $2n$), it follows that the increasing sequence $(\ell_{i,\br_e})_{i=1}^m$ uniquely defines a decreasing Borel subalgebra $\br_e$, and that it is very even.  
In the $\delta\epsilon$ sequence of $\br_e$, the terms $\delta_{2k-1}\delta_{2k}$ are adjacent.  For each $k\in T_\br$ and $i$ such that $\ell_{i,\br}=2k-1$, we have $\ell_{i,\br_e}=2k-2$, but for all other $i$, $\ell_{i,\br}$ must be even and therefore $\ell_{i,\br}=\ell_{i,\br_e}$.  Thus to obtain the $\delta\epsilon$ sequence of $\br$ one must, for each $k\in T_\br$, shift $\delta_{2k-1}$ to the right using the reflections corresponding to $\{\delta_{2k-1}-\epsilon_i \mid  \ell_{i,\br}=2k-1\}$.
\end{proof}

Consequently, if we define for each $k\in T_\br$
\begin{equation}\label{eq:rtakingbetob}
r_{odd,\br,k} := (j_{2k-1}-j_{2k}) \delta_{2k-1} - \sum_{i= m-j_{2k-1}+1}^{m-j_{2k}}\epsilon_i,
\end{equation}
then we have
\begin{equation}\label{eq:rbrassum}
r_\br = r_{\br_e} + \sum_{k\in T_\br} r_{odd,\br,k}.
\end{equation}

\begin{defn}\label{D:Cbfirst}
Let $\br\in \mathcal{B}^d$.  Set
$$
\mathcal{C}_\br = \{ M\in \mathcal{C} \mid \forall k\in T_\br, \;Mr_{odd,\br,k}=0\}.
$$
\end{defn}

\begin{lem}\label{lem:nonempty}
The set $\mathcal{C}_\br$ is nonempty. 
\end{lem}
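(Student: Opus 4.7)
My plan is to construct an explicit element of $\mathcal{C}_\br$ by solving a decoupled linear system for the columns of the free matrix $A$.

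First, I would expand the condition $Mr_{odd,\br,k}=0$ for an arbitrary $M = M_0 + \mat{0_{(m+n)\times m} & A}$ in $\mathcal{C}$. Reading off \eqref{eq:rtakingbetob}, the vector $r_{odd,\br,k}$ has its $\delta$-part supported on a single index: it equals $(j_{2k-1}-j_{2k})\delta_{2k-1}$. Since the block $\mat{0_{(m+n)\times m} & A}$ kills all $\epsilon$-coordinates and applies $A$ to the $\delta$-coordinates, extracting only the $(2k-1)$-th column $A_{2k-1}$ of $A$, we obtain
$$Mr_{odd,\br,k} \;=\; M_0\,r_{odd,\br,k} \;+\; (j_{2k-1}-j_{2k})\,A_{2k-1}.$$

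Next, because $k\in T_\br$ means $j_{2k-1}>j_{2k}$, the scalar $j_{2k-1}-j_{2k}$ is a positive integer, so the equation $Mr_{odd,\br,k}=0$ is uniquely solved in the $(2k-1)$-th column by
$$A_{2k-1} \;=\; -\tfrac{1}{j_{2k-1}-j_{2k}}\, M_0\, r_{odd,\br,k}.$$
For $k\in\{1,\ldots,n\}\setminus T_\br$ I would simply take $A_{2k-1}=0$. In every case I then enforce the defining constraint $A_{2k}=-A_{2k-1}$ of $\mathcal{C}$ to determine $A_{2k}$.

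Finally, I observe that as $k$ varies the indices $2k-1$ are pairwise distinct odd integers in $\{1,\ldots,2n\}$, so the column prescriptions above never conflict, and the pairing constraint $A_{2k}=-A_{2k-1}$ involves only the corresponding even column. Thus $A$ is well defined and the associated $M$ lies in $\mathcal{C}_\br$, proving this set is nonempty. There is no real obstacle here: the argument is elementary linear algebra, and the crucial feature that makes everything work is that each $r_{odd,\br,k}$ has $\delta$-support at a single odd index, which aligns perfectly with the $A_{2k}=-A_{2k-1}$ pairing built into the definition of $\mathcal{C}$.
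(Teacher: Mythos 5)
Your proposal is correct and follows essentially the same route as the paper: expand $Mr_{odd,\br,k}=M_0r_{odd,\br,k}+(j_{2k-1}-j_{2k})A_{2k-1}$, solve uniquely for $A_{2k-1}$ using $j_{2k-1}-j_{2k}>0$ for $k\in T_\br$, set $A_{2k}=-A_{2k-1}$, and note the prescriptions for distinct $k$ never conflict. The only (harmless) addition is your explicit choice $A_{2k-1}=0$ for $k\notin T_\br$, which the paper leaves implicit.
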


\begin{proof}
For any $M = M_0 + A' \in \mathcal{C}$ where $A'=[0_{(m+n)\times m}| A]$ for some $A$ with columns satisfying $A_{2k}=-A_{2k-1}$ for all $k$, one has
$$
Mr_{odd,\br,k} = M_0r_{odd,\br,k} + A(j_{2k-1}-j_{2k}) \delta_{2k-1};
$$
thus to satisfy the condition $Mr_{odd,\br,k}=0$, one must set
$$
A_{2k-1} := \frac{-1}{j_{2k-1}-j_{2k}}M_0r_{odd,\br,k}
$$
and $A_{2k}:=-A_{2k-1}.$
These conditions are distinct as $k$ runs over $T_\br$. We conclude that $\mathcal{C}_\br$ is nonempty.
\end{proof}

The set $\mathcal{C}_\br$  contains a unique element if and only if $T_\br = \{1, 2, \cdots, n\}$.

\begin{prop}\label{prop:compatiblegeneral}
Set $X_\br= X_{\br_e}$ and suppose $\lambda\in \mathscr{H}(m|n)$ is $\br$-generic.  Then for all $M_\br\in \mathcal{C}_{\br}$ we have
$$
M_\br \underline{\lambda}_{\br} + X_\br= M_0 \underline{\lambda}_0 + X_0.
$$
\end{prop}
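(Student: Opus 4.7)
The plan is to reduce the identity to a calculation on $\mathfrak{a}^\ast$ by exploiting both the decomposition \eqref{eq:rbrassum} of $r_\br$ and the defining property of $\mathcal{C}_\br$. First I would unpack the left-hand side: since $\lambda$ is $\br$-generic, Proposition~\ref{P:highestweightMN} (applied via $r(\lambda,\br) = r_\br$) gives $\underline{\lambda}_\br = \underline{\lambda}_0 - r_\br$, and by \eqref{eq:rbrassum} we may further write
\[
\underline{\lambda}_\br = \underline{\lambda}_0 - r_{\br_e} - \sum_{k\in T_\br} r_{odd,\br,k}.
\]
Applying $M_\br \in \mathcal{C}_\br$ and invoking Definition~\ref{D:Cbfirst}, each term $M_\br r_{odd,\br,k}$ vanishes, leaving
\[
M_\br \underline{\lambda}_\br + X_\br = M_\br(\underline{\lambda}_0 - r_{\br_e}) + X_{\br_e}.
\]

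The next step is to observe that $\lambda$ is automatically $\br_e$-generic: by Lemma~\ref{def:be} we have $\ell_{i,\br_e} \leq \ell_{i,\br} \leq 2\lambda_i$ for all $i$, so Corollary~\ref{prp:generic-char} applies. In particular $\underline{\lambda}_0 - r_{\br_e} = \underline{\lambda}_{\br_e}$, and since $\br_e$ is very even this vector lies in $\mathfrak{a}^\ast$. Because any two elements of $\mathcal{C}$ differ by an element of $\mathcal{K}$ (the space of linear maps to $\C^{m|n}$ that vanish on $\mathfrak{a}^\ast$, as described in the proof of Lemma~\ref{lem:Mform}), we have $M_\br v = M_0 v$ for every $v \in \mathfrak{a}^\ast$, and hence
\[
M_\br(\underline{\lambda}_0 - r_{\br_e}) = M_0(\underline{\lambda}_0 - r_{\br_e}).
\]

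To finish, I would substitute $X_\br = X_{\br_e}$ and use Lemma~\ref{L:Xbr} applied to the very even Borel $\br_e$, which gives $X_{\br_e} = M_0\rho_{\br_e}$. Since $\rho_{\br_e} = \rho_0 + r_{\br_e}$ by \cite[Prop.\ 1.33]{cheng_wang_2013} (this is the same identity already used in the proof of Lemma~\ref{L:Xbr}), we obtain $X_{\br_e} = X_0 + M_0 r_{\br_e}$. Combining the pieces,
\[
M_\br \underline{\lambda}_\br + X_\br = M_0(\underline{\lambda}_0 - r_{\br_e}) + X_0 + M_0 r_{\br_e} = M_0\underline{\lambda}_0 + X_0,
\]
as required.

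The proof is essentially a bookkeeping argument, so I don't anticipate a serious technical obstacle; the only subtle point is checking that $\br$-generic implies $\br_e$-generic, which is precisely where the monotonicity in Lemma~\ref{def:be} together with Corollary~\ref{prp:generic-char} is needed. Everything else follows from the fact that the conditions defining $\mathcal{C}_\br$ were engineered exactly to kill the odd contributions $r_{odd,\br,k}$ that push $r_\br$ outside $\mathfrak{a}^\ast$.
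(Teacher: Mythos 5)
Your proof is correct, and the core reduction is the same as the paper's: use $\br$-genericity to write $\underline{\lambda}_{\br}=\underline{\lambda}_0-r_{\br}$, split $r_\br$ via \eqref{eq:rbrassum}, and let the defining condition of $\mathcal{C}_\br$ annihilate the $r_{odd,\br,k}$ terms, reducing everything to the very even Borel subalgebra $\br_e$ (for which $\lambda$ is automatically generic). The only difference is the last step: the paper concludes by citing Theorem~\ref{T:veryeven} together with Proposition~\ref{Prop:equalgeneric}, whereas you finish by hand, using that $\underline{\lambda}_{\br_e}\in\mathfrak{a}^*$ forces $M_\br\underline{\lambda}_{\br_e}=M_0\underline{\lambda}_{\br_e}$ and that $X_{\br_e}=M_0\rho_{\br_e}=X_0+M_0r_{\br_e}$; this is essentially an unwinding of what those two citations amount to, and is, if anything, slightly more self-contained since it avoids routing through the monoidal-equivalence machinery behind Theorem~\ref{T:veryeven}.
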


\begin{proof}
Suppose $\lambda$ is $\br$-generic and suppose $M\in \mathcal{C}_\br$.  Then since 
$$
\underline{\lambda}_\br = \underline{\lambda}_0 - r_\br = \underline{\lambda}_{\br_e}+ \sum_{k\in T_\br}r_{odd,\br,k},
$$
we have, by the conditions defining $\mathcal{C}_\br$ and the choice of $X_\br$, that
$$
M\underline{\lambda}_\br+X_\br = M\underline{\lambda}_{\br_e}+X_{\br_e}.
$$
Moreover, if $\lambda$ is $\br$-generic, then it is also $\br_e$-generic since  $\ell_{i,\br_e}\leq \ell_{i,\br}$ for all $1\leq i\leq m$.   Therefore by Theorem~\ref{T:veryeven} and Proposition~\ref{Prop:equalgeneric}, we infer that 
$$
M \underline{\lambda}_{\br_e} + X_{\br_e} = M_0\underline{\lambda}_0 + X_0,
$$
as required.
\end{proof}

\subsection{A solution for the relatively even case}\label{SS:relativelyeven}

Suppose now that $\lambda$ is not $\br$-generic, but that for each $k\in T_\br$, the set $\mathcal{N}_\br$ of roots defining  $r(\lambda,\br)$ include either all, or none, of the roots defining $r_{odd,\br,k}$.  We will call such $\lambda$ \emph{compatible} with $\br$, but to define compatibility precisely, we must first make some observations about $\delta\epsilon$ sequences.

Suppose that in the $\delta\epsilon$ sequence for $\br$, there exists at least one $\epsilon$ between $\delta_{2k}$ and $\delta_{2k-1}$.  
  Since $j_{2k-1}$ is the number of $\epsilon$ to the left of $\delta_{2k-1}$, and these are in decreasing order,  we deduce that these terms are precisely $\epsilon_m, \cdots, \epsilon_{m-j_{2k-1}+1}$.  By the same reasoning, $\epsilon_{m-j_{2k}}$ is the first $\epsilon$ to the right of $\delta_{2k}$.  Therefore, the $\delta\epsilon$  sequence contains
\begin{equation}\label{eq:deltaepsilonexcerpt}
  \cdots \delta_{2k}\epsilon_{m-j_{2k}}\cdots \epsilon_{m-j_{2k-1}+1}\delta_{2k-1}\cdots.
\end{equation}

Now suppose that $\lambda$ is not $\br$-generic.  Since the sequence of $\lambda_i$ is decreasing, we have that
\begin{equation}\label{eq:interval}
\lambda_{m-j_{2k}} \leq \lambda_{m-j_{2k}-1} \leq  \cdots \leq \lambda_{m-j_{2k-1}+1}.
\end{equation}
On the other hand, the value of $\ell_i$, for each $m-j_{2k} \leq i \leq m-j_{2k-1}+1$, is exactly $2k-1$. Therefore, depending on the relation among $2k-1,2\lambda_{m-j_{2k}}$ and $2\lambda_{m-j_{2k-1}+1}$, we should subtract all, none or only some of the roots from the set \[\{\delta_{2k-1}-\epsilon_{m-j_{2k}},\ldots,\delta_{2k-1}-\epsilon_{m-j_{2k-1}+1}\}\]
to obtain $\underline{\lambda}_\br$ from $\underline{\lambda}_0$.

These observations motivate the following definition.

\begin{defn}
We say that $\lambda \in \mathscr{H}(m|n)$ 
 is \emph{compatible} with $\br$ if it is either $\br$-generic or, for each $k\in T_\br$, we have either $2\lambda_{m-j_{2k}} > 2k-1$  or $2\lambda_{m-j_{2k-1}+1} < 2k-1$. 
\end{defn}

\begin{prop}\label{P:compatibleMN}
Let $\br \in \mathcal{B}^d$ and set $X_\br = X_{\br_e}$. Then for any $M\in \mathcal{C}_\br$, the affine function
$$
\tau_\br(x) = M_\br x + X_\br
$$
satisfies $\tau_\br(\underline{\lambda}_\br)\sim \tau_0(\underline{\lambda}_0)$ for every  $\lambda \in \mathscr{H}(m|n)$ that is compatible with $\br$.
\end{prop}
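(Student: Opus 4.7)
The plan is to reduce the compatible case to the very even case already handled by Theorem~\ref{T:veryeven}. The $\br$-generic subcase is immediate from Proposition~\ref{prop:compatiblegeneral}, which yields the stronger conclusion $\tau_\br(\underline{\lambda}_\br)=\tau_0(\underline{\lambda}_0)$. Therefore I may assume throughout that $\lambda$ is compatible with $\br$ but not $\br$-generic, and focus on relating $\underline{\lambda}_\br$ to $\underline{\lambda}_{\br_e}$.

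The key step is to establish the identity
\[
r(\lambda,\br) \;=\; r(\lambda,\br_e) + \sum_{k\in T_\br(\lambda)} r_{odd,\br,k},
\]
where $T_\br(\lambda):=\{k\in T_\br\mid 2\lambda_{m-j_{2k}}>2k-1\}$ collects the indices for which compatibility holds in the first of its two alternatives. To obtain this, I compare the sets $\mathcal N_\br$ and $\mathcal N_{\br_e}$ of Proposition~\ref{P:highestweightMN} row by row, that is, for each $1\leq i\leq m$. When $\ell_{i,\br}$ is even we have $\ell_{i,\br_e}=\ell_{i,\br}$ and hence $\ell(\lambda,i,\br)=\ell(\lambda,i,\br_e)$, producing no contribution. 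Otherwise $\ell_{i,\br}=2k-1$ for some $k\in T_\br$ and $i\in\{m-j_{2k-1}+1,\dots,m-j_{2k}\}$, with $\ell_{i,\br_e}=2k-2$. Here compatibility forces a uniform behaviour across the whole row: either $2\lambda_i\geq 2\lambda_{m-j_{2k}}>2k-1$ for all such $i$, giving $\ell(\lambda,i,\br)-\ell(\lambda,i,\br_e)=1$ and contributing the root $\delta_{2k-1}-\epsilon_i$; or $2\lambda_i\leq 2\lambda_{m-j_{2k-1}+1}<2k-1$ for all such $i$, giving $\ell(\lambda,i,\br)=\ell(\lambda,i,\br_e)=2\lambda_i$ and contributing nothing. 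Summing the contributions for a row of the first type recovers exactly the vector $r_{odd,\br,k}$ of \eqref{eq:rtakingbetob}.

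Once this identity is in hand, the proof concludes with a short calculation. Translating the identity into highest weights via $\underline{\lambda}_\br=\underline{\lambda}_0-r(\lambda,\br)$ and $\underline{\lambda}_{\br_e}=\underline{\lambda}_0-r(\lambda,\br_e)$ gives
\[
\underline{\lambda}_\br \;=\; \underline{\lambda}_{\br_e} - \sum_{k\in T_\br(\lambda)} r_{odd,\br,k}.
\]
Applying $M\in\mathcal C_\br$, which by Definition~\ref{D:Cbfirst} annihilates each $r_{odd,\br,k}$ for $k\in T_\br$, together with the hypothesis $X_\br=X_{\br_e}$, we obtain
\[
\tau_\br(\underline{\lambda}_\br) \;=\; M\underline{\lambda}_{\br_e}+X_{\br_e} \;=\; \tau_{\br_e}(\underline{\lambda}_{\br_e}).
\]
Since $\br_e$ is very even and $X_{\br_e}=M_0\rho_{\br_e}$ by Lemma~\ref{L:Xbr}, Theorem~\ref{T:veryeven} applied to $\br_e$ with the same $M\in\mathcal C\supseteq\mathcal C_\br$ yields $\tau_{\br_e}(\underline{\lambda}_{\br_e})\sim\tau_0(\underline{\lambda}_0)$, completing the proof. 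The main obstacle is the row-by-row analysis in the second paragraph: it is precisely the dichotomy in the definition of compatibility that makes all indices $i$ in a given row indexed by $k\in T_\br$ behave uniformly, so that the extra roots contributed when passing from $\br_e$ to $\br$ assemble into entire copies of $r_{odd,\br,k}$ which lie in the kernel of every $M\in\mathcal C_\br$.
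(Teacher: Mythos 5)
Your proposal is correct and follows essentially the same route as the paper: dispatch the generic case via Proposition~\ref{prop:compatiblegeneral}, use compatibility to show $r(\lambda,\br)=r(\lambda,\br_e)+\sum_{k\in T_{\br,\lambda}}r_{odd,\br,k}$, then apply $M\in\mathcal C_\br$ (which kills each $r_{odd,\br,k}$) together with $X_\br=X_{\br_e}$ and Theorem~\ref{T:veryeven}. The only cosmetic difference is that you derive the key identity by comparing $\ell(\lambda,i,\br)$ with $\ell(\lambda,i,\br_e)$ row by row, while the paper argues via the complement set \eqref{E:doublestar} and the index $I_{\lambda,\br}$; these are the same combinatorial fact.
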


For ease of reference in the proof, for each $\lambda$ that is compatible with $\br$, define
\begin{equation}\label{eq:Tbl}
T_{\br,\lambda} \coloneqq\{k\in T_\br \mid 2\lambda_{m-j_{2k,\br}} > 2k-1 \}.
\end{equation}

\begin{proof}[Proof of  Proposition~\ref{P:compatibleMN}]
Suppose $\lambda$ is compatible with $\br$. If it is in fact $\br$-generic then Proposition~\ref{prop:compatiblegeneral} implies that $\tau_\br(\underline{\lambda}_\br)=\tau_0(\underline{\lambda}_0)$, so we are done. 

Otherwise, recall that the set of roots $\mathcal{G}_\br\setminus \mathcal{N}_\br$ determining $r_{\br}-r(\lambda,\br)$ is given by \eqref{E:doublestar}, 
where $I=I_{\lambda,\br}$ denotes the least index for which $\ell_{I,\br}>2\lambda_I$.
Recall also that for each $k\in T_\br$, $r_{odd,\br,k} = (j_{2k-1}-j_{2k}) \delta_{2k-1} - \sum_{i= m-j_{2k-1}+1}^{m-j_{2k}}\epsilon_i$, and that for each index $i$ in this sum, $\ell_i = 2k-1$.
The condition of compatibility implies that either
$$
\ell_i = 2k-1 < 2\lambda_{m-j_{2k}} \leq 2\lambda_{m-j_{2k}-1} \leq  \cdots \leq 2\lambda_{m-j_{2k-1}+1}
$$
in which case, $I > i$ and none of these roots appear in the expression \eqref{E:doublestar}, or
$$
2\lambda_{m-j_{2k}} \leq 2\lambda_{m-j_{2k}-1} \leq  \cdots \leq 2\lambda_{m-j_{2k-1}+1} < 2k-1 = \ell_i
$$
in which case $I\leq i$ and every root vector that occurs in  $r_{odd,\br,k}$ appears in \eqref{E:doublestar}.

Therefore, the set $T_{\br,\lambda}$ is exactly the set of $k\in T_\br$ such that none of the roots determining $r_{odd,\br,k}$ occur among the roots in \eqref{E:doublestar}; this set may be empty. 

Therefore by construction we have
$$
r(\lambda,\br) = r(\lambda,\br_e) + \sum_{k\in T_{b,\lambda}}r_{odd,\br,k},
$$
and so we deduce
\begin{equation}\label{eq:relationbrbre}
\underline{\lambda}_{\br} = \underline{\lambda}_0 - r(\lambda,\br) = \underline{\lambda}_0 - r(\lambda,\br_e)-\sum_{k\in T_{\br,\lambda}}r_{odd,\br,k} 
= \underline{\lambda}_{\br_e} -\sum_{k\in T_{\br,\lambda}}r_{odd,\br,k}.
\end{equation}
Let $M_\br\in\mathcal{C}_{\br}$; since $T_{\br,\lambda}\subset T_{\br}$, this implies
$$
M_\br\underline{\lambda}_{\br}  = M_\br(\underline{\lambda}_{\br_e}) - \sum_{k\in T_{b,\lambda}}M_\br r_{odd,\br,k} = M_\br(\underline{\lambda}_{\br_e}).
$$
Thus since $X_\br = X_{\br_e}$, we have that $\tau_\br(\underline{\lambda}_\br) = \tau_{\br_e}(\underline{\lambda}_{\br_e})$.  Finally, as $\br_e$ is very even and $M_\br\in \mathcal{C}$, we can  apply Theorem~\ref{T:veryeven} to deduce that
$$
M_\br\underline{\lambda}_{\br_e} + X_{\br_e} \sim M_0\underline{\lambda}_0+X_0,
$$
and the proposition follows.
\end{proof}

We conclude this section by defining the class of Borel subalgebras $\br$ for which all $\lambda\in \mathscr{H}(m|n)$ are compatible with $\br$.

\begin{defn} \label{D:relativelyeven}
Let $\br \in \mathcal{B}^d$.  Then $\br$, or its corresponding $\delta\epsilon$ sequence, is called \emph{relatively even} if for every $1\leq k\leq n$ we have $j_{2k-1}-j_{2k}\leq 1$, that is, 
 if there is at most one $\epsilon$ between 
 $\delta_{{2k}}$ and  $\delta_{{2k-1}}$.
 \end{defn} 

\begin{cor} \label{prop:reiscomp}
Suppose $\br\in \mathcal{B}^d$ is relatively even.  Then for any $M_\br\in \mathcal{C}_\br$, the affine function $\tau_\br(x) = M_\br x + X_{\br_e}$
satisfies 
\[P_{\mu,\frac{1}{2}}\circ\tau_\br\pr{\underline{\lambda}_{\br}}=P_{\mu,\frac{1}{2}}\circ\tau_{0}\pr{\underline{\lambda}_{0}}\]
for every $\lambda \in \mathscr{H}(m|n)$.
Consequently, the eigenvalue of the Capelli operator  $D^{\mu}$ on the irreducible component $W_\lambda$ is given by the evaluation of a polynomial in the highest weight $\underline{\lambda}_\br$ with respect to $\br$.
\end{cor}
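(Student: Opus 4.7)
The plan is to reduce the corollary to Proposition~\ref{P:compatibleMN} by showing that relative evenness of $\br$ forces every $\lambda\in\mathscr{H}(m|n)$ to be compatible with $\br$. Once this is established, the conclusion is immediate from that proposition, since the existence of such a $\tau_{\br}$ implies that $P_{\mu,1/2}\circ\tau_{\br}$ is a polynomial in the $\br$-highest weight whose evaluation at $\underline{\lambda}_{\br}$ yields the eigenvalue $c_\mu(\lambda)$ via Theorem~\ref{hadi_theo_2}.

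To prove the compatibility claim, suppose $\br$ is relatively even and fix $\lambda\in\mathscr{H}(m|n)$ which is not $\br$-generic. I need to verify, for each $k\in T_\br$, the disjunction
\[
2\lambda_{m-j_{2k,\br}} > 2k-1 \quad \text{or} \quad 2\lambda_{m-j_{2k-1,\br}+1} < 2k-1.
\]
The key observation is that for $k\in T_\br$ one has $j_{2k-1,\br}-j_{2k,\br}\geq 1$ by definition of $T_\br$, while relative evenness forces $j_{2k-1,\br}-j_{2k,\br}\leq 1$. Hence $j_{2k-1,\br}-j_{2k,\br}=1$ exactly, so that
\[
m-j_{2k-1,\br}+1 = m-j_{2k,\br},
\]
and the two conditions in the disjunction become a single condition at a single index, namely $2\lambda_{m-j_{2k,\br}}\neq 2k-1$. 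Since $2\lambda_{m-j_{2k,\br}}$ is an even integer and $2k-1$ is odd, this holds trivially.

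Therefore every $\lambda\in\mathscr{H}(m|n)$ is compatible with $\br$. Proposition~\ref{P:compatibleMN} then yields $\tau_{\br}(\underline{\lambda}_{\br})\sim \tau_0(\underline{\lambda}_0)$ for every such $\lambda$, and since $P_{\mu,1/2}\in\Lambda_{m,n,1/2}$ is invariant under monoidal equivalence, we get $P_{\mu,1/2}\circ\tau_{\br}(\underline{\lambda}_{\br})=P_{\mu,1/2}\circ\tau_0(\underline{\lambda}_0)$. Combined with Theorem~\ref{hadi_theo_2}, this produces the desired polynomial formula for the eigenvalue of $D^\mu$ on $W_\lambda$ in terms of $\underline{\lambda}_{\br}$. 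No step here is difficult; the only substantive point is the parity argument that eliminates the nongeneric obstruction, which is precisely what relative evenness was designed to do.
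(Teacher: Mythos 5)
Your proposal is correct and follows essentially the same route as the paper: both reduce the corollary to Proposition~\ref{P:compatibleMN} by showing that relative evenness forces $j_{2k-1,\br}-j_{2k,\br}=1$ for every $k\in T_\br$, so the two indices in the compatibility condition coincide and the disjunction can only fail at the (parity-forbidden) equality $2\lambda_{m-j_{2k,\br}}=2k-1$. The paper phrases this as a proof by contradiction from the double strict inequality, but the content is identical.
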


\begin{proof}
Let $\lambda \in \mathscr{H}(m|n)$.  If $\lambda$ is not compatible with $\br$, then
there exists an index $k\in T_\br$ for which 
\[
2\lambda_{m-{j_{2k}}} < 2k-1 < 2\lambda_{m-j_{2k-1}+1} .
\]
This is impossible if $j_{2k}=j_{2k-1}$ (as the $\lambda_i$ are decreasing) and if $j_{2k-1}-j_{2k}=1$ (as then $m-j_{2k}=m-j_{2k-1}+1$).  Therefore, if $\br$ is relatively even it is compatible with every $\lambda \in \mathscr{H}(m|n)$. 
That the affine function $\tau_\br$ gives a solution to the refined CEP thus follows from  Proposition~\ref{P:compatibleMN}.
\end{proof}

\subsection{A different approach for $\br$ that are not relatively even}
\label{ss:full}
Unfortunately, following the approach of the preceding section to establish a formula for $\tau_\br$ when $\br$ is not relatively even leads to a piecewise affine function, as in \cite{MengyuanPhD}.  In this section, we follow a different approach.

Recall that the functions in $\Lambda_{m,n,\theta}$ are constant on the orbits $\mathcal{O}$ of elements $\C^{m+n}$ under monoidal symmetry.  Since they are polynomial functions, they are constant on the Zariski closure of $\mathcal{O}$.  As $\mathcal{O}$ is a discrete set, if it is infinite, then its Zariski closure must contain (infinitely) many orbits, and in particular,  
the polynomials in $\Lambda_{m,n,\theta}$ will not separate orbits.  That is, we may have $P(x)=P(y)$ for $x,y$ in distinct orbits, for all $P\in \Lambda_{m,n,\theta}$.

The question of when infinite orbits exist was answered in  \cite{SVorbits}, and in particular this occurs for  $\theta=1/2$.  The following is a consequence of their results.

\begin{lem}\label{lem:orbits}
Suppose $x\in \C^{m|n}$ and that $1\leq i < i_0\leq m$, $1\leq j\leq n$.  Let $\mathrm{orb}(x)$ denote the orbit of $x$ under monoidal symmetry. If $x_i-x_{i_0} =  2x_i + x_{m+j} = -\frac12$ then 
\begin{equation}\label{eq:orbitclosure}
\overline{\mathrm{orb}(x)} \supset \{ u\in \C^{m|n} \mid u_i-u_{i_0} = 2u_i+u_{m+j} = \pm \frac12, 
u_s=x_s \forall s\notin\{i,i_0,m+j\}\},
\end{equation}
where $\overline{\mathrm{orb}(x)}$ denotes the Zariski closure of the orbit of $x$.
\end{lem}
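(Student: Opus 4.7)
The plan is to exploit the two kinds of symmetry available --- the monoidal symmetry from Lemma~\ref{lem:equforf} and the separate symmetry that permutes the $x$-coordinates (which applies because $i,i_0\leq m$) --- to produce an infinite discrete subset of $\mathrm{orb}(x)$ that is Zariski-dense in the target variety. With $\theta=1/2$, the hypothesis $2x_i+x_{m+j}=-\frac12$ is exactly the condition $x_i+\frac12 x_{m+j}=-\frac14=-\frac12(1-\theta)$, so the second case of Lemma~\ref{lem:equforf} gives $x\sim x+e_i-e_{m+j}$.

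I would then combine this monoidal step with the separate symmetry that swaps the $i$-th and $i_0$-th coordinates. A direct calculation shows that the composition replaces the triple $(x_i,x_{i_0},x_{m+j})$ by $(x_i+\frac12,\,x_{i_0}+\frac12,\,x_{m+j}-1)$, leaves every other coordinate untouched, and preserves both defining equations $u_i-u_{i_0}=-\frac12$ and $2u_i+u_{m+j}=-\frac12$. Running the analogous procedure in reverse (swap first; the configuration then lies on the hyperplane $u_i+\frac12 u_{m+j}=+\frac14$, so the first case of Lemma~\ref{lem:equforf} applies instead) produces, for every $k\in\mathbb{Z}$, a point $x^{(k)}\in\mathrm{orb}(x)$ whose entries in positions $i,i_0,m+j$ are $(x_i+k/2,\,x_{i_0}+k/2,\,x_{m+j}-k)$ and which agrees with $x$ in all other coordinates.

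Let $L_-$ and $L_+$ denote the two affine lines obtained by imposing $u_i-u_{i_0}=2u_i+u_{m+j}=-\frac12$ (respectively $+\frac12$) together with $u_s=x_s$ for $s\notin\{i,i_0,m+j\}$; their union is precisely the right-hand side of \eqref{eq:orbitclosure}. The family $\{x^{(k)}\}_{k\in\mathbb{Z}}$ is an infinite arithmetic progression on $L_-$, hence Zariski-dense in it. The intermediate points obtained after each monoidal step but before the following swap form a parallel Zariski-dense sequence on $L_+$. Passing to Zariski closures therefore yields $\overline{\mathrm{orb}(x)}\supset L_-\cup L_+$, which is the desired inclusion.

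The only genuine obstacle is the bookkeeping: tracking which case of Lemma~\ref{lem:equforf} to invoke at each step (the hyperplane alternates between $\pm\frac14$ under the swap of $x_i$ and $x_{i_0}$) and verifying that the iteration stays on the two lines $L_\pm$ throughout. I would expect no additional input from~\cite{SVorbits} to be needed for this particular set-theoretic inclusion; the statement follows from the elementary orbit calculation just sketched.
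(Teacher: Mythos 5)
Your proposal is correct and is essentially the paper's argument made self-contained: the alternation of the monoidal reflection from Lemma~\ref{lem:equforf} with the transposition of coordinates $i$ and $i_0$ is exactly the mechanism generating the explicit infinite $\C^{2|1}$-orbit $\mathcal{O}(a)$ that the paper imports from \cite{SVorbits} before taking Zariski closures. The only (harmless) difference is that you exhibit infinitely many orbit points on each of the two lines rather than identifying the full orbit, which suffices since only the containment is claimed.
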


\begin{proof}
In \cite[\S5 (22)]{SVorbits}, the authors provide a key example for with $\theta=1/2$.  Namely, for any $a\in \C$, it is straightforward to show that 
\begin{align*}
\mathcal{O}(a) = \{&(\ell+a-\frac12,\ell+a,-2(\ell+a)+\frac12),
 (\ell+a+\frac12, \ell+a, -2(\ell+a)-\frac12), \\
 &(\ell+a,\ell+a-\frac12, -2(\ell+a)+\frac12), (\ell+a,\ell+a+\frac12, -2(\ell+a)-\frac12) \mid \ell \in \Z\}
\end{align*}
is an orbit in $\C^{2|1}$ under monoidal symmetry.     The Zariski closure $\overline{\mathcal{O}(a)}$ is thus equal to  $\cup_{a\in \C} \mathcal{O}(a)$, which is the union of the two affine lines:
$$
u_1-u_2=2u_1+u_3=\pm \frac12, \quad u_2-u_1=2u_2+u_3=\pm\frac12.
$$
Applying this to the triple $(i,i_0|j)$ of indices in place of $(1,2|3)$ 
gives the result.
\end{proof}

We now proceed to the definition of our affine function, which we will denote $\tau_\br^{\mathrm{full}}$ to distinguish it from the function $\tau_\br$ defined on relatively even Borel subalgebras in the previous section.  

\begin{defn}\label{D:Cbfull}
Let $\br \in \mathcal{B}^d$.  Define
$$
\mathcal{C}_\br^{\mathrm{full}} = \{ M \in \mathcal{C} \mid \forall k\in T_\br, M\delta_{2k-1}= \frac12e_{m-j_{2k,\br}}-e_{m+k}\}.
$$
Given any $M_\br\in\mathcal C^{\mathrm{full}}_\br$, we set  $X_\br = M_\br r_\br + X_0$ and 
$\tau_\br^{\mathrm{full}}(x) = M_\br x+X_\br$ for  $x\in \mathfrak{h}^*$.
\end{defn}

Note that the defining condition of $\mathcal{C}_\br^{\mathrm{full}}$
basically describes the columns $A_{2k-1}$ of $M_\br$ for $k\in T_\br$, hence $\mathcal{C}_\br^{\mathrm{full}}$ is nonempty.

\begin{remark}
Indeed the value of $M_\br r_\br$ is independent of the choice of $M_\br\in\mathcal C^{\mathrm{full}}_\br$. This can be seen as follows.
By \eqref{eq:rbrassum} we have $r_\br= r_{\br_e}+\sum_{k\in T_\br}r_{odd,\br,k}$.  Since $M_\br \in \mathcal C$, the value of $M_\br r_{\br_e}$ is independent of the choice of $M_\br$. Also, by the definition of $M_\br$ the value of $M_\br r_{odd,\br, k}$ is independent of the choice of $M_\br\in\mathcal C^{\mathrm{full}}_\br$. 
\end{remark}

\begin{theo}\label{T:fullcase}
For every $\lambda\in \mathscr{H}(m|n)$ we have  
\[
P_{\mu,\frac{1}{2}}\circ\tau_\br^{\mathrm{full}}\pr{\underline{\lambda}_{\br}}=P_{\mu,\frac{1}{2}}\circ\tau_{0}\pr{\underline{\lambda}_{0}}.
\]
Consequently, the eigenvalue of the Capelli operator  $D^{\mu}$ on the irreducible component $W_\lambda$ is given by the evaluation of a polynomial in the highest weight $\underline{\lambda}_\br$ with respect to $\br$. 
\end{theo}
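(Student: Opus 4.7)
The plan is to show $\tau_\br^{\mathrm{full}}(\underline{\lambda}_\br)\sim_E\tau_0(\underline{\lambda}_0)$ for every $\lambda\in\mathscr{H}(m|n)$, since $P_{\mu,\frac12}\in\Lambda_{m,n,\frac12}$ and such polynomials do not distinguish $\sim_E$-equivalent points. First I would combine the definition $X_\br=M_\br r_\br+X_0$ with the identity $\underline{\lambda}_\br=\underline{\lambda}_0-r(\lambda,\br)$ from Proposition~\ref{P:highestweightMN}, and use that $M_\br-M_0$ vanishes on $\mathfrak{a}^*$, which contains $\underline{\lambda}_0$, to derive the key identity
\[
\tau_\br^{\mathrm{full}}(\underline{\lambda}_\br)=\tau_0(\underline{\lambda}_0)+M_\br\pr{r_\br-r(\lambda,\br)}.
\]
The entire problem thus reduces to showing that the displacement $M_\br(r_\br-r(\lambda,\br))$ preserves the $\sim_E$-class of $\tau_0(\underline{\lambda}_0)$.

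I would then proceed by cases according to how $\lambda$ interacts with the non-adjacent $\delta$-pairs of $\br$. When $\lambda$ is $\br$-generic the displacement vanishes identically. When $\lambda$ is compatible with $\br$ but not generic, the decomposition $r_\br-r(\lambda,\br)=(r_{\br_e}-r(\lambda,\br_e))+\sum_{k\in T_\br\setminus T_{\br,\lambda}}r_{odd,\br,k}$ holds, and the defining condition $M_\br\delta_{2k-1}=\frac12e_{m-j_{2k}}-e_{m+k}$ of $\mathcal{C}_\br^{\mathrm{full}}$ is engineered so that each $M_\br r_{odd,\br,k}$ can be absorbed into the very-even contribution via a finite chain of monoidal symmetries from Lemma~\ref{lem:equforf}. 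Concretely, this calls for a telescoping refinement of the inductive argument of Proposition~\ref{P:equiveven}, in which the extra coefficient of $e_{m-j_{2k}}$ appearing at each $k\in T_\br\setminus T_{\br,\lambda}$ is tracked and discharged row by row along the list \eqref{E:doublestar}.

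The main obstacle, and the genuinely new content beyond Theorem~\ref{T:veryeven} and Corollary~\ref{prop:reiscomp}, is the incompatible case: for some $k\in T_\br$ only a proper nonempty subset of the roots $\{\delta_{2k-1}-\epsilon_i\mid \ell_{i,\br}=2k-1\}$ contributes to $\mathcal{G}_\br\setminus\mathcal{N}_\br$, and no finite sequence of monoidal symmetries in the sense of Lemma~\ref{lem:equforf} can process such a partial block. Here the plan is to invoke Lemma~\ref{lem:orbits}: the prescription $M_\br\delta_{2k-1}=\frac12e_{m-j_{2k}}-e_{m+k}$ is calibrated precisely so that, at the triple of coordinates associated to each partially-included index $i$ in the block (the components indexed by $i$, $m-j_{2k}$, and $m+k$), the defining equations $x_i-x_{i_0}=2x_i+x_{m+j}=\pm\frac12$ of that lemma are satisfied, placing $\tau_\br^{\mathrm{full}}(\underline{\lambda}_\br)$ in the Zariski closure of a monoidal orbit. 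Since elements of $\Lambda_{m,n,\frac12}$ are constant on such closures, one can then slide along the resulting affine lines to convert the block to either a fully included or fully excluded configuration, reducing to the compatible case already handled. The delicate technical point I expect to be hardest is verifying that these defining equations hold simultaneously at all relevant triples and that the slides for distinct values of $k\in T_\br$ do not interfere; this should require a careful bookkeeping argument, most likely by induction on the number of incompatible blocks, exploiting the fact that the contributions of the different $r_{odd,\br,k}$ live in disjoint coordinate blocks.
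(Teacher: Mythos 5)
Your overall route is the same as the paper's: reduce to the displacement $M_\br\pr{r_\br-r(\lambda,\br)}$ (the paper phrases this through the intermediate very even Borel $\br_e$ and Theorem~\ref{T:veryeven}, but the content is identical), absorb the very even part by the inductive argument of Proposition~\ref{P:equiveven}, and process the remaining roots $\delta_{2k-1}-\epsilon_i$ one block at a time, using Lemma~\ref{lem:equforf} where possible and Lemma~\ref{lem:orbits} where monoidal symmetry fails. Your observation that the relevant triples are $(i,\,m-j_{2k},\,m+k)$ and that distinct blocks have disjoint coordinate support is exactly what the paper's Lemmas~\ref{lem:criticalcase} and~\ref{lem:othercases} exploit.

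There is, however, one step in your plan that would fail as stated: the claim that for compatible non-generic $\lambda$ each term $M_\br r_{odd,\br,k}$ with $k\in T_\br\setminus T_{\br,\lambda}$ can be absorbed by a finite chain of monoidal symmetries. Writing $s=j_{2k-1}-j_{2k}$, the definition of $\mathcal{C}_\br^{\mathrm{full}}$ gives
\[
M_\br r_{odd,\br,k}=\frac{s+1}{2}\,e_{m-j_{2k}}+\frac12\sum_{i=m-j_{2k-1}+1}^{m-j_{2k}-1}e_i-s\,e_{m+k},
\]
which for $s\geq 2$ has half-integer coefficients on at least two distinct $e_i$; since each monoidal symmetry step changes the vector by $\pm(e_i-e_{m+j})$, no finite chain of such moves can produce this displacement. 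Thus the weaker relation $\sim_E$ via Lemma~\ref{lem:orbits} is needed not only for partially excluded blocks but for \emph{every} excluded block of size at least two. This is precisely how the paper proceeds: Lemma~\ref{lem:criticalcase} (genuine monoidal symmetry) handles only the first root $\delta_{2k-1}-\epsilon_{m-j_{2k}}$ of each block, and Lemma~\ref{lem:othercases} (the Zariski-closure equivalence) handles every subsequent root, whether the block is fully or partially excluded. Since you already have the right mechanism in hand and apply it at the correct triples, this is a repairable misclassification rather than a missing idea, but the compatible case of your plan does not go through by monoidal symmetry alone.
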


To prove this, we require several lemmas.
As in Lemma~\ref{def:be}, let $\br_e$ be the closest very even Borel subalgebra to $\br$; we will relate $\tau_\br^{\mathrm{full}}$ to the function $\tau_{\br_e}$ that was defined in Theorem~\ref{T:veryeven}.  For ease of notation, set $\ell_i = \ell_{i,\br}$ and $j_{t}=j_{t,\br}$ in what follows.   

For each $k\in T_\br$, the $\delta\epsilon$ sequence for $\br$ contains
$$
  \cdots \delta_{2k}\epsilon_{m-j_{2k}}\cdots \epsilon_{m-j_{2k-1}+1}\delta_{2k-1}\cdots
$$ 
whereas that for $\br_e$ contains
\begin{equation}\label{eq:deltaepeven}
\cdots \delta_{2k}\delta_{2k-1}\epsilon_{m-j_{2k}}\cdots \epsilon_{m-j_{2k-1}+1}\cdots.
\end{equation}
Thus $\ell_{i} = 2k-1$ and $\ell_{i,\br_e} = 2k-2$ for $m-j_{2k-1}+1\leq i \leq m-j_{2k}$; similarly, $j_{2k-1,\br_e}=j_{2k,\br_e}$ whereas by hypothesis $j_{2k-1}-j_{2k}\geq 1$. 

We have 
$$
r_\br = r_{\br_e} + \sum_{k\in T_\br} r_{odd,\br,k}
$$
as in \eqref{eq:rbrassum}, where 
$$
r_{odd,\br,k} = (\delta_{2k-1} - \epsilon_{m-j_{2k}}) + (\delta_{2k-1} - \epsilon_{m-j_{2k}-1}) + \cdots + (\delta_{2k-1} - \epsilon_{m-j_{2k-1}+1}).
$$
To obtain $\br$ from $\br_e$, we must apply the reflections in the roots $\delta_{2k-1}-\epsilon_i$ in decreasing lexicographic order, that is, for $k$ decreasing in $T_\br$ and then $i$ decreasing from $m-j_{2k}$ until $m-j_{2k-1}+1$.  

\begin{defn}
Write $(k,i)\succeq (k',i')$ if we have $k,k'\in T_\br$, $m-j_{2k-1}+1\leq i \leq m-j_{2k}$, $m-j_{2k'-1}+1\leq i' \leq m-j_{2k'}$ and $k\geq k'$, $i\geq i'$.  Write $(k,i)\succ (k',i')$ if $(k,i)\succeq (k',i')$ and $(k,i)\neq (k',i')$.
\end{defn}

Note that we retain the index $k$ in the notation only for convenience. For two such pairs, the statement $(k,i)\succ(k',i')$ is equivalent to stating $i>i'$, as in each pair the first term is fully determined by the second.  
In particular, we may write $(k,i)\succ(k',i')$  for any $1\leq i'\leq m$, not only those corresponding to $k'\in T_\br$ and $m-j_{2k'+1}\leq i' \leq m-j_{2k'}$, without ambiguity.  
We require two technical lemmas.

\begin{lem} \label{lem:criticalcase}
Fix $M_\br\in\mathcal C_\br^{\mathrm{full}}$ and let $k\in T_\br$.  Suppose the coefficient of $\epsilon_{m-j_{2k}}$ in $\underline{\lambda}_{\br_e}$ is zero.  Then we have
\begin{equation}\label{eq:lem1}
\tau_{\br_e}(\underline{\lambda}_{\br_e}) + M_\br \sum_{k'>k} r_{odd,\br,k'}  \sim 
\tau_{\br_e}(\underline{\lambda}_{\br_e}) + M_\br \sum_{k'>k} r_{odd,\br,k'} + (e_{m-j_{2k}}-e_{m+k})
\end{equation}
where $\sim$ denotes monoidal symmetry.
\end{lem}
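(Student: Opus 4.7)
The plan is to apply Lemma~\ref{lem:equforf}: for $\theta=1/2$, the step $v\sim v+e_i-e_{m+j}$ holds precisely when $v_i+\tfrac12 v_{m+j}=-\tfrac14$. Taking $i=m-j_{2k,\br}$ and $j=k$, the claim reduces to verifying this identity for the corresponding two coordinates of
\[
v := \tau_{\br_e}(\underline{\lambda}_{\br_e}) + M_\br\sum_{k'>k}r_{odd,\br,k'}.
\]

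My first step is to show that $M_\br\sum_{k'>k}r_{odd,\br,k'}$ contributes nothing in these two coordinates. Using $M_\br\epsilon_i=-\tfrac12 e_i$ together with the defining relation $M_\br\delta_{2k'-1}=\tfrac12 e_{m-j_{2k',\br}}-e_{m+k'}$ of $\mathcal C^{\mathrm{full}}_\br$, the support of each $M_\br r_{odd,\br,k'}$ is contained in $\{e_{m+k'}\}\cup\{e_s:m-j_{2k'-1,\br}+1\le s\le m-j_{2k',\br}\}$. For $k'>k$, the nonincreasing monotonicity $j_{2k'-1,\br}\le j_{2k,\br}$ forces every such $s$ to satisfy $s>m-j_{2k,\br}$, while trivially $m+k'\ne m+k$.

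Next, the very-evenness of $\br_e$ gives $\underline{\lambda}_{\br_e}\in\mathfrak a^*$ and hence $M_\br\underline{\lambda}_{\br_e}=M_0\underline{\lambda}_{\br_e}$. I then compute the two relevant coordinates of $\tau_{\br_e}(\underline{\lambda}_{\br_e})=M_0\underline{\lambda}_{\br_e}+X_{\br_e}$ directly, using $X_{\br_e}=M_0\rho_{\br_e}$, $\rho_{\br_e}=\rho_0+r_{\br_e}$, and the identifications $\ell_{m-j_{2k,\br},\br_e}=2k-2$ and $j_{2k-1,\br_e}=j_{2k,\br_e}=j_{2k,\br}$ (because $\delta_{2k}$ occupies the same position in the $\delta\epsilon$-sequences of $\br$ and $\br_e$). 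Applying Proposition~\ref{P:highestweightMN} to read off the $\delta_{2k-1}$- and $\delta_{2k}$-coefficients of $\underline{\lambda}_{\br_e}$, and using the hypothesis with Remark~\ref{Rem:coeffszero} to make $(M_0\underline{\lambda}_{\br_e})_{m-j_{2k,\br}}$ vanish, I arrive at
\[
v_{m-j_{2k,\br}} = E_{m-j_{2k,\br}}+(k-1), \qquad v_{m+k} = \mu_k + N + F_k - j_{2k,\br},
\]
where $N:=\#\{1\le i\le m:\ell(\lambda,i,\br_e)\ge 2k\}$.

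The crux, and the main obstacle, is to exploit the hypothesis to kill the residual terms $\mu_k$ and $N$. Via Remark~\ref{Rem:coeffszero} and $\ell_{m-j_{2k,\br},\br_e}=2k-2$, the hypothesis amounts to $\lambda_{m-j_{2k,\br}}\le k-1$. Since $(\lambda_i)$ is decreasing, this yields $\lambda'_k\le m-j_{2k,\br}-1<m$, so $\mu_k=0$; the same inequality forces the two conditions defining $N$ (namely $\ell_{i,\br_e}\ge 2k$, requiring $i\ge m-j_{2k,\br}+1$, and $\lambda_i\ge k$, requiring $i\le\lambda'_k\le m-j_{2k,\br}-1$) to be simultaneously unsatisfiable, so $N=0$. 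Substituting the explicit values of $E_{m-j_{2k,\br}}$ and $F_k$ and simplifying then yields $v_{m-j_{2k,\br}}+\tfrac12 v_{m+k}=-\tfrac14$, and Lemma~\ref{lem:equforf} completes the proof.
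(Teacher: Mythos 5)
Your proof is correct and follows essentially the same strategy as the paper's: reduce to the hyperplane condition of Lemma~\ref{lem:equforf}, check that the correction term $M_\br\sum_{k'>k}r_{odd,\br,k'}$ has no support in coordinates $m-j_{2k}$ and $m+k$, and verify $v_{m-j_{2k}}+\tfrac12 v_{m+k}=-\tfrac14$. The only (cosmetic) difference is that you compute the two coordinates of $\tau_{\br_e}(\underline{\lambda}_{\br_e})$ directly from Proposition~\ref{P:highestweightMN} together with $X_{\br_e}=M_0\rho_{\br_e}$ and then kill the residual terms $\mu_k$ and $N$, whereas the paper routes the same computation through the telescoped identity $\tau_{\br_e}(\underline{\lambda}_{\br_e})=M_0\underline{\lambda}_0+X_0+\sum_{i}\sum_{j}(e_i-e_{m+j})$ from the proof of Proposition~\ref{P:equiveven}.
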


\begin{proof}
Recall that  $\tau_{\br_e}(x)=
M_{\br_e}x +X_{\br_e}=
M_0x+M_0r_{\br_e}+X_0$.
As in the proof of Proposition~\ref{P:equiveven}, by a direct calculation using~\eqref{E:doublestar}  
we have \[
M_0(r_{\br_e}-r(\lambda,\br_e))=
\sum_{i=I_{\lambda,\br_e}}^{m}\sum_{j=\lambda_i+1}^{\frac12\ell_{i,\br_e}}(e_i-e_{m+j}).
\]
Hence
\begin{equation}\label{eq:summingtombe}
\tau_{\br_e}(\underline{\lambda}_{\br_e})=M_{\br_e}\underline{\lambda}_{\br_e} +X_{\br_e} =
M_0\underline{\lambda}_{0}+X_0 + \sum_{i=I_{\lambda,\br_e}}^{m}\sum_{j=\lambda_i+1}^{\frac12\ell_{i,\br_e}}(e_i-e_{m+j}).
\end{equation}
The hypothesis implies $2\lambda_{m-j_{2k}}\leq \ell_{m-j_{2k},\br_e}=2k-2$. 
Thus
for $i>m-j_{2k}$ we have
\[
2\lambda_i\leq 2\lambda_{m-j_{2k}}
\leq \ell_{m-j_{2k},\br_e}\leq \ell_{i,\br_e}-2.
\]
This implies that $I_{\lambda,\br_e}\leq m-j_{2k}+1$.  

Recall the formula for the sums $r_{odd,\br,k}$ defined in~\eqref{eq:rtakingbetob}.
First note that the coefficients of $e_{m-j_{2k}}$ and $e_{m+k}$ in 
$M_\br \sum_{k'>k} r_{odd,\br,k'}$ are zero.  Namely, by the definition of $M_\br$ we have that $M_\br(\epsilon_i) = -\frac12e_i$ and $M_\br(\delta_{2k'-1}) = \frac12 e_{m-j_{2k'}}-e_{m+k'}$.  The only pairs $(k',i)$ occuring in this sum have $k'>k$ and $i>m-j_{2k}$.

Next we  compute the coefficients of 
$e_{m-j_{2k}}$ and $e_{m+k}$ in
$\tau_{\br_e}(\underline{\lambda}_{\br_e}) = \sum_i x_ie_i$.

Let us begin with  $x_{m-j_{2k}}$.   The coefficient of $e_{m-j_{2k}}$ in $M_0\underline{\lambda}_{0}$ is $\lambda_{m-j_{2k}}$ and in $X_0$ is $E_{m-j_{2k}}$.  
 If it happens that $I_{\lambda,\br_e}\leq m-j_{2k}$, then the index $i=m-j_{2k}$ occurs in the sum \eqref{eq:summingtombe} whence the coefficient of $e_{m-j_{2k}}$ in this sum is $\frac12\ell_{m-j_{2k},\br_e}-\lambda_{m-j_{2k}}$.  Since
$\frac12\ell_{m-j_{2k},\br_e}=k-1$, we deduce that $x_{m-j_{2k}}=E_{m-j_{2k}}+k-1$. If indeed $I_{\lambda,\br_e}=m-j_{2k}+1$, then we must have 
$2\lambda_{m-j_{2k}}=2k-2$ and the index $i=m-j_{2k}$ does not occur in~\eqref{eq:summingtombe}, hence again we have \[
x_{m-j_{2k}}=\lambda_{m-j_{2k}}+E_{m-j_{2k}}=E_{m-j_{2k}}+k-1.
\]
Next we compute $x_{m+k}$.  Since $2\lambda_{m} \leq 2\lambda_{m-j_{2k}} \leq \ell_{m-j_{2k},\br_e}=2k-2$, we deduce from the structure of the hook partition that $\mu_{2k-1}=\mu_{2k}=0$, so the coefficient of $e_{m+k}$ in $M_0\underline{\lambda}_0$ is zero.   The coefficient of $e_{m+k}$ in $X_0$ is $F_k$.  The coefficient of $e_{m+k}$ in the sum \eqref{eq:summingtombe} is the number of indices $i$  such that 
$\lambda_i<k\leq \ell_{i,\br_e}/2$.  The inequality 
$k\leq \ell_{i,\br_e}$ implies $\epsilon_i$ is to the left of $\delta_{2k}$ in $\br_e$ in \eqref{eq:deltaepeven} and in particular $i>m-j_{2k}$; but then $\lambda_i\leq \lambda_{m-j_{2k}}\leq k-1$ holds.  Thus the coefficient is $-(m-(m-j_{2k}))=-j_{2k}$, whence $x_{m+k} = F_k-j_{2k}$.

Now note that $E_{m-i}+\frac12F_j=\frac34+\frac12i-j$, so that
$$
x_{m-j_{2k}}+\frac12 x_{m+k} = \frac34+\frac12j_{2k} - k + k-1 - j_{2k}=-\frac14,
$$
which implies that monoidal symmetry may be applied, yielding the result.
\end{proof}

\begin{lem} \label{lem:othercases}
Fix $M_\br\in\mathcal C_\br^{\mathrm{full}}$.  Let $k\in T_\br$ and suppose $m-j_{2k-1}+1\leq i <  m-j_{2k}$.  Suppose the coefficient of $\epsilon_{i}$ in $\underline{\lambda}_{\br_e}$ is zero.  Then we have
\begin{align}\label{eq:lem2}
\tau_{\br_e}(\underline{\lambda}_{\br_e}) &+ M_\br \sum_{(k',i')\succ(k,i)} (\delta_{2k'-1}-\epsilon_{i'}) \\ 
\notag &  \sim_E \quad
\tau_{\br_e}(\underline{\lambda}_{\br_e}) + M_\br \sum_{(k',i')\succ(k,i)} (\delta_{2k'-1}-\epsilon_{i'})  + \left(\frac12e_{m-j_{2k}}+\frac12e_i-e_{m+k}\right)
\end{align}
although they are not related by monoidal symmetry.
\end{lem}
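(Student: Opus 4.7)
The strategy is to invoke Lemma~\ref{lem:orbits} for the triple of indices $(i, i_0, k)$, where $i_0 := m-j_{2k,\br}$. Let $V := \tau_{\br_e}(\underline{\lambda}_{\br_e}) + M_\br \sum_{(k',i')\succ(k,i)}(\delta_{2k'-1}-\epsilon_{i'})$ be the vector on the left side of \eqref{eq:lem2}. The plan is to show that the $e_i$-, $e_{i_0}$-, and $e_{m+k}$-coordinates $u_i, u_{i_0}, u_{m+k}$ of $V$ simultaneously satisfy $u_i - u_{i_0} = -\frac12$ and $2u_i + u_{m+k} = -\frac12$. Since translation by $\frac12 e_{i_0} + \frac12 e_i - e_{m+k}$ preserves both linear functionals, Lemma~\ref{lem:orbits} places $V$ and its translate in the same irreducible affine line inside the Zariski closure of a single monoidal orbit; hence all elements of $\Lambda_{m,n,\theta}$ take the same value at these two points, yielding the desired $\sim_E$ equivalence.

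First, one computes the three key coordinates of $\tau_{\br_e}(\underline{\lambda}_{\br_e})$. The hypothesis that the coefficient of $\epsilon_i$ in $\underline{\lambda}_{\br_e}$ vanishes, together with Remark~\ref{Rem:coeffszero}, forces $\lambda_i \leq k-1$; since $\lambda$ is decreasing and $i_0 > i$, the analogous vanishing also holds at $\epsilon_{i_0}$, so $\lambda_{i_0}\leq k-1$ and in fact $\lambda_{m+1}\leq \lambda_m\leq k-1$, which forces $\mu_k = 0$. A direct calculation using \eqref{eq:summingtombe} and the definitions of $E_i, F_k$ in \eqref{EandFglm2n}, mirroring the computation in Lemma~\ref{lem:criticalcase}, yields that the $e_i$-, $e_{i_0}$-, and $e_{m+k}$-coordinates of $\tau_{\br_e}(\underline{\lambda}_{\br_e})$ are respectively $E_i + k - 1$, $E_{i_0} + k - 1$, and $F_k - j_{2k}$.

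Next, one evaluates the contribution of the sum $M_\br \sum_{(k',i')\succ(k,i)}(\delta_{2k'-1}-\epsilon_{i'})$ to the same three coordinates, using the defining relations $M_\br \epsilon_{i'} = -\frac12 e_{i'}$ and $M_\br \delta_{2k'-1} = \frac12 e_{m-j_{2k'}} - e_{m+k'}$ of $\mathcal{C}_\br^{\mathrm{full}}$. The monotonicity $j_{2k'} \leq j_{2k}$ for $k' \geq k$ (with strict inequality when $k' > k$) shows that only pairs $(k,i')$ with $i < i' \leq i_0$ inject any contribution into these three coordinates: they contribute $+\frac12(i_0 - i + 1)$ to the $e_{i_0}$-coordinate (with one extra $\frac12$ coming from the single $-\epsilon_{i_0}$ term), $0$ to the $e_i$-coordinate, and $-(i_0-i)$ to the $e_{m+k}$-coordinate. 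Combining with the previous step gives $u_i = E_i + k - 1$, $u_{i_0} = E_{i_0} + k - 1 + \frac12(i_0-i+1)$, and $u_{m+k} = F_k - m + i$. The identity $E_i - E_{i_0} = (i_0-i)/2$ then yields $u_i - u_{i_0} = -\frac12$, and substituting the formulas for $E_i, F_k$ gives $2u_i + u_{m+k} = -\frac12$, so Lemma~\ref{lem:orbits} applies.

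The main obstacle is the combinatorial bookkeeping in the second step: one must correctly identify, among all reflection pairs $(k',i')\succ(k,i)$, exactly which contribute to each of the three distinguished coordinates and with what multiplicity, while propagating the consequences of the vanishing hypothesis at $\epsilon_i$ through the Frobenius-type structure of $\underline{\lambda}_0$. In contrast to Lemma~\ref{lem:criticalcase}, where plain monoidal symmetry sufficed, here the two vectors lie in distinct monoidal orbits, so the appeal to Lemma~\ref{lem:orbits} via orbit-closure is essential; the precise definition of $\mathcal{C}_\br^{\mathrm{full}}$ is what allows both the $u_i - u_{i_0} = -\frac12$ and $2u_i + u_{m+k} = -\frac12$ identities to hold simultaneously.
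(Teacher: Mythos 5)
Your proposal is correct and follows essentially the same route as the paper: compute the $e_i$-, $e_{m-j_{2k}}$- and $e_{m+k}$-coordinates of the left-hand vector (getting $E_i+k-1$, $E_{m-j_{2k}}+k-1+\frac12(m-j_{2k}-i+1)$ and $F_k-m+i$), verify the relations $x_i-x_{m-j_{2k}}=-\frac12$ and $2x_i+x_{m+k}=-\frac12$, and invoke Lemma~\ref{lem:orbits} to pass to the Zariski closure of the (infinite) orbit. The only place you are lighter than the paper is in justifying the coordinates of $\tau_{\br_e}(\underline{\lambda}_{\br_e})$ via \eqref{eq:summingtombe}, where the paper separately treats the cases $s\geq I_{\lambda,\br_e}$ and $s<I_{\lambda,\br_e}$; the values you state are nonetheless correct.
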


\begin{proof}
We set $i_0 = m-j_{2k}$ and show that with $x=\sum_t x_te_t$ equal to the left hand side above, we may apply Lemma~\ref{lem:orbits} to the triple $(i,i_0,k)$.  Since $\frac12e_{m-j_{2k}}+\frac12e_i-e_{m+k}$ has coefficients $(\frac12, \frac12, -1)$, it then follows that the left side of the relation lies in $\overline{\mathrm{orb}(x)}$ as well, as required.

As noted in the previous proof, the coefficients of $\{e_i,e_{i_0}, e_{m+k}\}$ in $x$ coincide with those of 
\begin{equation}\label{eq:intermediate}
\tau_{\br_e}(\underline{\lambda}_{\br_e}) + M_\br \sum_{i<i'\leq m-j_{2k}} (\delta_{2k-1}-\epsilon_{i'}) 
\end{equation}
as the terms with $k'>k$ cannot contribute.  

First we compute the coefficients of 
$\{e_i,e_{i_0},e_{m+k}\}$ in 
the right hand side of~\eqref{eq:summingtombe}.
Since the coefficient of $\epsilon_i$ in $\underline{\lambda}_{\br_e}$ is zero, we have $\lambda_{m-j_{2k}}\leq \lambda_i \leq \ell_{i,\br_e}/2 = k-1$.
It follows that for $i'>m-j_{2k}$ we have \[
\lambda_{i'}\leq \lambda_{m-j_{2k}}\leq k-1<\ell_{i',\br_e}/2
\] and  
in particular, $I_{\lambda,\br_e}\leq  m-j_{2k}+1$.

Now consider any $s\leq m$. The coefficient of $e_{s}$ in $M_0\lambda_0+X_0$ is $\lambda_s+E_s$.   If
$s\geq I_{\lambda,\br_e}$, then the coefficient of $e_s$ in the sum on the right hand side of \eqref{eq:summingtombe} is $\frac12 \ell_{s,\br_e}-\lambda_s = k-1-\lambda_s$. Hence the coefficient of $e-s$ on the right hand side of~\eqref{eq:summingtombe} is 
$\lambda_s+E_s+(k-1-\lambda_s)=E_s+k-1$. 
If $i\leq s<I_{\lambda,\br_e}$, then  $e_s$ does not occur in
the sum on the right hand side of \eqref{eq:summingtombe} but we have 
\[
k-1=\ell_{i,\br_e}\leq 
\ell_{s,\br_e}\leq \lambda_s\leq \lambda_i\leq k-1,
\]
from which it follows that $\lambda_s=k-1$. This means that in this case also the coefficient of $e_s$ on the right hand side of~\eqref{eq:summingtombe}
is equal to $\lambda_s+E_s=E_s+k-1$. As a consequence, we obtain that the coefficient of $e_s$ for $s=i,i_0$
in~\eqref{eq:summingtombe} is $E_s+k-1$.

On the other hand, from $\lambda_m\leq k-1$ it follows that $\mu_{2k-1}=\mu_{2k}=0$ so the coefficient of $e_{m+k}$ in $M_0\lambda_0+X_0$  is $F_k$.  As in the  proof of Lemma~\ref{lem:criticalcase}, the coefficient of $e_{m+k}$ in the sum on the right hand side of \eqref{eq:summingtombe} is $-j_{2k}$.

We now compute the contributions of the second term in \eqref{eq:intermediate}. First note that
$$
 \sum_{i<i'\leq m-j_{2k}} (\delta_{2k-1}-\epsilon_{i'})
= (m-j_{2k}-i)\delta_{2k-1} - \sum_{i'=i+1}^{m-j_{2k}} \epsilon_{i'}.
$$
Therefore from the definition of $M_\br$ we have
\begin{align*}
M_\br \sum_{i<i'\leq m-j_{2k}} (\delta_{2k-1}-\epsilon_{i'}) &=
 (m-j_{2k}-i)M_\br\delta_{2k-1} - \sum_{i'=i+1}^{m-j_{2k}} M_\br\epsilon_{i'}\\
 &= (m-j_{2k}-i)(\frac12e_{m-j_{2k}}-e_{m+k}) - \sum_{i'=i+1}^{m-j_{2k}}(-\frac12 e_{i'}).
 \end{align*}
Therefore, altogether we have
\begin{align*}
x_{i} &= E_i +k-1\\
x_{m-j_{2k}} &= E_{m-j_{2k}} + k-1 + \frac12(m-j_{2k}-i)+
\frac12\\
x_{m+k} &= F_k-j_{2k} -  (m-j_{2k}-i) = F_k-m+i.
\end{align*}
We compute
\begin{align*}
x_i-x_{m-j_{2k}} &= E_i - E_{m-j_{2k}} - \frac12(m-j_{2k}-i+1)\\
&= \frac14(-2i+2(m-j_{2k}))-\frac12(m-j_{2k}-i+1) = -\frac12
\end{align*}
and
\begin{align*}
2x_i+x_{m+k} &= 2(E_i+k-1)+(F_k-m+i)\\
&= \frac12(m-2n+1-2i)+2k-2 + \frac12(m+2n+2-4k) -m+i\\
&= \frac12-2+1 = -\frac12.
\end{align*}
It follows from Lemma~\ref{lem:orbits} that the orbit of $x$ is infinite and that $x+\frac12e_{m-j_{2k}}+\frac12e_i-e_{m+k}$ lies in its Zariski closure.
\end{proof}

\begin{proof}[Proof of Theorem~\ref{T:fullcase}]
We begin by relating $\tau_\br^{\mathrm{full}}$ to $\tau_{\br_e}$.  Let $I=I_{\lambda,\br}$ be the least index $i$ for which $2\lambda_i<\ell_{i,\br}$.  Then $2\lambda_I \leq \ell_{I,\br_e}$ and so by Remark~\ref{Rem:coeffszero}, for every $j\geq I$, the coefficient of $\epsilon_j$ in $\underline{\lambda}_{\br_e}$ is zero.

Set $k_\lambda = \lceil (1+\ell_{I,\br})\rceil/2$;  if $\ell_{I,\br}$ is odd, this is the index such that $m-j_{2k_\lambda}+1\leq I \leq m-j_{2k_\lambda}$, and otherwise it need not be a relevant index.  
Then
$$
r(\lambda,\br)=r(\lambda,\br_e)  + \sum_{(k,i)\prec(k_\lambda,I)} (\delta_{2k-1}-\epsilon_i).
$$
Since $M_\br \in \mathcal{C}$ and $\underline{\lambda}_\br =  \underline{\lambda}_{\br_e} - (r(\lambda,\br)-r(\lambda,\br_e))$, for any  $\lambda$ we have
\begin{align*}
M_\br\underline{\lambda}_\br+X_{\br} &=
M_\br(\underline{\lambda}_{\br_e} - (r(\lambda,\br)-r(\lambda,\br_e))) + (M_\br r_\br +X_0)\\
&= M_\br\underline{\lambda}_{\br_e} + X_{\br_e} + M_\br((r_\br-r(\lambda,\br))-(r_{\br_e}-r(\lambda,\br_e)))\\
&= \tau_{\br_e}(\underline{\lambda}_{\br_e}) + M_\br \sum_{(k,i)\succeq(k_\lambda,I)} (\delta_{2k-1}-\epsilon_i ).
\end{align*}


  We now proceed by induction from $J=m+1$ down to $J=I$, showing that 
(with $K_J=\lceil (1+\ell_{J,\br})\rceil/2$)  if
\begin{equation}\label{eq:inductionhyp}
\tau_{\br_e}(\underline{\lambda}_{\br_e}) + M_\br \sum_{(k,i)\succeq (K_J,J) }(\delta_{2k-1}-\epsilon_i)  \sim_E \tau_{\br_e}(\underline{\lambda}_{\br_e})
\end{equation}
for all $\lambda$ such that the coefficient of $\epsilon_J$ in $\underline{\lambda}_{\br_e}$ is $0$, then this same relation  also holds when we replace $J$ with $J-1$ and apply it to all $\lambda$ such that 
the coefficient of $\epsilon_{J-1}$ in 
$\underline{\lambda}_{\br_e}$ is $0$. Note that if the coefficient of $\epsilon_i$ in $\overline{\lambda}_{\br_e}$ is zero, then so is the coefficient of $\epsilon_j$ for any $j>i$; 
thus the induction hypothesis will apply.

First suppose that $J=m+1$ (which corresponds, for example, to the generic case).
Then the sum over $(k,i)\succeq (K_J,J)$ is empty, so there is nothing to show.  

Next suppose the result for $J$.  Let $\lambda$ be such that the coefficient of $\epsilon_{J-1}$ in $\underline{\lambda}_{\br_e}$ is $0$.
 If $\ell_{J-1,\br}=\ell_{J-1,\br_e}$, then $J-1$ does not occur as the second index of any pair $(k,i)$ in \eqref{eq:inductionhyp}, so we have equality of left hand sides, and are done.
 Otherwise, let $k$ be the index such that $m-j_{2k-1}+1 \leq J-1 \leq m-j_{2k}$.  

If $J-1=m-j_{2k}$ then $k=K_{J-1}<K_J$.  Thus the left hand side of \eqref{eq:inductionhyp} is equal to the left hand side of \eqref{eq:lem1}.  By Lemma~\ref{lem:criticalcase}, with $k=K_{J-1}$, this is monoidally equivalent to
$$
\tau_{\br_e}(\underline{\lambda}_{\br_e}) + M_\br \sum_{k'>k} r_{odd,\br,k'} + (e_{m-j_{2k}}-e_{m+k}).
$$
Since $M_\br(\delta_{2k-1}-\epsilon_{m-j_{2k}}) = e_{m-j_{2k}}-e_{m+k}$, the latter formula is~\eqref{eq:inductionhyp} for $J-1$ and 
we are done by the induction hypothesis. 

Finally, if $m-j_{2k-1}+1 \leq J-1 < m-j_{2k}$ then $K_J=K_{J-1}=k$.  Thus the left hand side of  \eqref{eq:inductionhyp} is equal to the left hand side of \eqref{eq:lem2} with $i=J-1$.  By Lemma~\ref{lem:othercases}, this is equivalent (in the sense of $\sim_E$) to 
$$
\tau_{\br_e}(\underline{\lambda}_{\br_e}) + M_\br \sum_{(k',i')\succ(k,i)} (\delta_{2k'-1}-\epsilon_{i'})  + \left(\frac12e_{m-j_{2k}}+\frac12e_i-e_{m+k}\right).
$$
Since $M_\br(\delta_{2k-1}-\epsilon_{i}) = \frac12e_{m-j_{2k}}+\frac12e_i-e_{m+k}$, the latter formula is also~\eqref{eq:inductionhyp}
for $J-1$, and again
we are done.

Since the relation $\sim_E$ ensures that composing with $P_{\mu,\frac12}$, for any $\mu\in \mathscr{H}(m|n)$, gives the same value, this completes the proof of the theorem.
\end{proof}

\begin{remark}
Note that if $\br$ is very even, then 
$\tau_\br^{\mathrm{full}} = \tau_\br$ and thus Theorem~\ref{T:veryeven} 
is a special case of Theorem~\ref{T:fullcase}.  

However, when $\br$ is relatively even, then for each $k$ such that $j_{2k-1}-j_{2k}=1$, 
Definition~\ref{D:Cbfirst} sets $M(\delta_{2k-1}-\epsilon_{m-j_{2k}})= 0$ for each $M\in \mathcal{C}_\br$, whereas Definition~\ref{D:Cbfull} sets
$M(\delta_{2k-1}-\epsilon_{m-j_{2k}})=e_{m-j_{2k}}-e_{m+k}$  for each $M\in \mathcal{C}_\br^{\mathrm{full}}$.  Thus 
$\tau_\br^{\mathrm{full}}\neq \tau_\br$ but (as can be deduced from the proof of  Lemma~\ref{lem:criticalcase}), they are monoidally equivalent on all highest weights $\underline{\lambda}_{\br}$.  Interestingly, as Example~\ref{eg:surprisingexample-5} below shows, the choice of $M\delta_{2k-1}$ made in Definition~\ref{D:Cbfirst} cannot be extended to define a valid $\tau_\br$ in the non-relatively even case, and so Corollary~\ref{prop:reiscomp} is a separate solution to the refined Capelli eigenvalue problem that is neither a consequence nor a special case of Theorem~\ref{T:fullcase}.

Finally, note that the key step \eqref{eq:lem2} of Lemma~\ref{lem:othercases}, which is invoked precisely for the non-relatively even Borel subalgebras,  was not one of monoidal equivalence or supersymmetry, but rather of the weaker equivalence relation $\sim_E$, thus showing that monoidal equivalence alone does not suffice.  Nevertheless, a key result of \cite{MengyuanPhD} is that for any $\br'\in \mathcal{B}^d$, one can define \emph{piecewise} affine functions $\tau'$ that satisfy $\tau'(\underline{\lambda}_{\br'})\sim\tau_0(\underline{\lambda}_0)$ for all $\lambda\in \Omega_{m|n}$.
\end{remark}

It is natural to ask if the solution in Theorem~\ref{T:fullcase} is unique.   Here we give an example to illustrate this can be true.

\begin{eg}
\label{eg:surprisingexample-5}
Let $m=2$ and $n=1$, so that $\g=\gl(2|2)$.
The Weyl vector corresponding to the opposite standard Borel subalgebra is $\rho_0 = (1/2,3/2,-3/2,-1/2)$ and thus $X_0=M_0\rho_0=(-1/4,-3/4,1)$, where $M_0$ is given in~\eqref{E:M0}. 
Consider the Borel subalgebra $\br$ corresponding to the $\delta\epsilon$ sequence $\delta_2\epsilon_2\epsilon_1\delta_1$.  This is the smallest example with the property that $j_{2k-1,\br}-j_{2k,\br}\geq 2$ for some $1\leq k\leq n$.
Since $\br_0$ corresponds to $\delta_2\delta_1\epsilon_2\epsilon_1$, we know $\mathcal{G}_\br = \{ \delta_1-\epsilon_2, \delta_1-\epsilon_1\}$.  By Proposition~\ref{P:highestweightMN}, we compute the highest weights $\underline{\lambda}_\br$ for $\lambda \in \mathscr{H}(2|1)$ as in Table~\ref{Table:gl22}.
\begin{table}[htb]
\begin{tabular}{|c|c|c|c|}
\hline$\lambda$& $\underline{\lambda}_0$ & $\underline{\lambda}_\br$ & (condition) \\
\hline
$(r,s|t)$ & $-(2r, 2s, t, t)$ & $-(2r-1, 2s-1, t+2, t)$ &  $r\geq s > 0$,  $t\geq 0$, $r,s,t\in \Z$\\
$(r)$ & $-(2r, 0,0,0)$ & $-(2r-1, 0, 1, 0)$ & $r>0$, $r\in \Z$\\
$\emptyset$ & $(0,0,0,0)$&$(0,0,0,0)$&\\
\hline
\end{tabular}
\vspace{2mm}

\caption{The $\br_0$- and $\br$-highest weights of $W_\lambda$ for all  $\lambda\in\mathscr{H}(2|1)$.}
\label{Table:gl22}
\end{table}

Suppose now that  $\tau'(x)=M'x+X'$ is an affine map such that 
\begin{equation}
\label{eq:Pmu=}
P_{\mu,\frac12}(\tau'(\underline\lambda_\br))=
P_{\mu,\frac12}(\tau_0(\underline\lambda_0))
\quad\text{ for all }
\lambda\in \mathscr H(2|1).
\end{equation} 
%

Note that the orbits of $\underline{\lambda}_0$ under monoidal symmetry, for $\lambda$ $\br$-generic, are finite, so \eqref{eq:Pmu=} holds if and only if the elements are equivalent under monoidal symmetry, and as noted in Proposition~\ref{Prop:equalgeneric}, we must have $M' \in \mathcal{C}$, so that it takes the form
\begin{equation}\label{E:Mbgl22}
M' = \mat{-1/2 & 0 & a & -a\\ 0&-1/2 & b&-b\\ 0 & 0& -1/2 + c&-1/2-c}
\end{equation}
for some $a,b,c\in\C$.  Since in this case $r_\br = -\epsilon_1-\epsilon_2+2\delta_1$, the proposition also yields $X' - X_0 = M_\br r_\br = (\frac12+2a, \frac12+2b, -1+2c) \in \C^{2|1}$, so that
\begin{equation}\label{gl22diff}
X' = (1/4+2a, -1/4+2b, 2c),
\end{equation}
with $a,b,c$ as in \eqref{E:Mbgl22}.

Next consider the highest weights of the form $\underline{\lambda}_0=(2r,0,0,0)$ with $r>0$.  Then we have $M_0\underline{\lambda}_0+X_0 = (r-1/4, -3/4, 1)$.  Applying Lemma~\ref{lem:equforf}, and separate symmetry in the first two coordinates, we deduce that for $r \geq 1$ this vector is monoidally equivalent only to the four vectors
$$
(r-\frac14,-\frac34,1), \quad (r-\frac14, \frac14, 0), \quad (-\frac34, r-\frac14, 1), \quad \text{and} \quad (\frac14, r-\frac14, 1).
$$
Since this orbit is finite, it again coincides with its equivalence class under $\sim_E$.
Therefore to satisfy \eqref{eq:Pmu=} for each $r\geq 1$, the vector $M'\underline{\lambda}_\br+X'$ must occur on this list.  Applying \eqref{E:Mbgl22} and \eqref{gl22diff}, we compute
$$
M'\underline{\lambda}_\br+X'= M'(-2r+1, 0, -1, 0)+X' = (r-1/4+a, -1/4+b, 1/2+c),
$$
whence we must have $a=0$ and either $c=-b=1/2$ or $c=-b=-1/2$.  These two choices (which correspond to Definitions~\ref{D:Cbfirst} and \ref{D:Cbfull}, respectively), give 
$$
X' = (1/4, -5/4, 1) \quad \text{and} \quad X'=(1/4, 3/4, -1),
$$
respectively.

Finally, to satisfy \eqref{eq:Pmu=} when $\underline{\lambda}_\br=\underline{\lambda}_0=0$, we apply Lemma~\ref{lem:orbits} to deduce that we should have $X' \in \cup_{d\in \C}\mathcal{O}(d)$, which yields the unique choice $X' =  (\frac14, \frac34, -1)$.  

We conclude that $\tau'$ is uniquely determined by \eqref{eq:Pmu=} in this case, and is given by $X'=(\frac14, \frac34, -1)$ and 
$$
M' = \mat{-1/2 & 0 & 0 & 0\\ 0&-1/2 & 1/2&-1/2\\ 0 & 0& -1&0},
$$
which corresponds with the map $\tau_\br^{\mathrm{full}}$ of Definition~\ref{D:Cbfull}. 
\end{eg}

\color{black}

\appendix

\section{A normalization for Capelli operators}

\label{appxA}

Let $W$ be a $\mathbb Z_2$-graded vector space. Then there exists a natural pairing 
\[
\langle \cdot,\cdot\rangle:
 W^{\otimes d}\times (W^*)^{\otimes d}\to\C\ ,\ 
\langle v_1\otimes\cdots\otimes v_d,v_1^*\otimes \cdots\otimes v_d^*\rangle:=
\prod_{i=1}^d\langle v_{d-i+1},v_i^*\rangle,
\]
where $\langle v,v^*\rangle:=v^*(v)$. 
This gives an isomorphism
\begin{equation}
\label{eq:VdV*}
W^{\otimes d}\xrightarrow{\cong}
\left((W^*)^{\otimes d}\right)^*.
\end{equation}
Now recall the canonical isomorphisms
\begin{equation}
\label{eq:SrVdi}
\s^d(W)\cong (W^{\otimes d})^{S_d}
\quad\text{and}\quad
\p^d(W)\cong \s^d(W^*)\cong 
\left((W^*)^{\otimes d}\right)^{S_d}
.
\end{equation}
From the maps~\eqref{eq:VdV*} and~\eqref{eq:SrVdi} we obtain a linear map
\begin{equation}
\boldsymbol{\varphi}:
\s^d(W)\to \p^d(W)^*
\label{eq:frstmp}
\end{equation} 
as the composition of the maps
\begin{equation}
\label{eq:PdinSd}
\s^d(W)\xrightarrow{\boldsymbol{i}} W^{\otimes d}\xrightarrow{\cong } \left((W^*)^{\otimes d}\right)^*\xrightarrow{\boldsymbol{p}}\p^d(W)^*, 
\end{equation}
where $\boldsymbol{i}$ is induced by the inclusion  $(W^{\otimes d})^{S_d}\subseteq
W^{\otimes d}$ and $\boldsymbol{p}$ is induced by the projection that is dual to  the  inclusion $((W^*)^{\otimes d})^{S_d}\to (W^*)^{\otimes d}$. 
It is straightforward to verify that the map
$\boldsymbol{\varphi}$
is indeed an isomorphism of $\mathfrak{gl}(W)$-modules. In particular, given $v\in W_{\bar 0}$ and $v^*\in (W^*)_{\bar 0}$, for the $d$-tensors $v^{\otimes d}\in\s^d(W)$ and $(v^*)^{\otimes d}\in \p^d(W)$ we have
\[
\langle v^{\otimes d},(v^*)^{\otimes d}\rangle=\langle v,v^*\rangle^d.
\]
We can also embed $\s(W)$ in $\mathrm{End}(\p(W))$ as follows. First for any homogeneous $v\in W\cong \s^1(W)$ we define a  superderivation $\partial_v$ on $\p(W)$ by setting
\[
\partial_v(v^*):=\langle v,v^*\rangle\quad\text{ for }v^*\in W^*\cong \p^1(W),
\]
and then extending $\partial_v$ to all of $\p(W)$ by the Leibniz rule 
\[
\partial_v(ab):=\partial_v(a)b+(-1)^{|v|\cdot |a|}a\partial_v(b). 
\]
The resulting derivations $\partial_v$ supercommute, and therefore we can extend the embedding $\partial:W\to\mathrm{End}(\p(W))$ to all of $\s(W)$ by the universal property of $\s(W)$, \emph{i.e.}, by setting 
\[
\partial_{v_1\cdots v_d}:=\partial_{v_1}\cdots \partial_{v_d}\text{ for }v_1,\ldots,v_d\in W.
\]
This map identifies $\s(W)$ with the algebra $\mathscr D(W)$ of constant-coefficient superdifferential operators on $\p(W)$.

Clearly $\partial_u(\p^d(W))\subseteq \p^{d-d'}(W)$ for $u\in \s^{d'}(W)$. In particular, when $d'=d$ we have $\p^0(W)\cong \C$, hence we obtain a linear map
\[
\boldsymbol{\varphi}':\s^d(W)\mapsto \p^d(W)^*\quad,\quad
u\mapsto\big[
p\mapsto \partial_up\big].
\] 
\begin{lem}
\label{lem:appx}
$\boldsymbol{\varphi}'=d!\boldsymbol{\varphi}$.
\end{lem}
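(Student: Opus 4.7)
The plan is to verify the identity $\boldsymbol{\varphi}' = d!\boldsymbol{\varphi}$ by checking it on a spanning set.  Since both sides are linear in the input $u\in \s^d(W)$ and both produce linear functionals on $\p^d(W)$, it suffices to prove the equality
$$
\boldsymbol{\varphi}'(u)(p)=d!\,\boldsymbol{\varphi}(u)(p)
$$
when $u=v_1\cdots v_d$ and $p=v_1^*\cdots v_d^*$ are monomials in homogeneous factors $v_i\in W$ and $v_j^*\in W^*$.  So I would begin the proof by reducing to this case.

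Next, I would compute the left-hand side directly from the definitions.  Using the factorization $\partial_{v_1\cdots v_d}=\partial_{v_1}\cdots\partial_{v_d}$ and iterating the super-Leibniz rule, each application of a $\partial_{v_k}$ distributes over the factors of $v_1^*\cdots v_d^*$ with appropriate Koszul signs.  After $d$ iterations, all $d!$ possible pairings between the $v_i$ and the $v_j^*$ appear, and the result takes the form
$$
\partial_{v_1}\cdots\partial_{v_d}(v_1^*\cdots v_d^*)=\sum_{\sigma\in S_d}\epsilon(\sigma)\prod_{i=1}^d\langle v_i,v_{\sigma(i)}^*\rangle
$$
for explicit Koszul signs $\epsilon(\sigma)$.

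For the right-hand side I would unwind the definition of $\boldsymbol{\varphi}$ to get $\boldsymbol{\varphi}(u)(p)=\langle \boldsymbol{i}(u),p\rangle$ where the pairing is the one from~\eqref{eq:VdV*} and $p$ is viewed in $(W^*)^{\otimes d}$ via the analogous inclusion.  The essential ingredient here is to identify $\boldsymbol{i}(v_1\cdots v_d)\in W^{\otimes d}$ as the normalized supersymmetrization $\tfrac{1}{d!}\sum_\sigma \epsilon(\sigma)\,v_{\sigma(1)}\otimes\cdots\otimes v_{\sigma(d)}$; the $1/d!$ normalization is pinned down by the identity $\langle v^{\otimes d},(v^*)^{\otimes d}\rangle=\langle v,v^*\rangle^d$ already recorded in the excerpt, since an even pure power $v^d$ must map to $v^{\otimes d}$ rather than $d!\,v^{\otimes d}$.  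Treating $p$ analogously and using the $S_d$-invariance of one of the two supersymmetrizations to collapse the resulting double sum to a single sum, I would obtain
$$
\boldsymbol{\varphi}(v_1\cdots v_d)(v_1^*\cdots v_d^*)=\frac{1}{d!}\sum_{\sigma\in S_d}\epsilon(\sigma)\prod_{i=1}^d\langle v_i,v_{\sigma(i)}^*\rangle,
$$
which is precisely $1/d!$ times the expression for $\boldsymbol{\varphi}'(u)(p)$, yielding the lemma.

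The main obstacle I anticipate is the careful tracking of Koszul signs in the two independent computations and verifying that they match term by term across all $\sigma\in S_d$.  The reverse ordering built into the pairing~\eqref{eq:VdV*} is clearly engineered to align these two sources of signs, so the difficulty is bookkeeping rather than conceptual, but an inductive sign-chasing argument (e.g.\ on $d$, peeling off one derivation at a time and comparing it with moving one factor to the correct supersymmetrization position) will be required to make the match rigorous.
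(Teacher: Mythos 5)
Your proposal is correct, but it takes a genuinely different route from the paper. The paper's argument is representation-theoretic: it first observes that both $\boldsymbol{\varphi}$ and $\boldsymbol{\varphi}'$ are $\gl(W)$-module isomorphisms and that $\s^d(W)$ is an irreducible $\gl(W)$-module, so by Schur's lemma the two maps are proportional and it suffices to compare them on a \emph{single} nonzero vector; the paper then takes $v^{\otimes d}$ with $v\in W_{\bar 0}$, $v^*\in W^*_{\bar 0}$, and the one-line computation $\partial_{v^{\otimes d}}((v^*)^{\otimes d})=d!\,\langle v,v^*\rangle^d$ together with $\boldsymbol{\varphi}(v^{\otimes d})((v^*)^{\otimes d})=\langle v,v^*\rangle^d$ finishes the proof with no sign bookkeeping at all. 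You instead verify the identity on all monomials $v_1\cdots v_d$ and $v_1^*\cdots v_d^*$ by expanding both sides as signed sums over $S_d$; this is more elementary (it does not use irreducibility of $\s^d(W)$ or equivariance of the two maps, which the paper itself leaves unproved) but it forces you to carry out the Koszul sign matching that the paper's reduction is designed to avoid. Your identification of $\boldsymbol{i}(v_1\cdots v_d)$ with the $\tfrac{1}{d!}$-normalized supersymmetrization, pinned down by the stated identity $\langle v^{\otimes d},(v^*)^{\otimes d}\rangle=\langle v,v^*\rangle^d$, is exactly the right normalization check, and the collapse of the double sum via $S_d$-invariance is sound; the one piece you explicitly defer --- the term-by-term matching of the Koszul signs $\epsilon(\sigma)$ arising from the Leibniz expansion against those from the supersymmetrization and the reversed pairing in~\eqref{eq:VdV*} --- is routine but is precisely the bookkeeping the paper's Schur-type argument buys its way out of, at the cost of having to know both maps are $\gl(W)$-equivariant.
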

\begin{proof}
The proof of Lemma~\ref{lem:appx} is an exercise in super linear algebra, and we omit the details. 
Here we only outline one proof strategy. First one proves that both  $\boldsymbol{\varphi}$ and $\boldsymbol{\varphi}'$ are $\mathfrak{gl}(W)$-module isomorphisms and $\s^d(W)$ is an irreducible $\mathfrak{gl}(W)$-module. Thus, it suffices to compare the values of $\boldsymbol{\varphi}$ and $\boldsymbol{\varphi}'$ on one nonzero vector. For instance, if $W_{\bar 0}\neq0$ then for nonzero $v\in W_{\bar 0}$ and $v^*\in W^*_{\bar 0}$ we have 
\[
\partial_v((v^*)^{\otimes d})=d\langle v,v^*\rangle (v^*)^{\otimes (d-1)},
\]
hence by induction we obtain
$
\partial_{v^{\otimes d}}((v^*)^{\otimes d})=
d!\langle v,v^*\rangle ^d$,
which completes the proof. 
\end{proof}
Lemma~\ref{lem:appx} can be used to fix a normalization for the Capelli operator 
$D^\lambda$ in Definition~\ref{defn:capelli_operator}
as follows. Recall the $\mathfrak g$-module decomposition 
\[
\p(W)\cong \bigoplus_{\lambda\in \Omega}W_\lambda.
\]
The map $\boldsymbol{\varphi}$ results in an isomorphism 
$
\s(W)\cong\bigoplus_{\lambda\in\Omega}W_\lambda^*$.
For $\lambda\in\Omega$, the  Capelli operator $D^\lambda$ is constructed as follows. We choose a basis $w_1^*,\ldots,w^*_{d_\lambda}$ for $W_\lambda$, and a dual basis $w_1,\ldots,w_{d_\lambda}$ for $W_\lambda^*$. From the definition of $D^\lambda$ it is straightforward to check that $D^\lambda=\sum_{i=1}^{d_\lambda} w_i^*\partial_{w_i}$.
In particular, if $W_\lambda\subseteq \p^d(W)$ then for $1\leq j\leq d_\lambda$ we have
\[
D^\lambda w_j^*=\sum_{i=1}^{d_\lambda} w_i^*\partial_{w_i}w_j^*
=d!
\sum_{i=1}^{d_\lambda}w_i^*\langle w_i,w_j^*\rangle =d! w_j^*.
\]
Consequently, $D^\lambda$ acts on $W_\lambda$ by the scalar $d!$.


\end{document}